\documentclass[12pt,reqno]{amsart}
\pdfoutput=1
\usepackage{latexsym, amsmath, amssymb, amsthm, bm}
\usepackage{rsfso}
\usepackage[mathscr]{eucal}
\usepackage{mathtools}
\usepackage{paralist}
\usepackage[alphabetic]{amsrefs}
\usepackage[T1]{fontenc}
\usepackage{mathptmx}
\usepackage{microtype}
\usepackage[colorlinks=true, linkcolor=blue, urlcolor=blue, citecolor=blue]%
    {hyperref}
\linespread{1.06}
\usepackage[centering, includeheadfoot, hmargin=1.0in, tmargin=1.0in, 
  bmargin=1in, headheight=6pt]{geometry}
\newtheorem{lemma}{Lemma}[section]
\newtheorem{theorem}[lemma]{Theorem}
\newtheorem{corollary}[lemma]{Corollary}
\newtheorem{proposition}[lemma]{Proposition}
\theoremstyle{definition}
\newtheorem{definition}[lemma]{Definition}
\newtheorem{remark}[lemma]{Remark}

\newenvironment{example}
  {\pushQED{\qed}\examplex}
  {\popQED\endexamplex}
\newcommand{\define}[1]{{\bfseries\itshape #1}}
  
\newcommand{\abs}[1]{\ensuremath{\left| #1 \right|}}
\newcommand{\ideal}[1]{\ensuremath{\left\langle #1 \right\rangle}}
\newcommand{\transpose}{\ensuremath{\textsf{\upshape T}}} 
\DeclareMathOperator{\codim}{codim}
\DeclareMathOperator{\conv}{conv}
\DeclareMathOperator{\green}{\textit{a}}
\DeclareMathOperator{\Hom}{Hom}
\DeclareMathOperator{\kept}{\lambda}
\DeclareMathOperator{\len}{\ell}
\DeclareMathOperator{\py}{py}
\DeclareMathOperator{\qp}{qp}
\DeclareMathOperator{\qr}{qr}
\DeclareMathOperator{\Sos}{\Sigma}
\DeclareMathOperator{\Span}{Span}
\DeclareMathOperator{\Tor}{Tor}
\begin{document}

\vspace*{-0.75em}

\title[Sums of Squares and Quadratic Persistence]%
  {Sums of Squares and Quadratic Persistence\\ on Real Projective Varieties}

\author[G.~Blekherman]{Grigoriy Blekherman}
\address{Grigoriy Blekherman: School of Mathematics, Georgia Tech, 686 Cherry
  Street, Atlanta, Georgia, 30332, United States of America;
  {\normalfont \texttt{greg@math.gatech.edu}}}

\author[R.~Sinn]{Rainer Sinn}
\address{Rainer Sinn: Institut f\"ur Mathematik, Arnimallee 2, Freie
  Universit\"at Berlin, 14195 Berlin, Germany; {\normalfont
    \texttt{rsinn@zedat.fu-berlin.de}}}

\author[G.G.~Smith]{Gregory G.{} Smith}
\address{Gregory G.{} Smith: Department of Mathematics and Statistics, Queen's
  University, Kingston, Ontario, K7L 3N6, Canada; {\normalfont
    \texttt{ggsmith@mast.queensu.ca}}}

\author[M.~Velasco]{Mauricio Velasco} 
\address{Mauricio Velasco: Departamento de Matem\'aticas, Universidad de los
  Andes, Carrera 1 No. 18a 10, Edificio~H, Primer Piso, 111711 Bogot\'a,
  Colombia; {\normalfont \texttt{mvelasco@uniandes.edu.co}}}

\subjclass[2010]{14P05; 52A99, 13D02}
\keywords{convex algebraic geometry, sums of squares, Pythagoras number,
  linear syzygies}


\begin{abstract}
  We bound the Pythagoras number of a real projective subvariety: the smallest
  positive integer $r$ such that every sum of squares of linear forms in its
  homogeneous coordinate ring is a sum of at most $r$ squares.  Enhancing
  existing methods, we exhibit three distinct upper bounds involving known
  invariants.  In contrast, our lower bound depends on a new invariant of a
  projective subvariety called the quadratic persistence.  Defined by
  projecting away from points, this numerical invariant is closely related to
  the linear syzygies of the variety.  In addition, we classify the projective
  subvarieties of maximal and almost-maximal quadratic persistence, and
  determine their Pythagoras numbers.
\end{abstract}

\maketitle

\addtocounter{section}{1}

\vspace*{-1.0em}

\noindent
Sums of squares occupy a central place in real algebraic geometry,
optimization, and number theory.  Being able to represent an element in a
commutative ring as a sum of squares has substantial ramifications in the
study of non-negativity and quadratic forms, whereas the constructive aspects
of these representations are indispensable in developing efficient
computational tools.  The Pfister Theorem \cite{Lam}*{Corollary~XI.4.11},
proving that any nonnegative rational function in the field
$\mathbb{R}(x_0, x_1, \dotsc, x_n)$ is a sum of at most $2^{n+1}$ squares,
serves as a motivational example.  Our primary objective is to find similar
effective bounds for homogeneous elements in a real affine algebra and our
approach exposes some unexpected connections between real and complex
geometry.

Given a real subvariety $X \subseteq \mathbb{P}^n$, let $I_X$ be its saturated
homogeneous ideal in the polynomial ring
$S \coloneqq \mathbb{R}[x_0, x_1, \dotsc, x_n]$ and let $R \coloneqq S/I_X$
denote its homogeneous coordinate ring.  Inspired by
\cite{Lam}*{Section~XIII.5}, the \define{Pythagoras number} $\py(X)$ is the
smallest positive integer $r$ such that any sum of squares of linear forms in
$R$ can be expressed as the sum of at most $r$ squares.  We focus on
homogeneous polynomials of degree $2$ because the appropriate Veronese
re-embedding of $X$ reduces the analysis to this case.  Refining and
consolidating existing methods, our first theorem provides three different
upper bounds on the Pythagoras number for real projective subvarieties.  To
articulate this result, we set $\green(X)$ to be the largest number $k$ such
that the homogeneous ideal $I_X$ is generated by quadratic polynomials and the
first $k-1$ maps in its minimal free resolution are represented by matrices of
linear forms, and we refer to a projective subvariety
$X' \subseteq \mathbb{P}^n$ as $2$\nobreakdash-regular if its homogeneous
ideal $I_{X'}$ is generated by quadratic polynomials and all the maps in its
minimal free resolution are represented by matrices of linear forms; see
\cite{Eis}*{Sections 4A and 8D}.

\begin{theorem}
  \label{t:1}
  For any real subvariety $X \subseteq \mathbb{P}^n$ such that the set
  $X(\mathbb{R})$ of real points is not contained in a hyperplane, we have the
  following upper bounds:
  \begin{compactenum}[\upshape i.]
  \item $\binom{\py(X) +1}{2} < \dim_{\mathbb{R}} R_2$;
  \item $\py(X) \leqslant n + 1 - \min \{ \green(X), \codim(X) \}$;
  \item $\py(X)$ is at most one more than the dimension of any real
    $2$\nobreakdash-regular variety containing $X$.
  \end{compactenum}
\end{theorem}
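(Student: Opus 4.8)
The key observation underpinning all three bounds is a reformulation of $\py(X)$ as an extremal rank problem over a spectrahedron. Because $X(\mathbb{R})$ is not contained in a hyperplane we have $(I_X)_1 = 0$, so $R_1 = S_1$, and a sum of squares of linear forms in $R$ is exactly the image of a positive semidefinite quadratic form $\tilde\sigma \in S_2$; moreover the fewest squares needed to write $\sigma \in R_2$ equals the least rank of a positive semidefinite element of the affine space $\tilde\sigma + (I_X)_2 \subseteq S_2$. Hence
\[
  \py(X) \;=\; \max\Bigl\{\, \mathrm{rank}_{\min}\bigl(\tilde\sigma + (I_X)_2\bigr) \;:\; \tilde\sigma \in S_2,\ \tilde\sigma \succeq 0 \,\Bigr\},
\]
where $\mathrm{rank}_{\min}$ denotes the least rank among positive semidefinite elements of a coset. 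I would study, for a fixed positive semidefinite $\tilde\sigma$, a positive semidefinite matrix $M$ of minimal rank in its coset, through the geometry of its kernel $W := \ker M$ and of the quadrics in $I_X$.

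For (i): if $M$ has rank $r$ and is of minimal rank in its coset, then any nonzero $q \in (I_X)_2$ whose radical contains $W$ produces a pencil $M + tq$ that stays positive semidefinite on a proper interval about $0$ (it descends to a nonzero form on $\mathbb{R}^{n+1}/W$, so the interval cannot be all of $\mathbb{R}$), and at a finite endpoint its rank drops while it remains in the coset --- contradicting minimality of $r$. Thus $(I_X)_2$ meets the space of quadratic forms with radical containing $W$ only in $0$; since that space has dimension $\binom{r+1}{2}$, comparing dimensions gives $\binom{r+1}{2} \le \dim S_2 - \dim (I_X)_2 = \dim R_2$, and maximizing over $\tilde\sigma$ yields $\binom{\py(X)+1}{2} \le \dim R_2$. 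To get the strict inequality I would exclude the equality case: equality would force the cone of sums of squares in $R_2$ to be linearly isomorphic to a positive semidefinite cone in $\mathrm{Sym}^2(\mathbb{R}^{r})$ and would identify $R_2$ with the span of the products $\ell_i\ell_j$ of $r$ fixed independent linear forms; a linear form outside $\langle \ell_1,\dots,\ell_r\rangle$ --- available since $\dim R_1 = n+1$ --- can then be used to build a length-$r$ sum of squares that is actually a sum of $r-1$ squares, a contradiction.

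The heart of the paper is (ii). Set $c := \min\{\green(X),\codim(X)\}$ and argue by induction on $c$, the case $c = 0$ being vacuous. It suffices to show that any coset $\tilde\sigma + (I_X)_2$ containing a positive semidefinite matrix contains one of corank at least $c$. The point is that the hypothesis $\green(X) \ge c$ --- linearity of the first $c-1$ maps in the minimal resolution of $I_X$ --- yields, by successively differentiating a linear syzygy and restricting to $X$, a supply of nonzero quadrics in $I_X$ singular along prescribed linear subspaces, while $\codim(X) \ge c$ lets those subspaces be chosen generic with respect to $X$. I would fix a general flag $0 = W_0 \subset W_1 \subset \dots \subset W_c$ with $\dim W_i = i$ and construct positive semidefinite matrices $M_0 = \tilde\sigma, M_1, \dots, M_c$ in the coset with $W_i \subseteq \ker M_i$: given $M_{i-1}$ with $W_{i-1}\subseteq\ker M_{i-1}$, a quadric in $I_X$ singular along $W_{i-1}$ keeps $W_{i-1}$ in the kernel along the pencil through $M_{i-1}$, and passing to the endpoint of the positive-semidefinite interval of that pencil produces $M_i$ with strictly larger kernel; the flag together with the syzygy-produced quadrics are what pin the new kernel direction down to $W_i$. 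Then $M := M_c$ has rank at most $n+1-c$, which is the bound.

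The main obstacle is exactly this last point: an unstructured pencil argument lowers the rank by one but gives no control over where the new kernel direction lands, so the delicate work is the simultaneous management of positive semidefiniteness and of the prescribed growth of the kernel --- precisely the role of the linear syzygy hypothesis --- and making it rigorous (identifying which quadrics arise from the resolution, and arranging the flag so the pencil endpoints behave as claimed) is where most of the proof lies. Granting (ii), statement (iii) follows quickly: a real $2$-regular variety $X' \supseteq X$ has a linear minimal free resolution, so $\green(X') \ge \codim(X')$ and hence $\min\{\green(X'),\codim(X')\} = \codim(X')$, whence (ii) gives $\py(X') \le n+1-\codim(X') = \dim X' + 1$. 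Finally, since $I_{X'} \subseteq I_X$, any sum of squares of linear forms in $R$ lifts --- using the very same linear forms --- to a sum of squares in the coordinate ring of $X'$; rewriting it there with at most $\dim X' + 1$ squares and mapping back to $R$ exhibits the original element as a sum of at most $\dim X' + 1$ squares.
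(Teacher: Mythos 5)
Your reduction of $\py(X)$ to a minimal-rank problem over Gram spectrahedra is correct and is also the paper's starting point for part (i), and your pencil/kernel argument does yield the non-strict inequality $\binom{\py(X)+1}{2}\leqslant\dim_{\mathbb{R}}R_2$ (it is essentially the proof of Barvinok's Proposition~II.13.1, which the paper cites). But there are two genuine gaps. The larger one is part (ii), and you flag it yourself: the entire content of the bound is the step you defer, namely forcing the kernel vector gained at the endpoint of the pencil $M_{i-1}+tq$ to land in a prescribed flag. Nothing in your sketch controls this. A quadric $q\in(I_X)_2$ singular along $W_{i-1}$ only guarantees $W_{i-1}\subseteq\ker(M_{i-1}+tq)$ for all $t$, not that the extra direction acquired at the boundary of the positive-semidefinite interval is the prescribed one; moreover the subspaces $W_{i-1}$ are kernels of Gram matrices, not spans of points of $X$, so the hypothesis $\green(X)\geqslant c$ does not directly supply nonzero quadrics of $I_X$ singular along them. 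The paper's route is entirely different: Theorem~\ref{t:series} proves, by a differential-topological argument about the squaring map $\varsigma_k\colon\bigoplus_{i=1}^{k}R_1\to R_2$, its branch locus, and the connectivity of $\Sos_X$ minus the discriminant, that $\py(X)\leqslant k$ whenever every basepoint-free $k$-dimensional linear series generates $R_2$; Theorem~6 of \cite{BSV17} then converts the hypothesis on $\green(X)$ into exactly that surjectivity statement. Without either that input or a rigorous version of your descent, part (ii) is unproven, and part (iii) --- which you correctly reduce to (ii) applied to the $2$-regular variety together with the lifting argument, exactly as the paper does --- falls with it.

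The second gap is the equality case of (i). Exhibiting one particular sum of $r$ squares that happens to be a sum of $r-1$ squares contradicts nothing: $\py(X)=r$ only asserts that \emph{some} element of $\Sos_X$ needs $r$ squares. It is also unclear why equality of dimensions would identify $\Sos_X$ with the image of the positive-semidefinite forms supported on $V/W$, since $\Sos_X$ is the image of all of $\mathscr{S}_+$, not of its slice by the forms with radical containing $W$. The paper needs two separate nontrivial inputs here: the Macaulay characterization of Hilbert functions to dispose of the degenerate case $\dim_{\mathbb{R}}R_2=1$, and Barvinok's Proposition~II.13.4 (the sharp equality case of the rank bound) for the rest, the latter requiring $\py(X)<n+1$, which is where the hypothesis $(I_X)_2\neq 0$ of Theorem~\ref{t:convex} enters and which your write-up does not address.
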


\noindent
Together Example~\ref{e:genCan}, Example~\ref{e:PP2Up}, and
Example~\ref{e:Peter} illustrate that any one of these upper bounds can be
stronger than the other two.

More significantly, we devise a lower bound for the Pythagoras number of a
real subvariety.  The key is to introduce a new numerical invariant for a
complex projective subvariety.  For any nonnegative integer $k$ and any subset
$\Gamma$ of $k$ closed points in $X$, let
$\pi_{\Gamma} \colon \mathbb{P}^n \dashrightarrow \mathbb{P}^{n-k}$ be the
rational map given by the linear projection away from $\Gamma$.  The
\define{quadratic persistence} $\qp(X)$ of the subvariety
$X \subseteq \mathbb{P}^n$ is the smallest nonnegative integer $k$ for which
there exists a subset $\Gamma$ of $k$ closed points in $X$ such that the
homogeneous ideal $I_{\pi_{\Gamma}(X)}$ contains no quadratic polynomials.
Lemma~\ref{l:qpGen} shows that the quadratic persistence of an irreducible
variety may be calculated by projecting away from a general set of closed
points and Lemma~\ref{l:qpBasic} establishes the basic inequality
$\qp(X) \leqslant \codim(X)$.  On the other hand, quadratic persistence is
also intimately related to linear syzygies.  To state our second major result,
let $\len(X)$ be the number of nonzero entries in the first row of the Betti
table for the homogeneous coordinate ring $R$ regarded as an
$S$\nobreakdash-module; see \eqref{d:len} or \cite{Eis}*{Section~8D}.

\begin{theorem}
  \label{t:2}
  For a non-degenerate irreducible complex subvariety
  $X \subseteq \mathbb{P}^n$, we have $\qp(X) \geqslant \len(X)$.
\end{theorem}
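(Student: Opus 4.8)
The plan is to reduce everything to the subspace $Q := (I_X)_2$ of quadratic forms in $\operatorname{Sym}^2 V$, where $V := S_1$, and then to track the length of the linear strand under a single projection. If $\Gamma$ is a set of $k$ points spanning a linear space $\mathbb{P}^{k-1}$, then choosing coordinates so that this span is $\{x_k = \dotsb = x_n = 0\}$ and setting $V' := \langle x_k, \dotsc, x_n \rangle$, a quadratic form in $x_k, \dotsc, x_n$ vanishes on $\pi_\Gamma(X)$ exactly when it vanishes on $X$; hence $(I_{\pi_\Gamma(X)})_2 = Q \cap \operatorname{Sym}^2 V'$, so ``$I_{\pi_\Gamma(X)}$ contains no quadratic polynomial'' means precisely $Q \cap \operatorname{Sym}^2 V' = 0$. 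Moreover each group $\Tor^S_p(R, \mathbb{C})_{p+1}$ — the homology at the middle of $\wedge^{p+1} V \to \wedge^p V \otimes R_1 \to \wedge^{p-1} V \otimes R_2$, recording the $p$-th entry of the first row of the Betti table, and hence $\len(X)$ — depends only on the pair $(V, Q)$. So it suffices to bound the number of $p \geq 1$ with $\Tor^S_p(R, \mathbb{C})_{p+1} \neq 0$.

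The crux will be the following claim: \emph{if $X \subseteq \mathbb{P}^n$ is irreducible and non-degenerate and $p \in X$ is a general point, then $\len(\pi_p(X)) \geq \len(X) - 1$.} Granting this, here is how the theorem would follow. By Lemma~\ref{l:qpGen} one may pick a general $\Gamma = \{p_1, \dotsc, p_{\qp(X)}\}$ realizing the quadratic persistence, ordered so that each $p_{i+1}$ is general given $p_1, \dotsc, p_i$; then every iterated image $X_i := \pi_{\{p_1, \dotsc, p_i\}}(X)$ is again irreducible and non-degenerate — it is the image of such a variety under projection from a point of it — and the next center, the image of $p_{i+1}$, is a general point of $X_i$. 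Applying the claim at each step gives $\len(X_i) \geq \len(X_{i-1}) - 1$, hence $\len(\pi_\Gamma(X)) \geq \len(X) - \qp(X)$. Since $I_{\pi_\Gamma(X)}$ has no quadratic polynomials, $\len(\pi_\Gamma(X)) = 0$, and therefore $\qp(X) \geq \len(X)$.

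To prove the claim I would put $p = [1:0:\dotsb:0]$ and compare the minimal free resolution of $R$ over $S$ with that of the homogeneous coordinate ring $R'$ of $Y := \pi_p(X)$ over $S' := \mathbb{R}[x_1, \dotsc, x_n]$; through degree two, $R'$ coincides with $S'/(I_X \cap S')$, which is all the linear strand sees. The inclusion $S' \hookrightarrow S$ induces a graded ring homomorphism $R' \to R$, and a change-of-rings (mapping-cone) spectral sequence for the polynomial extension $S = S'[x_0]$ — equivalently, an analysis via Green's partial elimination ideals for $x_0$ — should yield, for each $q \geq 2$, an exact sequence linking $\Tor^S_q(R, \mathbb{C})_{q+1}$ to $\Tor^{S'}_{q-1}(R', \mathbb{C})_q$ together with correction terms built from the failure of $R' \to R$ to be an isomorphism and from multiplication by $x_0$. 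Irreducibility of $X$ is what should make those correction terms vanish in the relevant internal degree — in particular it guarantees that for general $p$ the projection $\pi_p$ is birational onto its image, the exceptional behaviour occurring only when $X$ is a linear space, where $\len(X) = 0$ and there is nothing to prove — thereby giving the implication $\Tor^S_q(R, \mathbb{C})_{q+1} \neq 0 \Rightarrow \Tor^{S'}_{q-1}(R', \mathbb{C})_q \neq 0$ and hence the claim. \textbf{The main obstacle is exactly this comparison of resolutions:} the coordinate ring of a projection is neither a hyperplane section of $R$ nor a convenient quotient of it, so suppressing the extra homology that a priori contaminates the linear strand will require combining the partial-elimination description of $I_Y$ with the geometric (birationality) input carefully; I expect this, rather than the bookkeeping around $(V, Q)$ or the induction, to be where the real work lies.
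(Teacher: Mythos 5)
Your reduction is exactly the one the paper uses: the whole theorem rests on the single-projection claim that $\len\bigl(\pi_{\{p\}}(X)\bigr) \geqslant \len(X) - 1$ for a general point $p \in X$ (this is Proposition~\ref{p:linPart} with $k=1$), followed by iteration over a general set $\Gamma$ realizing $\qp(X)$. Your bookkeeping around $(V,Q)$, the identification $(I_{\pi_\Gamma(X)})_2 = Q \cap \operatorname{Sym}^2 V'$, and the iteration are all sound (the paper handles the last projection slightly differently, proving directly that $\qp = 1$ forces $\len = 1$ via transversality of the quadrics at a general point, rather than invoking $\len = 0$ for a variety with no quadrics, but that difference is cosmetic).

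The genuine gap is that the claim itself is never proved: you write that a change-of-rings spectral sequence for $S = S'[x_0]$, or Green's partial elimination ideals, ``should yield'' the comparison, and you yourself flag the suppression of the correction terms as the main obstacle. That obstacle is the entire content of Section~\ref{s:res} of the paper. The difficulty is precisely that $(I_{\pi_{\{p\}}(X)})_2$ is not $(I_X)_2$ but only the subspace of quadrics singular at $p$, so the linear strand downstairs is built from a proper subspace of $Q$, and a naive restriction-of-scalars comparison of $\Tor^S$ with $\Tor^{S'}$ does not see this. The paper resolves it via the Bernstein--Gelfand--Gelfand correspondence: the linear strand of $R$ corresponds to an exterior module $U_X^{\ast}$, multiplication by the element $v \in E_{-1}$ dual to $p$ is nonzero on every nonzero graded piece for general $p$ (by non-degeneracy), and a product-rule computation (Lemma~\ref{l:exterior}\,\emph{ii}) shows that $v\,U_X^{\ast}$ lands inside the submodule $(U_X^{\mathrm{sg}})^{\ast}$ generated by the quadrics singular at $p$ --- which, killed by $v$, computes the linear strand of the projection over $S'$. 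Note also that the geometric input is not birationality of $\pi_{\{p\}}$, as you suggest, but only non-degeneracy of $X$ and generality of $p$. Without an actual argument for the claim, the proposal is a correct outline of the paper's strategy rather than a proof.
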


\noindent
By replacing codimension with quadratic persistence, this theorem sharpens the
first part of Green's
$K_{p,1}$\nobreakdash-Theorem~\cite{Gre}*{Theorem~3.c.1}.  Even better, we use
quadratic persistence to calculate $\ell(X)$ in some situations; see
Proposition~\ref{p:qpEq} and Proposition~\ref{p:prism}.  Fulfilling our
original motivation for introducing quadratic persistence, our third theorem
gives a lower bound on the Pythagoras number of a real projective variety that
does not lie in a hyperplane and contains a nonsingular real point.

\begin{theorem}
  \label{t:3}
  For any non-degenerate irreducible totally-real subvariety $X \subseteq
  \mathbb{P}^n$, we have
  \[
    \py(X) \geqslant n + 1 - \qp(X) \geqslant 1 + \dim(X) \, .
  \]
\end{theorem}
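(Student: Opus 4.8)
The plan is to show that the Pythagoras number cannot decrease when one projects $X$ away from suitable closed points lying on it, and then to apply this to a projection whose homogeneous ideal contains no quadrics. The second inequality is immediate: Lemma~\ref{l:qpBasic} gives $\qp(X) \leqslant \codim(X) = n - \dim(X)$, so $n + 1 - \qp(X) \geqslant 1 + \dim(X)$. Thus everything rests on the first inequality $\py(X) \geqslant n + 1 - \qp(X)$.

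Set $k \coloneqq \qp(X)$. By Lemma~\ref{l:qpGen}, projecting away from a general set of $k$ closed points of $X$ yields a variety whose ideal contains no quadric; since $X$ is totally-real its real locus is Zariski dense, and since the conditions involved are Zariski open, I can choose $\Gamma = \{p_1, \dotsc, p_k\}$ to consist of $k$ general \emph{real} points of $X$. An elementary induction using non-degeneracy shows that such a $\Gamma$ is in linearly general position, so the space $V \subseteq S_1$ of linear forms vanishing on $\Gamma$ has dimension $n + 1 - k$ and $\pi_{\Gamma}$ maps $\mathbb{P}^n$ onto $\mathbb{P}^{n-k}$. Pulling back along $\pi_{\Gamma}$ identifies the degree-$2$ part of $I_{\pi_{\Gamma}(X)}$ with $\operatorname{Sym}^2 V \cap (I_X)_2$ inside $\operatorname{Sym}^2 S_1 \cong S_2$, so the defining property of $\Gamma$ says precisely that $\operatorname{Sym}^2 V \cap (I_X)_2 = 0$. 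Since $X$ is non-degenerate no linear form vanishes on $X$, so $Y \coloneqq \pi_{\Gamma}(X)$ is non-degenerate in $\mathbb{P}^{n-k}$; hence its coordinate ring $R'$ satisfies $R'_2 = \operatorname{Sym}^2 R'_1$ with $\dim_{\mathbb{R}} R'_1 = n + 1 - k$, so a sum of squares of linear forms in $R'$ is nothing but a positive semidefinite quadratic form in $n + 1 - k$ variables, with the least number of squares equal to its rank.

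Now fix a positive definite quadratic form $f'$ in $n + 1 - k$ variables, and let $f \coloneqq \pi_{\Gamma}^{*}(f')$, regarded as an element of $\operatorname{Sym}^2 V \subseteq S_2$; its image $\bar{f} \in R_2$ is visibly a sum of $n + 1 - k$ squares of linear forms, and it suffices to show that $\bar{f}$ cannot be written with fewer squares. Suppose $\bar{f} = \sum_{j=1}^{t} \bar{m}_j^{2}$ with $m_j \in S_1$. Then $f - \sum_{j=1}^{t} m_j^{2} \in (I_X)_2$, and comparing Gram matrices in $\operatorname{Sym}^2 S_1$ gives $A = B + H$, where $A$ is the Gram matrix of $f$, the matrix $B$ is positive semidefinite of rank at most $t$, and $H \in (I_X)_2$. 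Choose coordinates so that $p_1, \dotsc, p_k$ are coordinate points and $V$ is spanned by the remaining coordinates; then $A$ is supported on the block indexed by $V$ and, because $f'$ is positive definite, has rank $n + 1 - k$, while for each $i$ the quadric $H$ vanishes at $p_i \in X$, so its diagonal entry in the position of $p_i$ is zero. Consequently $B = A - H$ also has zero diagonal entries in those positions, and a positive semidefinite matrix with a zero diagonal entry has the entire corresponding row and column equal to zero; hence $B$, and therefore $H = A - B$, lies in $\operatorname{Sym}^2 V$. Since $\operatorname{Sym}^2 V \cap (I_X)_2 = 0$ we get $H = 0$ and $A = B$, and comparing ranks yields $t \geqslant n + 1 - k$, as required.

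The main obstacle is the first step, namely arranging that $k$ \emph{real} points of $X$ can be picked in linearly general position while also realizing the quadratic-persistence property; this draws on Lemma~\ref{l:qpGen}, on Zariski density of $X(\mathbb{R})$, and on the openness of the relevant loci, so that a dense Zariski-open subset of $X^k$ meets the real locus. I would also take care to state cleanly the identification of $(I_{\pi_{\Gamma}(X)})_2$ with $\operatorname{Sym}^2 V \cap (I_X)_2$, since this is exactly what couples quadratic persistence to the vanishing-diagonal argument for positive semidefinite matrices; that argument, together with the remaining Gram-matrix bookkeeping, is routine once the setup is in place.
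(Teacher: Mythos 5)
Your proposal is correct and follows essentially the same route as the paper: it restricts to the face of $\Sos_X$ consisting of forms vanishing at $k = \qp(X)$ real points of $X$ (available by total reality and the openness in Lemma~\ref{l:qpGen}), identifies that face with the sums-of-squares cone of the inner projection, and uses that this cone is the full positive-semidefinite cone in $n+1-k$ variables once no quadrics remain in the ideal. Your zero-diagonal Gram-matrix argument is just the explicit matrix form of the paper's Lemma~\ref{l:face} (a sum of squares vanishing at a real point forces each summand to vanish there), with a positive definite pullback serving as the witness in place of the citation $\py(\mathbb{P}^{n-k}) = n-k+1$.
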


\noindent
Although the Pythagoras number is a semi-algebraic invariant relying on the
real structure, the lower bounds are algebraic invariants depending only on
the complex geometry of the subvariety.

Counterintuitively, our upper and lower bounds on the Pythagoras number agree
when the quadratic persistence is relatively large.  In the maximal case, our
fourth theorem strengthens Theorem~1.1 in \cite{BPSV} and yields yet another
characterization for varieties of minimal degree.

\begin{theorem}
  \label{t:4}
  For any non-degenerate irreducible totally-real subvariety
  $X \subseteq \mathbb{P}^n$, the following conditions are equivalent:
  \begin{compactenum}[\upshape a.]
  \item $\qp(X) = \codim(X)$;
  \item $\py(X) = 1 + \dim(X)$;
  \item $\deg(X) = 1 + \codim(X)$.
  \end{compactenum}
\end{theorem}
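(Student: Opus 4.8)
\emph{Plan.} I would prove the equivalence by the cyclic chain (b) $\Rightarrow$ (a) $\Rightarrow$ (c) $\Rightarrow$ (b), with the geometric content concentrated entirely in (a) $\Rightarrow$ (c). For (b) $\Rightarrow$ (a): if $\py(X) = 1 + \dim(X)$, then the two inequalities of Theorem~\ref{t:3} read $1 + \dim(X) = \py(X) \geqslant n + 1 - \qp(X) \geqslant 1 + \dim(X)$, so both are equalities; in particular $\qp(X) = n - \dim(X) = \codim(X)$. This link is immediate.

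For (c) $\Rightarrow$ (b): condition (c) says exactly that $X$ is a variety of minimal degree, and every such variety is $2$-regular (the Del Pezzo--Bertini classification together with the Eagon--Northcott resolution; see \cite{Eis}), a property that is unaffected by the extension $\mathbb{R} \subseteq \mathbb{C}$. Thus $X$ is a real $2$-regular variety containing itself, so Theorem~\ref{t:1}(iii) gives $\py(X) \leqslant 1 + \dim(X)$, while the second estimate in Theorem~\ref{t:3} gives $\py(X) \geqslant 1 + \dim(X)$; hence (b).

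For (a) $\Rightarrow$ (c): write $c \coloneqq \codim(X)$ and $d \coloneqq \dim(X)$. Since $\deg(X) \geqslant 1 + c$ for every non-degenerate irreducible variety, it suffices to rule out $\deg(X) \geqslant 2 + c$; assuming this, I would show $\qp(X) \leqslant c - 1$, which contradicts (a) by Lemma~\ref{l:qpBasic}. By Lemma~\ref{l:qpGen} one may compute $\qp(X)$ using a general finite subset of $X$, so it is enough to produce a general set $\Gamma \subseteq X$ of $c - 1$ points with $I_{\pi_{\Gamma}(X)}$ containing no quadric. Factor $\pi_{\Gamma}$ into $c - 1$ successive projections, each from a general point of the variety produced by the preceding ones; at every stage the current variety is irreducible, non-degenerate, of dimension $d$, and --- because the process stops one step before reaching $\mathbb{P}^{d}$ --- of codimension at least $2$. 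For such a variety a generic inner projection is birational onto its image and therefore drops the degree by exactly $1$. Consequently $\pi_{\Gamma}(X) \subseteq \mathbb{P}^{d+1}$ is a non-degenerate hypersurface of degree $\deg(X) - (c - 1) \geqslant 3$, so its principal saturated ideal is generated in degree at least $3$ and contains no quadric, whence $\qp(X) \leqslant c - 1$.

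The main obstacle is exactly the inner-projection step: one must guarantee that none of the $c - 1$ generic projections is a multiple cover, so that the degree genuinely decreases by one at each stage and the terminal hypersurface has degree at least $3$. Equivalently, the general secant line of a non-degenerate irreducible variety of codimension at least $2$ must be a genuine bisecant rather than a trisecant; this is classical (reduce to the trisecant lemma via a general curve section), but its careful deployment --- passing from a general point of $X$ to a general point of each successive image, and treating cones --- is where the real work lies. An alternative route that avoids inner projections is to prove directly that $\qp(X) = \codim(X)$ forces $\len(X) = \codim(X)$: the inequality $\len(X) \leqslant \qp(X)$ is Theorem~\ref{t:2}, and once the reverse holds the first part of Green's $K_{p,1}$-Theorem \cite{Gre} identifies $X$ as a variety of minimal degree, again giving (c); on this route the inequality $\len(X) \geqslant \codim(X)$ becomes the crux.
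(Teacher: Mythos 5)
Your argument is correct, but its core --- the implication (a) $\Rightarrow$ (c) --- follows a genuinely different route from the paper's. The paper obtains (a) $\Leftrightarrow$ (c) in one stroke from Theorem~\ref{t:maxqp}, whose proof is pure quadric counting: the inequalities $\binom{\qp(X)+1}{2} \leqslant \dim_{\mathbb{C}}(I_X)_2 \leqslant \qp(X)\,\codim(X) - \binom{\qp(X)}{2}$ of Corollary~\ref{c:qpIneq} (consequences of the strict convexity of the sequence $\kept(X)$ in Proposition~\ref{p:convex}) pinch to $\dim_{\mathbb{C}}(I_X)_2 = \binom{\codim(X)+1}{2}$ precisely when $\qp(X)=\codim(X)$, and the classical Castelnuovo--Zak fact identifies that equality with minimal degree; the converse falls out of the same pair of inequalities. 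Your route instead iterates $c-1$ general inner projections and leans on the generalized trisecant lemma --- that a general inner projection of an irreducible non-degenerate variety of codimension at least $2$ is birational --- so the degree drops by exactly one at each step and a non-degenerate hypersurface of degree at least $3$ remains. That lemma is true in characteristic zero, and your reduction to the curve case does close up because the secant line through $p$ and $q$ lies in every hyperplane containing $p$ and $q$, so a general linear curve section through both points sees the entire intersection of that line with $X$; but it is an external classical input the paper's counting argument avoids altogether, and the remaining bookkeeping (irreducibility and non-degeneracy of sections through two assigned points, the cone case, multiplicity one at a general point of each successive image) is genuine work you have only sketched. For the other implications the ingredients coincide: your (b) $\Rightarrow$ (a) is exactly the squeeze in Corollary~\ref{c:mindeg} via Theorem~\ref{t:3}, and your (c) $\Rightarrow$ (b) through $2$-regularity and Theorem~\ref{t:2reg} is interchangeable with the paper's appeal to Corollary~\ref{c:gl}. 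The paper's approach buys reusability, since Corollary~\ref{c:qpIneq} also powers Proposition~\ref{p:one} and the nearly-extremal classification, while yours makes the geometric mechanism transparent; your closing alternative via $\len(X)\geqslant\codim(X)$ would require the reverse of Theorem~\ref{t:2} and is not a shortcut.
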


\noindent
In the nearly-maximal case, we can also compute the Pythagoras number and
classify the varieties under the additional hypothesis that the homogeneous
coordinate ring is Cohen--Macaulay.

\begin{theorem}
  \label{t:5}
  Let $X \subseteq \mathbb{P}^n$ denote a non-degenerate irreducible
  totally-real subvariety.  If $X$ is arithmetically Cohen--Macaulay, then the
  following conditions are equivalent:
  \begin{compactenum}[\upshape a.]
  \item $\qp(X) = \codim(X) - 1$;
  \item $\py(X) = 2 + \dim(X)$;
  \item $\deg(X) = 2 + \codim(X)$ or $X$ is a codimension-one subvariety of a
    variety of minimal degree.
  \end{compactenum}
\end{theorem}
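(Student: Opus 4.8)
The plan is to verify the cyclic chain (b) $\Rightarrow$ (a) $\Rightarrow$ (c) $\Rightarrow$ (b); throughout, set $c \coloneqq \codim(X)$ and $d \coloneqq \dim(X)$, so that $n = c + d$. For (b) $\Rightarrow$ (a) --- which uses neither the arithmetically Cohen--Macaulay hypothesis nor any classification --- suppose $\py(X) = d+2$. Theorem~\ref{t:3} gives $d + 2 \geqslant n + 1 - \qp(X)$, so $\qp(X) \geqslant c-1$, whereas Lemma~\ref{l:qpBasic} gives $\qp(X) \leqslant c$. If $\qp(X)$ were $c$, then Theorem~\ref{t:4} would force $\py(X) = d+1$, a contradiction; hence $\qp(X) = c-1$.

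For (a) $\Rightarrow$ (c), assume $\qp(X) = c-1$. Since $\qp(X) < c$, Theorem~\ref{t:4} shows that $X$ is not a variety of minimal degree, so $\deg(X) \geqslant c+2$; when equality holds we are in the first case of (c), so suppose $\deg(X) \geqslant c+3$. By Lemma~\ref{l:qpGen} a general subset $\Gamma$ of $c-2$ closed points fails to make the ideal quadric-free, so $V \coloneqq \pi_{\Gamma}(X) \subseteq \mathbb{P}^{d+2}$ --- a non-degenerate irreducible subvariety of codimension two --- lies on a quadric hypersurface $Q$; non-degeneracy of $V$ forces $Q$ to be irreducible, hence a variety of minimal degree in which $V$ has codimension one. (Projecting from one further general point gives a hypersurface in $\mathbb{P}^{d+1}$ of degree $\deg(X)-c+1 \geqslant 4$, consistent with the absence of quadrics.) Transferring this codimension-one containment from $V$ back to $X$ is where the arithmetically Cohen--Macaulay hypothesis is essential: combining Theorem~\ref{t:2} (so that $\len(X) \leqslant c-1$) with the classification of arithmetically Cohen--Macaulay subvarieties whose minimal free resolution has a nearly maximal linear strand --- in the spirit of Proposition~\ref{p:qpEq} and Proposition~\ref{p:prism} --- one concludes that $X$ is a codimension-one subvariety of a variety of minimal degree of dimension $d+1$. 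Extracting the graded Betti numbers from the value of the quadratic persistence and then reading off the geometry is the principal obstacle; the conclusion genuinely fails without the Cohen--Macaulay hypothesis (for example for non-normal projections of varieties of minimal degree), so the argument must exploit the length and freeness of the resolution rather than the numerical invariant $\qp(X)$ alone.

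For (c) $\Rightarrow$ (b), treat the two bounds separately. For the upper bound, in either case of (c) the variety $X$ lies in a $2$-regular variety of dimension $d+1$ --- varieties of minimal degree are $2$-regular, and an arithmetically Cohen--Macaulay variety of almost minimal degree lies on one of dimension one more --- so Theorem~\ref{t:1}(iii) yields $\py(X) \leqslant d+2$; alternatively, when $\deg(X) = c+2$ one shows $\green(X) = c-1$ and applies Theorem~\ref{t:1}(ii). For the lower bound it suffices, by Theorem~\ref{t:3}, to show $\qp(X) = c-1$. As (c) excludes minimal degree, Theorem~\ref{t:4} gives $\qp(X) \leqslant c-1$; conversely, projecting from a general subset of $c-2$ points leaves a quadric in the ideal --- in the first case of (c) the image is a non-degenerate codimension-two variety of degree $4$, which necessarily lies on a quadric, and in the second case the image of the ambient variety of minimal degree collapses to a quadric hypersurface containing the image of $X$ --- so Lemma~\ref{l:qpGen} gives $\qp(X) > c-2$. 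Hence $\qp(X) = c-1$, and Theorem~\ref{t:3} gives $\py(X) \geqslant n + 1 - (c-1) = d+2$, closing the cycle.
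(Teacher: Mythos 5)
Your cyclic scheme is sound in outline, and two of the three legs essentially reproduce the paper's argument: (b)~$\Rightarrow$~(a) is exactly the first half of Corollary~\ref{c:acmPy} (Theorem~\ref{t:3} plus Lemma~\ref{l:qpBasic} plus the exclusion of $\qp(X)=\codim(X)$ via Theorem~\ref{t:4}), and your upper bound in (c)~$\Rightarrow$~(b) matches the paper's use of Theorem~\ref{t:2reg} and Corollary~\ref{c:acmUp}. In the lower bound of (c)~$\Rightarrow$~(b), the second case is fine (and is just Lemma~\ref{l:qpBasic}.ii plus Theorem~\ref{t:maxqp}), but your claim that a non-degenerate codimension-two variety of degree $4$ necessarily lies on a quadric is asserted without justification; the paper instead routes this case through the quadratic deficiency, Remark~\ref{r:Zak}, and Proposition~\ref{p:one}.

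The genuine gap is in (a)~$\Rightarrow$~(c), which is the entire content of the hard direction of Theorem~\ref{t:acm}. You propose to combine Theorem~\ref{t:2} with ``the classification of arithmetically Cohen--Macaulay subvarieties whose minimal free resolution has a nearly maximal linear strand,'' but Theorem~\ref{t:2} gives $\len(X)\leqslant \qp(X)=c-1$, an \emph{upper} bound on the length of the linear strand, whereas invoking that classification (Green's $K_{p,1}$-Theorem) requires the \emph{lower} bound $\len(X)\geqslant c-1$. Nothing in your argument produces this lower bound: knowing that a general inner projection from $c-2$ points still lies on a quadric does not, by itself, manufacture $c-2$ steps of linear syzygies of $I_X$, and you yourself label the transfer back to $X$ as ``the principal obstacle'' without resolving it. The paper's actual route is quite different: cut $X$ with $\dim(X)$ general hyperplanes to obtain points $Z$ in linearly general position, prove a Strong-Castelnuovo-type statement (Lemma~\ref{l:0dim}) that such points with $\qp(Z)\geqslant\codim(Z)-1$ lie on a rational normal curve, deduce from this containment that the linear strand of $I_Z$ has at least $\codim(Z)-1$ nonzero terms (Aprodu--Nagel), lift that lower bound to $I_X$ using the regular sequence of hyperplanes --- this is where arithmetic Cohen--Macaulayness enters --- and only then apply Green's $K_{p,1}$-Theorem together with Theorem~\ref{t:maxqp} and Remark~\ref{r:Zak} to land in condition (c). Without some substitute for this chain, your step from $\qp(X)=c-1$ to the dichotomy in (c) is unsupported.
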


\noindent
This fifth result is a counterpart to the third part of Green's
$K_{p,1}$\nobreakdash-Theorem~\cite{Gre}*{Theorem~3.c.1} where quadratic
persistence supplants the degree of a morphism. More directly,
Theorem~\ref{t:2} and Theorem~\ref{t:3} establish the second part of Green's
$K_{p,1}$\nobreakdash-Theorem for totally-real projective varieties.  Many of
the implications between the three conditions in both Theorem~\ref{t:4} and
Theorem~\ref{t:5} continue to hold under weaker assumptions on the subvariety
$X$; see Section~\ref{s:qp}.

\subsection*{Explicit Bounds in Special Cases}

To better assess the power of our geometric approach, we produce concrete
bounds on the Pythagoras numbers and the quadratic persistence for projective
curves and toric subvarieties.  Corollary~\ref{c:canon} proves that the
Pythagoras number for a canonical real curve is bounded above by its real
gonality: the lowest degree of a real non-constant morphism from the curve to
the real projective line.  Using Green's Conjecture~\cite{Voi} for a general
canonical curve $X \subset \mathbb{P}^{g-1}$, Example~\ref{e:genCan}
specializes the bounds from Theorem~\ref{t:1} and Example~\ref{e:strict} shows
that the quadratic persistence is strictly larger than the number $\len(X)$ of
nonzero entries in the first row of the Betti table for its homogeneous
coordinate ring.  Similarly, for a high-degree curve $X \subset \mathbb{P}^n$,
Corollary~\ref{c:gon} bounds the Pythagoras number via its gonality and, using
the Gonality Conjecture~\cite{EL}, Example~\ref{e:high} establishes that
$\qp(X) > \ell(X)$.  The close relationship between the Pythagoras number and
these other sophisticated numerical invariants is remarkable, and the
observations about quadratic persistence answer Question~5.8 in \cite{HK15}
negatively.

When compared to curves, the proofs of the analogous bounds for projective
toric subvarieties reverse the flow of information.  Instead of the well-known
invariant for curves, Corollary~\ref{c:lines} proves that the Pythagoras
number of a projective toric subvariety
$X_{P \cap \mathbb{Z}^d} \subseteq \mathbb{P}^n$ is bounded above by a simple
new invariant: the minimal number of parallel lines needed to cover all of the
lattice points in the polytope $P \subset \mathbb{R}^d$.  For toric surfaces,
this invariant---disguised as the lattice width of a polygon---is already
related to linear syzygies; see Definition~1.5 and Conjecture~1.6 in
\cite{CCDL}.  Proposition~\ref{p:prism} computes the quadratic persistence for
any toric subvariety associated to a tall prism (the product a lattice
polytope and a sufficiently long interval) and, thereby, deduces both the
Pythagoras number and the number of nonzero entries in the first row of the
Betti table.  In contrast with our examples for curves, we have
$\qp(X) = \len(X)$ in this situation.  Example~\ref{e:segVer} showcases a
family for which the hypothesis on the height of the prism is vacuous.  As
Section~6 in \cite{Rai} underscores the difficulty in describing the linear
syzygy modules for the Segre–Veronese embeddings of a product of projective
spaces, our numerical success with the toric subvarieties associated to a tall
prisms is all the more surprising.

\subsection*{Pythagoras Numbers in Applications}

By emphasizing projective subvarieties, our approach unifies various
viewpoints.  Hilbert's Theorem~\cite{Hil}, demonstrating that every
nonnegative ternary quartic is the sum of $3$ squares, is the primal source
for the Pythagoras numbers of homogeneous elements in a real affine algebra.
From our perspective, this is the same as showing that the Pythagoras number
of the Veronese surface $\mathbb{P}^2 \subset \mathbb{P}^5$ equals $3$; see
Example~\ref{e:PP2Up}.  Unlike the intervening work on rational functions,
\cite{CLR} again concentrates on homogeneous polynomials, providing both lower
and upper bounds on their Pythagoras numbers.  Advancing these ideas,
\cite{Sch17} establishes new lower bounds that are much closer to the existing
upper bounds.  By re-proving Theorem~3.6 in \cite{Sch17},
Example~\ref{e:PP2Low} hints at the universality of our geometric paradigm.

In optimization, the Pythagoras number is typically recast in terms of the
rank of Gram matrices.  Each quadratic form
$f \in S \coloneqq \mathbb{R}[x_0, x_1, \dotsc, x_n]$ corresponds to a real
symmetric matrix $\mathbf{A}$ such that
$f = \bm{x}^{\transpose} \mathbf{A} \, \bm{x}$, where $\bm{x}$ is the column
vector whose entries are the variables $x_0, x_1, \dotsc, x_n$.  By the
spectral theorem, the quadratic form $f$ is a sum of squares if and only if
the matrix $\mathbf{A}$ is positive semidefinite. As a consequence, the set of
sum-of-squares representations for a quadratic form in any real affine algebra
is the intersection of the cone of positive-semidefinite matrices with an
affine linear space, called the Gram spectrahedron.  Hence, deciding whether a
quadratic form is a sum of squares is equivalent to the feasibility of a
semidefinite programming problem.  Better yet, the polynomial $f \in S_2$ is a
sum of $r$ squares if and only if the positive-semidefinite matrix
$\mathbf{A}$ has rank $r$.  Thus, the Pythagoras number is the maximum rank
among matrices of minimal rank in the Gram spectrahedra.  Computationally,
upper bounds on this Pythagoras number allow \cite{BM} to improve the
scalability of such optimization problems by factoring the matrix $\mathbf{A}$
as $\mathbf{B} \, \mathbf{B}^{\transpose}$, where $\mathbf{B}$ is a real
$(n + 1) \times r$\nobreakdash-matrix.  Although this surrogate destroys
convexity, \cites{BVB0, BVB} show that local methods do reliably converge to
global optima.  Beyond the upper bounds in Theorem~\ref{t:1},
Theorems~\ref{t:4}--\ref{t:5} can both be reinterpreted as structural results
about Gram spectrahedra.  For instance, Corollary~\ref{c:acmUp} recovers
Theorem~3.5 in \cite{CPSV}.

Pythagoras numbers also appear in the study of metric embeddings of graphs.
As explained in Section~1 of \cite{LV}, deciding whether a simple graph has an
isometric embedding into the $(r - 1)$\nobreakdash-dimensional spherical
metric space is tantamount to solving a matrix completion problem.
Specifically, given a graph $G$ with $n + 1$ vertices, one seeks the smallest
number $r \in \mathbb{N}$ such that, for any positive-semidefinite matrix
$\mathbf{M}$, there exists a positive-semidefinite matrix $\mathbf{N}$ of rank
$r$ satisfying $\mathbf{M}_{\,i,i} = \mathbf{N}_{i,i}$ for all
$1 \leqslant i \leqslant n+1$ and $\mathbf{M}_{\,i,j} = \mathbf{N}_{i,j}$ for
each edge $\{i,j\}$ in the graph $G$. Determining the Pythagoras number of the
subvariety $X_G \subseteq \mathbb{P}^n$, defined as zero-locus of the
quadratic monomials $x_i \, x_{\!j}$ for every pair $\{ i, j \}$ of distinct
vertices that do not form an edge in the graph $G$, is the algebro-geometric
reformulation.  Example~\ref{e:cycle} and Example~\ref{e:Peter} specialize the
bounds in Theorem~\ref{t:1} for cycles and the Petersen graph respectively.
Providing even further evidence of the broad scope of the geometric approach,
Corollary~\ref{c:tree} rediscovers Lemma~2.7 in \cite{LV} and
Remark~\ref{r:tree} raises an enticing analogy between the treewidth of a
graph and the dimension of a $2$\nobreakdash-regular variety.

\subsection*{Organization}

Section~\ref{s:py} proves all three of the upper bounds in
Theorem~\ref{t:1}. Examples show that these upper bounds on the Pythagoras
number of a real subvariety can be sharp and are frequently inequivalent.  In
Section~\ref{s:qp}, we introduce quadratic persistence, outline the essential
properties of this new numerical invariant, and derive the lower bounds
appearing in Theorem~\ref{t:3}.  Section~\ref{s:res} relates the quadratic
persistence of a complex subvariety to the linear syzygies of its homogeneous
ideal via the Bernstein--Gelfand--Gelfand correspondence.  Finally,
Section~\ref{s:toric} hones our bounds on the quadratic persistence for some
projective toric subvarieties.

\subsection*{Conventions}

Throughout the article, the set of nonnegative integers denoted by
$\mathbb{N}$.  For any $n \in \mathbb{N}$, let
$S \coloneqq \mathbb{R}[x_0, x_1, \dotsc, x_n]$ be the polynomial ring with
the standard $\mathbb{N}$\nobreakdash-grading induced by setting
$\deg(x_i) = 1$ for all $0 \leqslant i \leqslant n$.  A real quadratic
function $f$ is positive semidefinite if it is nonnegative on
$\mathbb{R}^{n+1}$.  We write $\mathscr{S}_+$ for the closed convex cone of
positive-semidefinite forms in $S_{2}$.

A real projective subvariety
$X \subseteq \mathbb{P}^n \coloneqq \operatorname{Proj}(S)$ is a reduced
subscheme of projective space over the field $\mathbb{R}$ of real numbers.
Likewise, a complex projective subvariety
$X \subseteq \mathbb{P}^n \coloneqq \operatorname{Proj}(\mathbb{C}[x_0, x_1,
\dotsc, x_n])$ is a reduced subscheme of projective space over the field
$\mathbb{C}$ of complex numbers.  We do \emph{not} require a variety to be
irreducible.  A projective subvariety is non-degenerate if it is not contained
in a hyperplane.  The variety $X$ is totally real if the set $X(\mathbb{R})$
of real points is Zariski dense in its set $X(\mathbb{C})$ of complex points
or, equivalently, if every irreducible component of $X$ has a nonsingular real
point.

\section{Upper bounds on the Pythagoras number}
\label{s:py}

\noindent
We provide three different upper bounds on the Pythagoras number of a
projective subvariety.  In addition, we specialize these results to obtain
concrete upper bounds on the Pythagoras numbers of several classes including
varieties of small degree, projective curves, varieties associated to graphs,
and toric subvarieties.  We also show that these bounds are often sharp and,
in general, incomparable.

\subsection*{An upper bound through convex geometry}

To establish our first bound, we exploit the existence of low-rank matrices on
sufficiently large affine subspaces of quadratic forms that intersect the cone
$\mathscr{S}_{+}$ of positive-semidefinite forms.  Given a real subvariety
$X \subseteq \mathbb{P}^n$, we write $I_X$ for its saturated homogeneous ideal
in $S \coloneqq \mathbb{R}[x_0, x_1, \dotsc, x_n]$ and $R \coloneqq S/I_X$
denotes its homogeneous coordinate ring.  Let
$\Sos_X \coloneqq \left\{ f \in R_2 \; \middle| \; \text{there exist
    $g_1, g_2, \dotsc, g_r \in R_{1}$ such that
    $f = g_1^2 + g_2^2 + \dotsb + g_r^2$} \right\}$ be the convex cone of sums
of squares in $R_2$ and define the Pythagoras number of the variety $X$ to be
\[
  \py(X) \coloneqq \min \left\{ r \in \mathbb{N} \; \middle| \;
    \text{
      \begin{minipage}[c]{210pt}
        for all $f \in \Sos_X$, there
        exists $g_1, g_2, \dotsc, g_r \in R_{1}$ such that
        $f = g_1^2 + g_2^2 + \dotsb + g_r^2$
      \end{minipage}
    } \right\} \, .
\]
We first strengthen the non-strict inequality derived from Theorem~4.4 and
Corollary~5.3 in \cite{CLR}.

\begin{theorem} 
  \label{t:convex}
  Let $X \subseteq \mathbb{P}^n$ be a real subvariety such that
  $X(\mathbb{R})$ is non-degenerate.  When the ideal $I_X$ contains at least
  one nonzero quadratic form, we have the inequality
  $\binom{\py(X)+1}{2} < \dim_{\mathbb{R}} (R_2)$.
\end{theorem}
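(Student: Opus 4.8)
The plan is to set $N \coloneqq \dim_{\mathbb{R}}(R_2)$ and to argue by contradiction: suppose $\py(X) = r$ with $\binom{r+1}{2} \geqslant N$. Identify $R_2$ with the quotient $S_2 / (I_X)_2$, and realize $\Sos_X$ as the image of the cone $\mathscr{S}_+ \cap V$, where $V \subseteq S_2$ is a linear complement to $(I_X)_2$; thus $\dim_{\mathbb{R}} V = N$. Every $f \in \Sos_X$ that lies on an extreme ray is represented by a positive-semidefinite Gram matrix $\mathbf{A}$ whose rank equals the minimal number of squares needed for $f$, and the key dimension count is that the set of positive-semidefinite matrices of rank at most $r$ has an explicit local dimension: near a rank-exactly-$k$ matrix, the variety of symmetric $(n{+}1)\times(n{+}1)$ matrices of rank $\leqslant k$ has codimension $\binom{n+1-k}{2}$ in the space of symmetric matrices.

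First I would invoke the convex-geometric input behind Theorem~4.4 and Corollary~5.3 of \cite{CLR}: because $I_X$ contains a nonzero quadratic form, $\Sos_X$ is a full-dimensional closed convex cone in $R_2$ that is \emph{not} all of $R_2$, so it has a nonempty boundary, and in fact $\Sos_X$ has an extreme ray spanned by some $f$ whose unique (up to the relevant equivalence) Gram representative $\mathbf{A}$ has rank exactly $\py(X) = r$. The nonstrict bound $\binom{r+1}{2} \leqslant N$ follows by comparing the dimension of the face of the positive-semidefinite cone through $\mathbf{A}$ — which is $\binom{r+1}{2}$ — against the dimension of its image inside $\Sos_X \subseteq R_2$. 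To upgrade $\leqslant$ to $<$, the decisive observation is that the affine slice defining the Gram spectrahedron is cut out by $(I_X)_2 \ne 0$: the linear space $V$ has codimension $\dim_{\mathbb{R}}(I_X)_2 \geqslant 1$ in $S_2$, and this forces the face of $\Sos_X$ through $f$ to have dimension \emph{strictly} less than $\binom{r+1}{2}$, since the restriction of the rank-$\leqslant r$ locus to the proper subspace $V$ drops dimension by at least one unless the nonzero quadric in $I_X$ fails to meet the tangent space to that locus at $\mathbf{A}$ — and the non-degeneracy hypothesis on $X(\mathbb{R})$ is precisely what rules out that degenerate alignment, because it guarantees the linear forms appearing in the rank-$r$ decomposition of $f$ span all of $R_1$ and hence no quadric in $I_X$ can be tangent.

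The main obstacle is the last step: making rigorous the claim that imposing the single linear condition coming from a nonzero element of $(I_X)_2$ strictly decreases the dimension of the relevant face, rather than merely giving the nonstrict bound. The clean way to handle this is to work not with a general slice but with the specific face: let $f$ lie on an extreme ray of $\Sos_X$ with minimal-rank Gram matrix $\mathbf{A} = \sum_{i=1}^r \bm{v}_i \bm{v}_i^{\transpose}$, let $W = \Span_{\mathbb{R}}\{\bm{v}_1, \dotsc, \bm{v}_r\} \subseteq \mathbb{R}^{n+1}$, and observe that the face of $\mathscr{S}_+$ through $\mathbf{A}$ is $\{\mathbf{B} \succeq 0 : \operatorname{image}(\mathbf{B}) \subseteq W\}$, of dimension $\binom{r+1}{2}$. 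Its image in $R_2$ is spanned by the quadratic forms $\bm{x}^{\transpose}\mathbf{B}\,\bm{x}$ modulo $(I_X)_2$; if this image had the full dimension $\binom{r+1}{2}$, then the symmetric bilinear forms supported on $W$ would inject into $S_2/(I_X)_2$, forcing every quadric in $I_X$ to vanish identically on $W$. But a nonzero quadric vanishing on the $r$-dimensional subspace $W$ together with the surjection $W \twoheadrightarrow R_1$ (which is exactly non-degeneracy of $X(\mathbb{R})$, since the $g_i = \bm{v}_i^{\transpose}\bm{x}$ must span $R_1$ when $f$ has minimal rank) would yield a nonzero quadratic form in $I_X$ restricting to zero on a subspace mapping onto $R_1$ — and pulling back along $W \twoheadrightarrow R_1$ shows this quadric is already zero in $R_2$, a contradiction. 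Hence the image has dimension $< \binom{r+1}{2}$, and since it sits inside $R_2$ we get $\binom{r+1}{2} > \dim_{\mathbb{R}}(R_2)$ is impossible, i.e. $\binom{\py(X)+1}{2} < \dim_{\mathbb{R}}(R_2)$, as claimed. One technical point to dispatch along the way is the existence of an extreme ray whose minimal Gram matrix has rank exactly $\py(X)$; this is the content of the cited results of \cite{CLR}, or can be reproved directly by a standard perturbation argument inside the Gram spectrahedron.
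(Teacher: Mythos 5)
There are genuine gaps here, and the central mechanism you propose does not work. First, the structural claim that $\Sos_X$ has an extreme ray spanned by an $f$ whose minimal-rank Gram representative has rank $\py(X)$ is false: $\Sos_X$ is the conic hull of the squares $g^2$ with $g \in R_1$, so its extreme rays are spanned by single squares, and any lift $\tilde{g} \in S_1$ gives the rank-one positive-semidefinite Gram representative $\tilde{g}\,\tilde{g}^{\transpose}$ of $g^2$. The object that carries the rank information is not an extreme ray of $\Sos_X$ but an extreme point of the Gram spectrahedron $A \cap \mathscr{S}_+$ of an element $f$ that genuinely requires $\py(X)$ squares. Second, the step you use to force a dimension drop rests on a false implication: writing $L_W$ for the symmetric forms supported on the $r$-dimensional subspace $W$, the condition ``$L_W$ injects into $R_2$'' means $L_W \cap (I_X)_2 = 0$, whereas ``every quadric in $I_X$ vanishes identically on $W$'' means $(I_X)_2 \subseteq L_W^{\perp}$ for the trace pairing; these are essentially dual conditions and neither implies the other (a generic $W$ of small dimension satisfies the first and violates the second). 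Relatedly, the parenthetical claim that minimality of the rank forces the $g_i$ to span $R_1$ is unjustified, since $\py(X)$ is typically much smaller than $\dim_{\mathbb{R}} R_1 = n+1$. Finally, the concluding inference --- ``the image has dimension $< \binom{r+1}{2}$, and since it sits inside $R_2$ we get that $\binom{r+1}{2} > \dim_{\mathbb{R}}(R_2)$ is impossible'' --- is a non sequitur: knowing the image is a proper subspace of a $\binom{r+1}{2}$-dimensional space lying inside $R_2$ yields no comparison between $\binom{r+1}{2}$ and $\dim_{\mathbb{R}} R_2$.

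Even after repairing the first point, the face computation you are reaching for (for an extreme point $\mathbf{A}$ of $A \cap \mathscr{S}_+$ of rank $k$ with column space $W$, extremality forces $L_W \cap (I_X)_2 = 0$ and hence $\binom{k+1}{2} \leqslant \dim_{\mathbb{R}} R_2$) only delivers the non-strict inequality, which is precisely the bound from \cite{CLR} that the theorem is meant to strengthen. The strict inequality is the actual content, and the paper obtains it by a different route: for every $f \in \Sos_X$ the Gram spectrahedron is compact (here non-degeneracy of $X(\mathbb{R})$ is used to ensure $I_2$ contains no nonzero sum of squares), Barvinok's Proposition~II.13.1 handles the regime $\dim_{\mathbb{R}} R_2 < \binom{\py(X)+1}{2}$, and the boundary case of equality is excluded by Barvinok's Proposition~II.13.4, which applies because the hypothesis $(I_X)_2 \neq 0$ forces $\py(X) < n+1$, together with a separate Macaulay-type argument when $\py(X) = 1$. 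Your proposal supplies no working substitute for this boundary analysis.
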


\begin{proof} 
  Set $I \coloneqq I_X$, let $\eta_{2} \colon S_2 \to R_2 = S_2/I_2$ denote
  the degree-two piece of the canonical quotient map, and set
  $r \coloneqq \py(X) - 1$.  Fix a nonzero $f \in \Sos_X$.  Since $X$ is
  non-degenerate, there exists a polynomial representative $f \in S_2$ such
  that $\eta_{2}(\tilde{f}) = f$ and $\tilde{f}$ is also a sum of squares.
  Each nonzero quadratic form in $S$ corresponds to a real symmetric
  $(n+1) \times (n+1)$-matrix and this form is a sum of squares in $S$ if and
  only if the matrix is positive semidefinite.  Better yet, the form is a sum
  of $r$ squares if and only if the corresponding positive-semidefinite matrix
  has rank $r$.  Let $A$ be the affine subspace of symmetric matrices
  corresponding to $\tilde{f} + I_2$, that is all polynomial representatives
  of $f$. By construction, the affine subspace $A$ has dimension equal to
  $\dim_{\mathbb{R}} (I_{2})$ and codimension equal to
  $\dim_{\mathbb{R}} (R_2)$.  Moreover, $A$ has a nonempty intersection with
  the cone $\mathscr{S}_{+}$ of positive-semidefinite matrices because
  $f \in \Sos_X$.  Since $X(\mathbb{R})$ is non-degenerate, the vector space
  $I_2$ does not contain a sum of squares and the intersection
  $A \cap \mathscr{S}_+$ is compact.  If
  $\codim(A) = \dim_{\mathbb{R}} (R_2) < \binom{\py(X)+1}{2} =
  \binom{r+2}{2}$, then Proposition~II.13.1 in \cite{Bar} implies that there
  exists a matrix in $A \cap \mathscr{S}_{+}$ with rank at most
  $r = \py(X) - 1$.  However, this would contradict the definition of the
  Pythagoras number, so we deduce that
  $\codim(A) = \dim_{\mathbb{R}} (R_2) \geqslant \binom{\py(X)+1}{2} =
  \binom{r+2}{2}$.

  It remains to prove that the equality
  $\codim(A) = \dim_{\mathbb{R}} (R_2) = \binom{r+2}{2} = \binom{\py(X)+1}{2}$
  is also impossible.  If $r = 0$ and $\dim_{\mathbb{R}} R_2 = 1$, then the
  Macaulay Characterization Theorem~\cite{HH}*{Theorem~6.3.8} shows that the
  Hilbert function of $X$ equals $1$ for all integers greater than $1$, so $X$
  is a single point and, hence, degenerate.  Finally, suppose that $r > 0$ and
  $\dim_{\mathbb{R}} R_2 = \binom{r+2}{2}$.  Since every quadratic form in
  $S_{2}$ has rank at most $n+1$, we see that $\py(X) \leqslant n+1$.
  However, the ideal $I$ contains, by hypothesis, at least one nonzero
  quadratic form, so it follows that $\py(X) < n+1$ and $r+2 \leqslant n+1$.
  Thus, Proposition~II.13.4 in \cite{Bar} proves that there is a matrix in
  $A \cap \mathscr{S}_+$ with rank at most $r = \py(X) - 1$ which again
  contradicts the definition of the Pythagoras number.
\end{proof}

\subsection*{An upper bound from differential topology}

To prove our second bound, we rely on a topological argument originating in
Hilbert's proof \cite{Hil} that every nonnegative ternary quartic is a sum of
$3$ squares.  More recently, Theorem~3.5 in \cite{CPSV} and Section~2 in
\cite{BPSV} develop variants.  Our version depends on a technical property of
a basepoint-free linear series; compare with the
$p$\nobreakdash-base-point-free property in Subsection~1.2 of
\cite{BSV17}. Following Definition~6.0.23 in \cite{CLS}, a linear series
$W \subseteq R_1$ is basepoint-free if the linear forms in $W$ have no common
zeroes (neither real nor complex) on the underlying variety $X$.

\begin{theorem}
  \label{t:series} 
  Let $X \subseteq \mathbb{P}^n$ be a real subvariety such that
  $X(\mathbb{R})$ is non-degenerate.  If $k \in \mathbb{N}$ is the smallest
  integer such that any basepoint-free linear series $W \subseteq R_1$ of
  dimension $k$ generates all of $R_2$, then we have $\py(X) \leqslant k$.
\end{theorem}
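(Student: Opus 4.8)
The plan is to imitate Hilbert's topological argument, working on the real points of $X$. Fix a nonzero $f \in \Sos_X$; after rescaling we may assume $f$ lies in the relative interior of $\Sos_X$, so $f$ can be written as a sum of finitely many squares $f = \sum_{i=1}^m g_i^2$ with $g_1, g_2, \dotsc, g_m \in R_1$. I want to show $f$ can in fact be written with only $k$ summands. Consider the linear series $W \coloneqq \Span_{\mathbb{R}}(g_1, g_2, \dotsc, g_m) \subseteq R_1$; if $\dim_{\mathbb{R}} W \leqslant k$ there is nothing to do, so assume $\dim_{\mathbb{R}} W > k$. The key geometric object is the set $Z(f) \subseteq X(\mathbb{R})$ of common real zeroes of $g_1, g_2, \dotsc, g_m$, which equals the real zero locus of $f$ on $X(\mathbb{R})$. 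The strategy is to find a single linear form $h \in W$ that vanishes on $Z(f)$ together with a basepoint-free subseries; then $f$ restricted away from the zero locus of $h$ will be a sum of squares in the smaller ring, and an inductive descent on the number of squares finishes the argument.

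The heart of the matter is the following: let $V \subseteq W$ be the subspace of forms vanishing on $Z(f)$. Since $\dim_{\mathbb{R}} W > k$ and $W$ does not generate $R_2$ only if it fails to be basepoint-free of dimension $k$ — here I would argue that any $k$-dimensional basepoint-free subseries of $W$ generates $R_2$, while $W$ itself may have base points along $Z(f)$. The topological input enters as follows: the assignment sending a point of $X(\mathbb{R}) \setminus Z(f)$ to the line $[g_1(x) : \dotsb : g_m(x)] \in \mathbb{P}^{m-1}(\mathbb{R})$ defines a continuous map whose image is constrained by the basepoint-free generation hypothesis; a dimension count (the image misses a generic linear subspace of large codimension because $R_2$ is generated in dimension $k$) forces the existence of a nonzero $h = \sum c_i g_i$ with $h|_{X(\mathbb{R})} \equiv 0$ on a large subset, hence $h \in I_X$ after possibly enlarging — i.e. $h = 0$ in $R_1$ gives a genuine linear relation among the $g_i$. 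Using that relation and the classical trick of rotating the vector $(g_1, \dotsc, g_m)$ by an orthogonal transformation (which preserves $\sum g_i^2$), one eliminates one of the squares, reducing $m$ by one. Repeating until $m \leqslant k$, and checking at each stage that the residual form remains a sum of squares and that the span of the surviving forms still satisfies the basepoint-free generation property, yields $\py(X) \leqslant k$.

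I expect the main obstacle to be making the reduction step rigorous when $Z(f)$ is nonempty and positive-dimensional: the naive linear-algebra elimination removes one square only when there is an honest linear dependence among $g_1, g_2, \dotsc, g_m$ in $R_1$, but a priori the $g_i$ could be linearly independent while still failing to span a basepoint-free series. The resolution is to observe that if $W \subseteq R_1$ is linearly independent of dimension $> k$ and fails to generate $R_2$, then by the defining property of $k$ the series $W$ must have a base point $p$ on $X$ (over $\mathbb{C}$, or on $X(\mathbb{R})$ after noting that $f$'s zero locus is totally real when $X$ is), and then all $g_i$ vanish at $p$; passing to the local ring at $p$ and using that $f = \sum g_i^2$ forces, via the semidefiniteness and the structure of the Gram matrix, that the Gram matrix of $f$ has rank $< m$, which is exactly the sought linear dependence. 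The care needed is in handling components of $Z(f)$ separately and in ensuring compactness of the relevant Gram spectrahedron (guaranteed by non-degeneracy of $X(\mathbb{R})$, as in the proof of Theorem~\ref{t:convex}), so that the minimum-rank representation actually achieves a rank-$k$ Gram matrix.
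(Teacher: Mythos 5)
Your overall strategy --- fix one representation $f=\sum_{i=1}^m g_i^2$ and repeatedly eliminate a square by finding a linear dependence among the $g_i$ --- cannot work as written, and the specific lemma you invoke to produce the dependence is false. If $g_1,\dotsc,g_m$ are linearly independent in $R_1$, then the Gram matrix of the representation $\sum_i g_i^2$ has rank exactly $m$ whether or not the $g_i$ share a common zero $p\in X$: the forms $x_0$ and $x_1$ both vanish at $[0:0:1]$ yet are independent, and the Gram matrix of $x_0^2+x_1^2$ in these generators is the identity. So ``all $g_i$ vanish at $p$'' does not force a rank drop, and the orthogonal-rotation trick, which only reparametrizes a fixed Gram matrix, can never reduce the number of squares below $\dim_{\mathbb{R}}\Span(g_1,\dotsc,g_m)$. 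To shorten a representation one must pass to a \emph{different} point of the Gram spectrahedron, that is, replace $\sum_i g_i^2$ by $\sum_j h_j^2$ where $\sum_j h_j^2-\sum_i g_i^2$ is a nonzero element of $(I_X)_2$; nothing in your argument produces such a move. Relatedly, the linear form $h=\sum_i c_i g_i$ extracted from your dimension count is only claimed to vanish on a large subset of $X(\mathbb{R})$, which does not make it zero in $R_1$ unless that subset is Zariski dense, and no reason for density is given.

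The paper uses the hypothesis on $k$ in an entirely different way: the condition that every basepoint-free $k$-dimensional series generates $R_2$ is precisely the statement that the differential of the map $\varsigma_k(g_1,\dotsc,g_k)=\sum_i g_i^2$ is surjective at every $k$-tuple of independent forms without common zeros, since that differential has image $\ideal{g_1,\dotsc,g_k}_2$. The image of $\varsigma_k$ is then open near such points by the implicit function theorem and closed by properness, and a connectedness argument --- the discriminant meets $\operatorname{int}(\Sos_X)$ in codimension at least $2$, and the branch locus cannot be a divisor without violating the defining property of $k$ --- forces the image to be all of $\Sos_X$. That argument genuinely moves through the Gram spectrahedron rather than staying inside a single representation, which is exactly the step your proposal is missing.
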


\begin{proof}
  Any linear series of dimension at most $\dim(X)$ determines a nonempty
  subscheme of $X$, so we may assume that $k > \dim(X)$ and a general linear
  series in $R_1$ of dimension $k$ is basepoint-free.  For any positive
  integer $r$, let $\varsigma_{\,r} \colon \bigoplus_{i=1}^r R_1 \to R_2$ be
  the map defined by $\varsigma(g_1, g_2, \dotsc, g_r) = \sum_{i=1}^r g_i^2$.
  It suffices to prove that $\operatorname{Im}(\varsigma_{\,k}) = \Sos_X$.

  We begin with a connectedness observation.  Since $X(\mathbb{R})$ is
  non-degenerate, we may regard $\varsigma_r$ as a continuous map from
  $\mathbb{P}(\bigoplus_{i=1}^r R_1)$ to $\mathbb{P}(R_2)$ where both spaces
  are endowed with the Euclidean topology as in Lemma~2.2 of \cite{BPSV}.  As
  a continuous map between compact Hausdorff spaces, it is both proper and
  closed.  The differential $\operatorname{d}\!\varsigma_r$ at the point
  $(g_1, g_2, \dotsc, g_r)$ sends the $r$\nobreakdash-tuple of linear forms
  $(h_1, h_2, \dotsc, h_r)$ to the sum $2 \sum_{i=1}^r h_i g_i$, so the image
  is the graded component of the ideal generated by linear forms, namely the
  $\mathbb{R}$\nobreakdash-vector space $\ideal{g_1, g_2, \dotsc, g_r}_2$.
  The defining condition for $k$ implies that the differential
  $\operatorname{d}\!\varsigma_{\, k}$ is surjective at all points
  $(g_1, g_2, \dotsc, g_k)$ where the homogeneous polynomials
  $g_1, g_2, \dotsc, g_k$ are linearly independent and do not have a common
  zero on $X$.  If $\Lambda$ denotes the branch locus of $\varsigma_{\, k}$
  and $\Delta$ is the Zariski closure of all quadratic forms that are singular
  at a smooth point of $X$ (also known as the discriminant variety), then
  $\Phi \coloneqq \Sos_X \setminus (\Lambda \cup \Delta)$ is a dense subset of
  $\Sos_X$ in the Euclidean topology.  The implicit function theorem shows
  that the subset $\operatorname{Im}(\varsigma_{\, k}) \cap \Phi$ is open.
  The subset $\operatorname{Im}(\varsigma_{\, k}) \cap \Phi$ is also closed
  (in $\Phi$) because the map $\varsigma_{\, k}$ is closed and the hypothesis
  that $k > \dim(X)$ ensures that it is nonempty.  Thus, the intersection
  $\operatorname{Im}(\varsigma_{\, k}) \cap \Phi$ is a union of connected
  components of $\Phi$.

  Using this connectivity, we complete the proof.  A real quadratic form lies
  in $\Delta \cap \operatorname{int}(\Sos_X)$ if and only if there exists a
  conjugate pair of complex points in $X$ at which it is singular, so the set
  $\Delta \cap \operatorname{int}(\Sos_X)$ has codimension at least $2$ in
  $R_2$.  Since $\dim \Sos_X = \dim_{\mathbb{R}} R_2$, we see that
  $\Sos_X \setminus \Delta$ is connected.  If $\Phi$ is also connected, as
  occurs when the branch locus $\Lambda$ is empty, then we have
  $\Phi \subseteq \operatorname{Im}(\varsigma_{\, k})$. If not, then $\Lambda$
  is a divisor and two connected components of $\Phi$ are separated by an
  irreducible component $Z$ of the branch locus $\Lambda$. In particular,
  there is a real smooth point $z$ on the hypersurface $Z$ lying in
  $\operatorname{int}(\Sos_X) \setminus \Delta$.  Since
  $z \in \Lambda \setminus \Delta$, there exists
  $g_1, g_2, \dotsc, g_k \in R_1$ having no common zero in $X$ such that
  $z = g_1^2 + g_2^2 + \dotsb + g_k^2$, but the
  $\mathbb{R}$\nobreakdash-vector space
  $\ideal{g_1, g_2, \dotsc, g_k}_2 \subset R_2$ has codimension $1$
  contradicting the defining condition for $k$.  It follows that
  $\Lambda \subset \Delta$, which implies that $\Phi$ is connected.  Since
  $\Phi$ is dense in $\Sos_X$ and $\varsigma_{\, k}$ is closed, we conclude
  that $\operatorname{Im}(\varsigma_{\, k}) = \Sos_X$.
\end{proof}

The bound in Theorem~\ref{t:series} is hard to determine precisely.
Nonetheless, it is related to the \define{Green--Lazarsfeld index}, which is
defined to be
\begin{align*}
  \green(X) \coloneqq \max \left\{ j \in \mathbb{N} \; \middle| \;
  \text{$\smash{\Tor_k^{\, S}(R, \mathbb{R})_{2+k}} = 0$ for all $k \leqslant j$}
  \right\} \, .
\end{align*}
In other words, $\green(X)$ is the largest $k \in \mathbb{N}$ such that the
homogeneous ideal $I_X$ is generated by quadrics and the first $k-1$ maps in
its minimal free resolution are represented by matrices of linear forms; see
Remark~\ref{r:Betti} and page~155 in \cite{Eis}.

\begin{corollary} 
  \label{c:gl}
  For any real subvariety $X \subseteq \mathbb{P}^n$ such that $X(\mathbb{R})$
  is non-degenerate, we have 
  \[
    \py(X) \leqslant n + 1 - \min\left\{ \green(X), \codim(X) \right\} \, .
  \]
\end{corollary}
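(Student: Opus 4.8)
The plan is to deduce Corollary~\ref{c:gl} from Theorem~\ref{t:series} by showing that a general basepoint-free linear series $W \subseteq R_1$ of dimension $k \coloneqq n + 1 - \min\{\green(X),\codim(X)\}$ already generates $R_2$. Since $X(\mathbb{R})$ is non-degenerate we have $\dim_{\mathbb{R}} R_1 = n+1$, so a general subspace $W \subseteq R_1$ of dimension $k$ is spanned by $k$ general linear forms. Choosing coordinates so that $W = \Span(x_0, x_1, \dotsc, x_{k-1})$, projecting away from the complementary linear space $\Lambda \coloneqq V(x_0, \dotsc, x_{k-1})$ realizes $R/(W)$ as the homogeneous coordinate ring of the image $X' \coloneqq \pi_{\Lambda}(X) \subseteq \mathbb{P}^{k-1}$, at least in degree two. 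The content to be verified is therefore that $(R/WR)_2 = 0$, i.e.\ that a general linear projection of $X$ to $\mathbb{P}^{k-1}$ is $2$-normal with vanishing degree-two piece — equivalently that the multiplication map $W \otimes R_1 \to R_2$ is surjective for general $W$ of this dimension.

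First I would separate the two cases according to which of $\green(X)$ or $\codim(X)$ achieves the minimum. If $\codim(X) \leqslant \green(X)$, then $k \geqslant \dim(X)+1$, and a general projection away from a linear space of dimension $n - k = \codim(X) - 1 + (\green(X) - \codim(X))$... more cleanly: $k \geqslant 1+\dim X$ forces $\Lambda$ to be a general linear space of dimension $n-k \leqslant \codim(X)-1$, which misses $X$ entirely, so $\pi_\Lambda$ restricts to a finite morphism and the generic projection $X'$ is non-degenerate of dimension $\dim X$ in $\mathbb{P}^{k-1}$; since $k-1 \geqslant \dim X$, a standard generic-projection argument (or Castelnuovo-type general-position lemma) shows $X'$ has no quadrics through it, hence $(R/WR)_2$ has the expected dimension $\binom{k+1}{2}$, forcing $W\otimes R_1 \to R_2$ onto and $W$ to generate $R_2$. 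If instead $\green(X) < \codim(X)$, so $k = n+1-\green(X) > 1 + \dim X$, I would invoke the meaning of the Green--Lazarsfeld index: the hypothesis $\Tor_j^S(R,\mathbb{R})_{2+j} = 0$ for $j \leqslant \green(X)$ says, via the Koszul cohomology interpretation on page~155 of \cite{Eis}, that quotienting $R$ by $\green(X)$ general linear forms preserves the property that the ideal of the image is generated in degree $\geqslant 3$ in the relevant range; iterating this down to a $\green(X)$-fold general hyperplane section shows again that the image of $X$ in $\mathbb{P}^{n - \green(X)} = \mathbb{P}^{k-1}$ carries no quadrics, i.e.\ $(R/WR)_2 = 0$.

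In both cases the conclusion is the same: a general $k$-dimensional basepoint-free $W \subseteq R_1$ generates $R_2$, hence the integer in Theorem~\ref{t:series} is at most $k$, which gives $\py(X) \leqslant k = n+1-\min\{\green(X),\codim(X)\}$. To finish cleanly I would want to note that a general $W$ of this dimension is indeed basepoint-free: when $k > \dim X$ the common zero locus of $k$ general linear forms on $X$ is empty by dimension count, exactly as in the opening reduction of the proof of Theorem~\ref{t:series}.

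The main obstacle is the case $\green(X) < \codim(X)$: making rigorous the passage from the $\Tor$-vanishing statement defining $\green(X)$ to the geometric statement that $\green(X)$ general linear forms kill $R_2$ — i.e.\ that a general linear section reaches a non-degenerate subvariety of $\mathbb{P}^{k-1}$ whose ideal has no quadrics. This is essentially the interpretation of linear syzygies as governing the failure of $2$-normality under generic projection, and one must be careful that ``general hyperplane section'' interacts correctly with minimal free resolutions (the resolution of $R/(\ell)$ for general linear $\ell$ is obtained by tensoring down, so the relevant $\Tor_j$ in degree $2+j$ still vanishes for $j$ up to $\green(X) - (\text{number of sections taken})$). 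Handling the boundary bookkeeping — ensuring the index drops by exactly one per general hyperplane and that we stop precisely when the ideal loses all its quadrics — is the delicate part; the convex-geometry and differential-topology inputs are already packaged in Theorem~\ref{t:series}, so nothing analytic remains.
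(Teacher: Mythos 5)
Your overall strategy --- reduce to Theorem~\ref{t:series} by exhibiting a linear series that generates $R_2$ --- matches the paper's, but there are two genuine problems. First, the hypothesis of Theorem~\ref{t:series} is that \emph{every} basepoint-free linear series $W \subseteq R_1$ of dimension $k$ generates $R_2$; its proof uses this at arbitrary points of the branch locus and at arbitrary fibres of $\varsigma_k$, not merely at generic ones. Establishing the generation property only for a \emph{general} $W$, which is all your projection argument could hope to deliver, therefore does not allow you to invoke Theorem~\ref{t:series}. The paper sidesteps exactly this difficulty by citing Theorem~6 of \cite{BSV17}, which proves the generation property for \emph{any} basepoint-free series of dimension $n+1-k$ under the hypothesis that $I_X$ is generated by quadrics whose first $k-1$ syzygies are linear; that uniform statement is the real content of the corollary and cannot be replaced by a genericity argument.

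Second, your geometric translation is off. The condition that $W$ generates $R_2$ is $W\,R_1 = R_2$, i.e.\ $(R/WR)_2 = 0$, and $R/WR$ is the coordinate ring of the \emph{linear section} $X \cap V(W)$ (an Artinian reduction when $W$ is basepoint-free), not of the linear projection $\pi_{V(W)}(X)$. The projection's coordinate ring is the \emph{subring} of $R$ generated by $W$, and ``no quadrics in the ideal of the projection'' means that $\operatorname{Sym}^2 W \to R_2$ is injective --- a condition of the form $\binom{k+1}{2} \leqslant \dim_{\mathbb{R}} R_2$, essentially opposite in flavour to the surjectivity you need. This confusion surfaces in your first case, where you assert that $(R/WR)_2$ has dimension $\binom{k+1}{2}$ and then conclude that $W \otimes R_1 \to R_2$ is onto; if $(R/WR)_2 = R_2/WR_1$ were nonzero, the map would by definition fail to be surjective. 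Your second case, relating the $\Tor$-vanishing defining $\green(X)$ to the behaviour of quotients by general linear forms, is closer in spirit to how the result of \cite{BSV17} is actually proved, but as written it is a sketch of that external theorem rather than a proof, and it still only treats general rather than arbitrary basepoint-free series.
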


\begin{proof}
  Theorem~6 in \cite{BSV17} shows that any basepoint-free linear series of
  dimension $n+1-k$ generates all of $R_2$ when the homogeneous ideal $I_X$ is
  generated by quadrics and its first $k-1$ syzygies are linear.  Thus, the
  assertion follows immediately from Theorem~\ref{t:series}.
\end{proof}

We also recover Theorem~3.5 in \cite{CPSV}.

\begin{corollary}
  \label{c:acmUp}
  Let $X \subseteq \mathbb{P}^n$ be an irreducible real subvariety such that
  $X(\mathbb{R})$ is non-degenerate.  If $X$ is arithmetically Cohen--Macaulay
  and $\deg X = 2 + \codim X$, then we have $\py(X) \leqslant 2 + \dim(X)$.
\end{corollary}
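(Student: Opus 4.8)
The plan is to deduce the bound from Corollary~\ref{c:gl}. Write $c \coloneqq \codim(X)$ and $d \coloneqq \dim(X)$, so that $n + 1 = c + d + 1$; that corollary gives $\py(X) \leqslant n + 1 - \min\{\green(X),\, c\}$, and hence it suffices to prove that $\green(X) \geqslant c - 1$, since then $\py(X) \leqslant n+1-(c-1) = d+2$.

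First I would determine the $h$\nobreakdash-vector of the Cohen--Macaulay ring $R$. As $X$ is non-degenerate, $h_0 = 1$ and $h_1 = c$, while $\sum_i h_i = \deg(X) = c + 2$, so $\sum_{i \geqslant 2} h_i = 1$; since every $h_i$ is nonnegative, $h_2 \in \{0,1\}$. If $h_2 = 0$, then the Macaulay Characterization Theorem (applied to an Artinian reduction of $R$) forces $h_i = 0$ for all $i \geqslant 2$, whence $\deg(X) = c+1$, contradicting the hypothesis; hence $h_2 = 1$ and $h_i = 0$ for $i \geqslant 3$, so the $h$\nobreakdash-vector of $R$ is the symmetric sequence $(1, c, 1)$. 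I would then appeal to the structure of arithmetically Cohen--Macaulay varieties of almost minimal degree: because $R$ has the symmetric $h$\nobreakdash-vector $(1,c,1)$ and $X$ is irreducible and non-degenerate, $X$ is in fact arithmetically Gorenstein. (Equivalently, a general hyperplane section of $X$ is a set of $c+2$ points in linearly general position in $\mathbb{P}^{c}$; such a configuration is self-associated, so its coordinate ring---and therefore $R$---is Gorenstein.)

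With $X$ arithmetically Gorenstein, the minimal graded free resolution $\mathbf{F}_{\bullet}$ of $R$ over $S$ is self-dual up to a twist and a reversal; comparing Hilbert series (equivalently, using $\omega_R \cong R(1-d)$) the twist is such that $\beta_{i,j}(R) = \beta_{\,c-i,\, c+2-j}(R)$, where $\beta_{i,j}(R) \coloneqq \dim_{\mathbb{R}} \Tor_i^{S}(R, \mathbb{R})_j$. Applying this to the quadratic strand gives $\beta_{k,\,k+2}(R) = \beta_{\,c-k,\, c-k}(R)$. Since $X$ is non-degenerate, $I_X$ contains no linear form, so in a minimal resolution the $\ell$\nobreakdash-th free module in $\mathbf{F}_{\bullet}$ is generated in degrees at least $\ell+1$ for every $\ell \geqslant 1$; in particular $\beta_{\,c-k,\,c-k}(R) = 0$ whenever $c - k \geqslant 1$. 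Therefore $\Tor^S_k(R,\mathbb{R})_{k+2} = \beta_{k,k+2}(R) = 0$ for all $k \leqslant c-1$, while $\beta_{c,\,c+2}(R) = \beta_{0,0}(R) = 1 \neq 0$, so by the definition of the Green--Lazarsfeld index $\green(X) = c - 1$. Consequently $\min\{\green(X), c\} = c-1$, and Corollary~\ref{c:gl} yields $\py(X) \leqslant n+1-(c-1) = \dim(X)+2$, as claimed. The one substantive input is the identification of $X$ as arithmetically Gorenstein---that is, the shape of the minimal free resolution of an arithmetically Cohen--Macaulay variety of almost minimal degree---and I expect this to be the main obstacle; the remaining steps merely unwind definitions and the standard self-duality of Gorenstein resolutions.
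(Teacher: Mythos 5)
Your proposal is correct and follows the same top-level strategy as the paper: both reduce the bound to Corollary~\ref{c:gl} by showing $\green(X) \geqslant \codim(X) - 1$. The difference is in how that inequality is obtained. The paper simply quotes Theorem~4.3 of \cite{HK12}, which states (for non-hypersurfaces) that an arithmetically Cohen--Macaulay variety satisfies $\deg X = 2 + \codim X$ if and only if $\green(X) = \codim(X) - 1$; you instead give a self-contained derivation via the $h$\nobreakdash-vector $(1, c, 1)$, the Gorenstein property, and the self-duality $\beta_{i,j} = \beta_{c-i,\,c+2-j}$ of the resolution, which is a legitimate and arguably more transparent route (and it even absorbs the hypersurface case that the paper handles separately). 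Two small repairs are needed in the one substantive step. First, the bare assertion that a symmetric $h$\nobreakdash-vector forces Gorenstein is false for general Cohen--Macaulay rings, so the weight of the argument really rests on your parenthetical: by Bertini and the uniform position principle (exactly as in the paper's proof of Theorem~\ref{t:acm}), a general linear section of the irreducible non-degenerate $X$ is a set of $c+2$ reduced points in linearly general position in $\mathbb{P}^c$, and since the section is by a regular sequence it suffices to see that these points are arithmetically Gorenstein. Second, ``self-associated'' is the wrong classical fact here --- self-association concerns $2c+2$ points in $\mathbb{P}^c$, not $c+2$. The correct justification is either that $c+2$ points in linearly general position form a projective frame, hence are projectively equivalent to the coordinate points together with $[1:1:\dotsb:1]$, for which the degree-one pairing $A_1 \times A_1 \to A_2$ of the Artinian reduction is readily checked to be nondegenerate, or an appeal to the Davis--Geramita--Orecchia criterion (symmetric $h$\nobreakdash-vector plus the Cayley--Bacharach property, the latter holding because any $c+1$ of the points already span $\mathbb{P}^c$). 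With those justifications substituted, your argument is complete.
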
 

\begin{proof} 
  When $X$ is hypersurface, the statement is trivial. If $X$ is not a
  hypersurface, then Theorem~4.3 in \cite{HK12} shows that $X$ is an
  arithmetically Cohen--Macaulay variety such that $\deg X = 2 + \codim X$ if
  and only if $\green(X) = \codim(X) - 1$.  Hence, Corollary~\ref{c:gl}
  establishes that $\py(X) \leqslant 2 + \dim(X)$.
\end{proof}

\subsection*{Upper bounds via embeddings}

Our third bound comes from embeddings into a special type of variety.  We
start with an elementary inequality among Pythagoras numbers.

\begin{lemma} 
  \label{l:inclPy}
  An inclusion of real subvarieties $X \subseteq X' \subseteq \mathbb{P}^n$
  produces the inequality $\py(X) \leqslant \py(X')$.
\end{lemma}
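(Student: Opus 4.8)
The plan is to push sum-of-squares representations through the surjection of homogeneous coordinate rings induced by the inclusion. The inclusion $X \subseteq X'$ of real subvarieties forces the reverse containment $I_{X'} \subseteq I_X$ of saturated homogeneous ideals, because every form vanishing on $X'$ also vanishes on the smaller set $X$. Writing $R \coloneqq S/I_X$ and $R' \coloneqq S/I_{X'}$, this produces a graded surjective ring homomorphism $\varphi \colon R' \to R$; in particular its degree-one and degree-two components are surjective $\mathbb{R}$-linear maps $\varphi_1 \colon R'_1 \to R_1$ and $\varphi_2 \colon R'_2 \to R_2$, and, since $\varphi$ is multiplicative, $\varphi_2(g^2) = \varphi_1(g)^2$ for all $g \in R'_1$.

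First I would show that every element of $\Sos_X$ lifts along $\varphi_2$ to an element of $\Sos_{X'}$. Given $f \in \Sos_X$, choose a representation $f = \bar g_1^2 + \bar g_2^2 + \dotsb + \bar g_r^2$ with each $\bar g_i \in R_1$, use the surjectivity of $\varphi_1$ to select preimages $g_i \in R'_1$ with $\varphi_1(g_i) = \bar g_i$, and set $\tilde f \coloneqq g_1^2 + g_2^2 + \dotsb + g_r^2 \in \Sos_{X'}$. The multiplicativity of $\varphi$ then gives $\varphi_2(\tilde f) = f$.

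Next I would apply the definition of the Pythagoras number to $\tilde f \in \Sos_{X'}$: there exist $h_1, h_2, \dotsc, h_s \in R'_1$ with $s \leqslant \py(X')$ such that $\tilde f = h_1^2 + h_2^2 + \dotsb + h_s^2$. Applying $\varphi_2$ yields $f = \varphi_1(h_1)^2 + \varphi_1(h_2)^2 + \dotsb + \varphi_1(h_s)^2$, exhibiting $f$ as a sum of at most $\py(X')$ squares of linear forms in $R$. Since $f \in \Sos_X$ was arbitrary, this gives $\py(X) \leqslant \py(X')$.

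I do not expect any genuine obstacle here; the only points that deserve a moment's care are the direction of the ideal containment (a smaller variety has a larger ideal) and the compatibility of the quotient maps with squaring, which is automatic because they are ring homomorphisms. Should one worry about finiteness, the Gram-matrix reasoning used in the proof of Theorem~\ref{t:convex} already bounds $\py(X')$ by $\dim_{\mathbb{R}} R'_1 \leqslant n+1$.
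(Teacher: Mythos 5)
Your proposal is correct and follows essentially the same route as the paper: both arguments use the graded surjection $\varphi \colon R' \to R$ induced by the reverse containment of ideals, observe that $\Sos_X = \varphi(\Sos_{X'})$ by lifting linear forms, and then push a short sum-of-squares representation of the lift back down through $\varphi$. Your write-up just makes the lifting step and the degree bookkeeping more explicit than the paper's terser version.
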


\begin{proof}
  Let $R' \coloneqq \mathbb{R}[x_0, x_1, \dotsc, x_n]/I_{X'}$ denote the
  homogenenous coordinate ring of $X'$ in $\mathbb{P}^n$.  The inclusion
  $X \subseteq X'$ corresponds to an $\mathbb{N}$\nobreakdash-graded
  surjective ring homomorphism $\varphi \colon R' \to R$, so every square in
  $R$ is the image of a square in $R'$ and $\Sos_{X} = \varphi(\Sos_{X'})$.
  If an element $f \in \Sos_X$ satisfies $f = \varphi(f')$ for some
  $f' \in \Sos_{X'}$ and $f'$ can be expressed as a sum of $k$ squares, then
  we obtain an expression for $g$ involving at most $k$ squares by applying
  $\varphi$.  It follows that $\py(X) \leqslant \py(X')$.
\end{proof}

To capitalize on this lemma, we need to know the Pythagoras numbers for a
class of subvarieties. With this in mind, a subvariety
$X' \subseteq \mathbb{P}^n$ is \define{$2$\nobreakdash-regular} (in the sense
of Castelnuovo--Mumford) if its homogeneous ideal $I_{X'}$ is generated by
quadratic polynomials and all the maps in its minimal free resolution are
represented by matrices of linear forms or, equivalently,
$\green(X') = \infty$; see Section~4A in \cite{Eis}.  Fortuitously,
Corollary~32 in \cite{BSV17} shows that the Pythagoras number for any
totally-real $2$\nobreakdash-regular subvariety $X'$ is $1 + \dim(X')$.
Motivated by this, our third bound revolves around embeddings into
$2$\nobreakdash-regular subvarieties.

\begin{theorem}
  \label{t:2reg} 
  For any real subvariety $X \subseteq \mathbb{P}^n$ such that $X(\mathbb{R})$
  is non-degenerate, its Pythagoras number $\py(X)$ is at most one more than
  the minimum dimension of any real $2$\nobreakdash-regular variety that
  contains it.
\end{theorem}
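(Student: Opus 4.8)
The plan is to combine the two tools we have already assembled: Lemma~\ref{l:inclPy}, which says that Pythagoras numbers are monotone under inclusions of real subvarieties of $\mathbb{P}^n$, and the fact quoted from Corollary~32 in \cite{BSV17} that a totally-real $2$-regular subvariety $X'$ has $\py(X') = 1 + \dim(X')$. The one gap to bridge is that the smallest $2$-regular variety $X'$ containing $X$ need not obviously be totally-real, so I cannot apply the \cite{BSV17} formula to it directly. I would therefore first observe that we may reduce to a $2$-regular $X'$ whose real points are non-degenerate, and in fact to one that is totally-real.

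Here are the steps in order. First, let $X' \subseteq \mathbb{P}^n$ be any real $2$-regular variety with $X \subseteq X'$; such a variety exists because $\mathbb{P}^n$ itself is $2$-regular (its ideal is zero). Second, I would argue that without loss of generality $X'$ is non-degenerate: since $X(\mathbb{R})$ is non-degenerate by hypothesis and $X \subseteq X'$, the real points $X'(\mathbb{R})$ are also non-degenerate, so $X'$ spans $\mathbb{P}^n$. Third, I would recall the structure theory of $2$-regular varieties (Eisenbud--Green--Hulek--Popescu, as in Section~4A of \cite{Eis}): the irreducible components of a $2$-regular variety are themselves varieties of minimal degree, and over $\mathbb{C}$ such $X'$ is a union of linear subspaces and (cones over) rational normal scrolls and Veronese surfaces glued in a particularly controlled, ``linearly joined'' way. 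The key point I need from this classification is that a non-degenerate $2$-regular $X'$ whose real locus is Zariski dense is automatically totally-real---equivalently, that every irreducible component of $X'$ has a smooth real point. This should follow because each component, being of minimal degree, is a real variety of minimal degree spanning its own (real) linear span, and real varieties of minimal degree always carry smooth real points; one then checks that the linearly-joined gluing does not destroy this.

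With that in hand, the proof is immediate: for any such $X'$ we get $\py(X) \leqslant \py(X') = 1 + \dim(X')$ by Lemma~\ref{l:inclPy} and Corollary~32 in \cite{BSV17}, and taking the minimum over all real $2$-regular $X' \supseteq X$ yields $\py(X) \leqslant 1 + \min\{\dim(X') : X' \text{ real } 2\text{-regular}, X \subseteq X'\}$, which is the claim. The main obstacle is the third step: verifying that a non-degenerate $2$-regular real variety with dense real points must be totally-real. If the classification does not hand this over cleanly, a safe alternative is to bypass totally-realness entirely and instead re-prove the $\py(X') = 1 + \dim(X')$ bound for $2$-regular $X'$ under only the weaker hypothesis that $X'(\mathbb{R})$ is non-degenerate---for instance by invoking Corollary~\ref{c:gl} with $\green(X') = \infty$, which gives $\py(X') \leqslant n + 1 - \min\{\infty, \codim(X')\} = n + 1 - \codim(X') = 1 + \dim(X')$ directly, with no appeal to \cite{BSV17} and no totally-real hypothesis needed at all. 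Indeed this last route makes the whole argument self-contained within the excerpt, so I would present the proof that way: take $X'$ real $2$-regular containing $X$, note $X'(\mathbb{R})$ is non-degenerate, apply Corollary~\ref{c:gl} to bound $\py(X') \leqslant 1 + \dim(X')$, then apply Lemma~\ref{l:inclPy} and minimize over $X'$.
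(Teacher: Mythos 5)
Your final route---observing that $X'(\mathbb{R})$ is non-degenerate because it contains $X(\mathbb{R})$, applying Corollary~\ref{c:gl} with $\green(X') = \infty$ to get $\py(X') \leqslant 1 + \dim(X')$, and finishing with Lemma~\ref{l:inclPy}---is exactly the paper's proof. The earlier detour through Corollary~32 of \cite{BSV17} and the totally-real question is unnecessary, as you correctly recognized.
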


\begin{proof}
  Let $X'$ be a real $2$\nobreakdash-regular variety such that
  $X \subseteq X' \subseteq \mathbb{P}^n$.  Since $\green(X') = \infty$,
  Corollary~\ref{c:gl} gives
  $\py(X') \leqslant n + 1 - \codim X' = 1 + \dim(X')$ and
  Lemma~\ref{l:inclPy} completes the proof.
\end{proof}

\begin{proof}[Proof of Theorem~\ref{t:1}]
  Theorem~\ref{t:convex} proves the first part,
  Corollary~\ref{c:gl} proves the second, and
  Theorem~\ref{t:2reg} proves the third.
\end{proof}

For irreducible subvarieties, we can improve this bound.  A projective
subvariety $X' \subset \mathbb{P}^n$ has \define{minimal degree} if it is
non-degenerate and $\deg X' = 1 + \codim X'$. Theorem~0.4 in \cite{EGHP6}
gives a complete classification of $2$\nobreakdash-regular varieties: the
irreducible components are varieties of minimal degree that meet in a
particularly simple way.  Therefore, to bound the Pythagoras number of an
irreducible subvariety, one need only consider the varieties of minimal degree
that contain it.  Moreover, the Del~Pezzo--Bertini
Theorem~\cite{EH}*{Theorem~1} proves that an irreducible variety of minimal
degree is either a quadric hypersurface, a rational normal scroll, or a cone
over the Veronese surface $\mathbb{P}^2 \subset \mathbb{P}^5$.  Concentrating
on just the rational normal scrolls that contain an irreducible variety
produces the next bound.  As in Section~6C in \cite{Eis}, a projective
subvariety $X \subseteq \mathbb{P}^n$ is \define{linearly normal} if the
canonical map
$H^0 \bigl( \mathbb{P}^n, \mathcal{O}_{\mathbb{P}^n}(1) \bigr) \to H^0 \bigl(
X, \mathcal{O}_{X}(1) \bigr)$ is surjective

\begin{corollary}
  \label{c:scroll} 
  Let $X \subseteq \mathbb{P}^n$ be a non-degenerate irreducible real
  subvariety.  If $X$ is linearly normal, then the Pythagoras number $\py(X)$
  is at most
  \[
    \min \left\{ n + 2 - \dim_{\mathbb{R}} H^0 \bigl(X, \mathcal{O}_X(1) \otimes
      \mathcal{L}^{-1} \bigr) \; \middle| \; \text{
        \begin{minipage}[c]{5.1cm}
          \raggedright
          $\mathcal{L}$ is a real line bundle on $X$ such that
          $\dim_{\mathbb{R}} H^0(X, \mathcal{L}) \geqslant 2$
        \end{minipage}
      } \right\} \, .
  \]
\end{corollary}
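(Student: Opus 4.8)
The plan is to deduce the corollary from Theorem~\ref{t:2reg} together with the classical principle that a linearly normal variety carrying a pencil lies on a rational normal scroll. Fix a real line bundle $\mathcal{L}$ on $X$ with $\dim_{\mathbb{R}} H^0(X, \mathcal{L}) \geqslant 2$, set $\mathcal{M} \coloneqq \mathcal{O}_X(1) \otimes \mathcal{L}^{-1}$, and write $m \coloneqq \dim_{\mathbb{R}} H^0(X, \mathcal{M})$. When $m \leqslant 1$ the asserted inequality is weaker than the trivial bound $\py(X) \leqslant n + 1$ (every quadratic form in $S_2$ has rank at most $n+1$), so we may assume $m \geqslant 2$. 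By Theorem~\ref{t:2reg} it is enough to exhibit a real $2$\nobreakdash-regular variety $X'$ with $X \subseteq X' \subseteq \mathbb{P}^n$ and $\dim X' = n + 1 - m$.

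To construct $X'$, choose a two-dimensional real subspace $W = \Span_{\mathbb{R}}\{w_0, w_1\} \subseteq H^0(X, \mathcal{L})$ and consider the pencil of effective divisors $D_t \coloneqq \operatorname{div}(s_0 w_0 + s_1 w_1) \in |\mathcal{L}|$ indexed by $t = [s_0 : s_1] \in \mathbb{P}^1$. Linear normality identifies $S_1 = H^0(\mathbb{P}^n, \mathcal{O}_{\mathbb{P}^n}(1))$ with $H^0(X, \mathcal{O}_X(1))$ as real vector spaces, and under this identification a hyperplane of $\mathbb{P}^n$ contains $D_t$ exactly when the corresponding section of $\mathcal{O}_X(1)$ is divisible by $s_0 w_0 + s_1 w_1$. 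Since $X$ is integral, multiplication by $s_0 w_0 + s_1 w_1$ injects $H^0(X, \mathcal{M})$ into $H^0(X, \mathcal{O}_X(1))$, so the linear span $\langle D_t\rangle \subseteq \mathbb{P}^n$ has codimension exactly $m$. Let $X' \coloneqq \bigcup_{t \in \mathbb{P}^1} \langle D_t \rangle$ be the image in $\mathbb{P}^n$ of the incidence locus $\{(p, t) : p \in \langle D_t\rangle\} \subseteq \mathbb{P}^n \times \mathbb{P}^1$. Then $X'$ is closed (the incidence locus is projective), and it contains $X$ because every point of $X$ lies on some member of the pencil. As the incidence locus is a $\mathbb{P}^{n-m}$\nobreakdash-bundle over $\mathbb{P}^1$ we get $\dim X' \leqslant n + 1 - m$, and equality holds: if the spans $\langle D_t\rangle$ all lay in a common linear subspace of dimension $n - m$ they would coincide, forcing $X$ into that subspace and contradicting non-degeneracy. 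Everything here is defined over $\mathbb{R}$.

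It remains to identify $X'$ as a $2$\nobreakdash-regular variety. Fixing a real basis $\nu_1, \dots, \nu_m$ of $H^0(X, \mathcal{M})$ and reading each product $w_i \nu_j$ as an element of $H^0(X, \mathcal{O}_X(1)) = S_1$, one checks (by inspecting the matrix of scalars obtained after evaluating at a point of $X$) that $X'$ is precisely the locus where the $2 \times m$ matrix of linear forms $(w_i \nu_j)$ has rank at most one. Because this determinantal locus has the expected codimension $m - 1$, the Thom--Porteous formula yields $\deg X' = m = 1 + \codim X'$; as $X'$ is non-degenerate, it has minimal degree, so it is $2$\nobreakdash-regular by the Del~Pezzo--Bertini Theorem~\cite{EH}*{Theorem~1} and \cite{EGHP6}*{Theorem~0.4} --- indeed $X'$ is a rational normal scroll, possibly a cone over one. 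Theorem~\ref{t:2reg} now gives $\py(X) \leqslant 1 + \dim X' = n + 2 - m$, and minimizing over all admissible real line bundles $\mathcal{L}$ proves the corollary.

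I expect the main obstacle to be justifying that the $2 \times m$ matrix above has degeneracy locus of codimension exactly $m - 1$, so that $X'$ is an honest variety of minimal degree of the predicted dimension rather than a scheme with extraneous or embedded components. The clean way to handle this is to first strip off the fixed divisorial part $F$ of the pencil $|W|$, replacing $\mathcal{L}$ by the real line bundle $\mathcal{L}(-F)$: this leaves $\dim_{\mathbb{R}} H^0$ of the pencil unchanged but can only enlarge $\dim_{\mathbb{R}} H^0(X, \mathcal{O}_X(1) \otimes \mathcal{L}(-F)^{-1})$, so the resulting bound is at least as strong, and for a pencil with no fixed component the structure theory of rational normal scrolls applies directly.
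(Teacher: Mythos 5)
Your argument is correct and follows the same route as the paper: reduce to Theorem~\ref{t:2reg} by producing, from a real pencil in $\abs{\mathcal{L}}$, a rational normal scroll $X' \supseteq X$ of dimension $n + 1 - \dim_{\mathbb{R}} H^0\bigl(X, \mathcal{O}_X(1) \otimes \mathcal{L}^{-1}\bigr)$. The only difference is that the paper delegates this construction to Paragraph~2.2 of \cite{Sch86}, whereas you carry it out explicitly via the spans of the divisors $D_t$ and the $2 \times m$ determinantal description.
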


\begin{proof}
  In light of Theorem~\ref{t:2reg}, it suffices to prove that the dimension of
  a rational normal scroll containing $X$ equals
  $n + 1 - \dim_{\mathbb{R}} H^0 \bigl(X, \mathcal{O}_X(1) \otimes
  \mathcal{L}^{-1} \bigr)$ for some real line bundle $\mathcal{L}$ on $X$
  satisfying $\dim_{\mathbb{R}} H^0(X, \mathcal{L}) \geqslant 2$.
  Paragraph~2.2 in \cite{Sch86} indicates that we can construct from any
  pencil of divisors in $\abs{\mathcal{L}}$ on $X$ satisfying
  $\dim_{\mathbb{R}} H^0(X, \mathcal{O}_X(1) \otimes \mathcal{L}^{-1} \bigr)
  \geqslant 2$ a rational normal scroll $X' \subseteq \mathbb{P}^{n}$ which
  contains $X$.
  Since $X'$ is variety of minimal degree and
  $\deg(X') = \dim_{\mathbb{R}} H^0 \bigl( X, \mathcal{O}_X(1) \otimes
  \mathcal{L}^{-1} \bigr)$, it follows that
  $\dim(X') = n + 1 - \dim_{\mathbb{R}} H^0\bigl(X, \mathcal{O}_X(1) \otimes
  \mathcal{L}^{-1} \bigr)$.
\end{proof}

To illustrate this corollary, we specialize to curves whose hyperplane section
is non-special.  Emulating the definition in Section~8C in \cite{Eis}, the
\define{real gonality} of a real curve is the lowest degree of a real
non-constant morphism from the curve to the real projective line. In
particular, the real gonality of a real curve $X \subseteq \mathbb{P}^n$ is at
least the gonality of its complexification
$X \times_{\operatorname{Spec}(\mathbb{R})} \operatorname{Spec}(\mathbb{C})$.

\begin{corollary}
  \label{c:gon}
  Let $X \subset \mathbb{P}^n$ be a linearly-normal irreducible non-singular
  real curve of genus $g$ and real gonality $\delta$.  If $X$ has degree at
  least $2g - 1 + \delta$, then we have $\py(X) \leqslant 1 + \delta$.
\end{corollary}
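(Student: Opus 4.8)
The plan is to deduce the bound from Corollary~\ref{c:scroll} by feeding it the pencil that computes the real gonality. As throughout this section, we assume $X(\mathbb{R})$ is non-degenerate, so $X$ is totally real and the hypotheses of Corollary~\ref{c:scroll} are met; it therefore suffices to produce a real line bundle $\mathcal{L}$ on $X$ with $\dim_{\mathbb{R}} H^0(X,\mathcal{L}) \geqslant 2$ for which $n + 2 - \dim_{\mathbb{R}} H^0\bigl(X, \mathcal{O}_X(1) \otimes \mathcal{L}^{-1}\bigr) \leqslant 1 + \delta$.

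Let $\phi \colon X \to \mathbb{P}^1$ be a non-constant real morphism of degree $\delta$ and put $\mathcal{L} \coloneqq \phi^{*}\mathcal{O}_{\mathbb{P}^1}(1)$, a real line bundle of degree $\delta$ on $X$. Pulling back a basis of $H^0\bigl(\mathbb{P}^1, \mathcal{O}_{\mathbb{P}^1}(1)\bigr)$ shows $\dim_{\mathbb{R}} H^0(X, \mathcal{L}) \geqslant 2$, so $\mathcal{L}$ is a legitimate choice. Write $d \coloneqq \deg X$ and $\mathcal{M} \coloneqq \mathcal{O}_X(1) \otimes \mathcal{L}^{-1}$, which has degree $d - \delta$. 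The key observation is that the hypothesis $d \geqslant 2g - 1 + \delta$ says precisely that $\deg \mathcal{M} > 2g - 2$, so $\mathcal{M}$ is non-special; since also $\delta \geqslant 1$, the line bundle $\mathcal{O}_X(1)$ of degree $d > 2g - 2$ is non-special too.

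Now I would invoke Riemann--Roch twice. From non-speciality of $\mathcal{O}_X(1)$ we get $\dim_{\mathbb{R}} H^0(X, \mathcal{O}_X(1)) = d - g + 1$, and linear normality together with non-degeneracy in $\mathbb{P}^n$ identifies this with $n + 1$; hence $n = d - g$. From non-speciality of $\mathcal{M}$ we get $\dim_{\mathbb{R}} H^0(X, \mathcal{M}) = (d - \delta) - g + 1 = n + 1 - \delta$. Substituting into Corollary~\ref{c:scroll} gives
\[
  \py(X) \;\leqslant\; n + 2 - \dim_{\mathbb{R}} H^0\bigl(X, \mathcal{O}_X(1) \otimes \mathcal{L}^{-1}\bigr) \;=\; n + 2 - (n + 1 - \delta) \;=\; 1 + \delta \, .
\]

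Once Corollary~\ref{c:scroll} is available, this is little more than bookkeeping, and I anticipate no real difficulty; the point worth isolating is that the degree hypothesis is exactly the non-speciality threshold for $\mathcal{O}_X(1) \otimes \mathcal{L}^{-1}$, which is what makes both Riemann--Roch computations collapse so cleanly. The one borderline case meriting a remark is when $\dim_{\mathbb{R}} H^0(X, \mathcal{M}) = n + 1 - \delta$ drops to $1$, so that the rational normal scroll construction underlying Corollary~\ref{c:scroll} degenerates: then $\delta = n$, and $\py(X) \leqslant n + 1 = 1 + \delta$ holds anyway since every quadratic form in $n + 1$ variables has rank at most $n + 1$, so the conclusion still stands.
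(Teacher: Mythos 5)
Your proposal is correct and follows essentially the same route as the paper: both feed the gonality pencil $\mathcal{L} = \varpi^*\mathcal{O}_{\mathbb{P}^1}(1)$ into Corollary~\ref{c:scroll} and use Riemann--Roch, observing that the degree hypothesis makes $\mathcal{O}_X(1)\otimes\mathcal{L}^{-1}$ (and $\mathcal{O}_X(1)$ itself) non-special so that $\dim_{\mathbb{R}} H^0\bigl(X,\mathcal{O}_X(1)\otimes\mathcal{L}^{-1}\bigr) = n+1-\delta$. The only cosmetic differences are that the paper phrases the computation in terms of divisors $H$, $K$, $D$ and verifies $\dim_{\mathbb{R}} H^0\bigl(X,\mathcal{O}_X(D)\bigr)=2$ exactly, neither of which changes the argument.
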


\begin{proof}
  Since the real gonality of $X$ is $\delta$, there is a non-constant morphism
  $\varpi \colon X \to \mathbb{P}^1$ of schemes over $\mathbb{R}$ having
  degree $\delta$.  Fix a real divisor $D$ in the complete linear series of
  the real line bundle $\varpi^* \bigl( \mathcal{O}_{\mathbb{P}^1}(1) \bigr)$.
  We must have $\dim_{\mathbb{R}} H^0 \bigl( X, \mathcal{O}_{X}(D) \bigr) = 2$
  because otherwise there would be a real point $Q \in X$ such that
  $\dim_{\mathbb{R}} H^0 \bigl( X, \mathcal{O}_{X}(D-Q) \bigr) \geqslant 2$
  and the line bundle $\mathcal{O}_X(D-Q)$ would define a real morphism to
  $\mathbb{P}^1$ of smaller degree.  Let $H$ be a hyperplane section of $X$
  and let $K$ be the canonical divisor on $X$.  It follows that
  $\deg (H) \geqslant 2g - 1 + \delta$ and $\deg (K) = 2 g - 2$, so
  $\deg (K-H+D) < 0$ and $\deg (K-H) < 0$.  As
  $\dim_{\mathbb{R}} H^0 \bigl( X, \mathcal{O}_X(K-H+D) \bigr) = 0 =
  \dim_{\mathbb{R}} H^0 \bigl( X, \mathcal{O}_X(K-H) \bigr)$, the
  Riemann--Roch Theorem shows that
  $\dim_{\mathbb{R}} H^0 \bigl( X, \mathcal{O}_X(H-D) \bigr) = \deg(H) -
  \delta + 1 - g$ and
  \[
    n+1 = \dim_{\mathbb{R}} H^0 \bigl( X, \mathcal{O}_X(H) \bigr) = \deg(H) +
    1 - g \, .
  \]
  Therefore, Corollary~\ref{c:scroll} establishes that
  $\py(X) \leqslant n+2 - \dim_{\mathbb{R}} H^0 \bigl( X, \mathcal{O}_X(H-D)
  \bigr) = 1 + \delta$.
\end{proof}

For a canonical curve (a non-hyperelliptic smooth curve of genus $g$ at least
$3$ embedded by its canonical linear series), we get a slightly better bound.

\begin{corollary}
  \label{c:canon}
  If $X \subset \mathbb{P}^{g-1}$ is a canonical real curve of real gonality
  $\delta$, then we have $\py(X) \leqslant \delta$.
\end{corollary}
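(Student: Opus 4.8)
The plan is to follow the proof of Corollary~\ref{c:gon} almost verbatim, feeding a gonality pencil into Corollary~\ref{c:scroll}; the improvement from $1+\delta$ to $\delta$ comes entirely from the Riemann--Roch bookkeeping, which is cleaner here because the hyperplane bundle of a canonical curve is the canonical bundle itself. First I would record the numerology of the canonical embedding: for $X \subset \mathbb{P}^{g-1}$ the hyperplane class is the canonical class $K_X$, so $\deg(K_X) = 2g - 2$ and $\dim_{\mathbb{R}} H^0\bigl(X, \mathcal{O}_X(1)\bigr) = \dim_{\mathbb{R}} H^0\bigl(X, \mathcal{O}_X(K_X)\bigr) = g$, whence $n = g - 1$. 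Being embedded by the complete canonical series, $X$ is linearly normal, and it is non-degenerate (as $g \geqslant 3$ and $X$ is non-hyperelliptic) and non-singular, so the hypotheses of Corollary~\ref{c:scroll} are in force.

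Next, since $X$ has real gonality $\delta$, there is a non-constant real morphism $\varpi \colon X \to \mathbb{P}^1$ of degree $\delta$. I would set $\mathcal{L} \coloneqq \varpi^* \bigl( \mathcal{O}_{\mathbb{P}^1}(1) \bigr)$, a real line bundle of degree $\delta$; the pullbacks of the two coordinate sections on $\mathbb{P}^1$ are linearly independent, so $\dim_{\mathbb{R}} H^0(X, \mathcal{L}) \geqslant 2$ and $\mathcal{L}$ is an admissible choice in Corollary~\ref{c:scroll}. To evaluate the resulting bound I would compute $\dim_{\mathbb{R}} H^0\bigl(X, \mathcal{O}_X(1) \otimes \mathcal{L}^{-1}\bigr) = \dim_{\mathbb{R}} H^0\bigl(X, \mathcal{O}_X(K_X) \otimes \mathcal{L}^{-1}\bigr)$ by combining Serre duality, which identifies this quantity with $\dim_{\mathbb{R}} H^1(X, \mathcal{L})$, and the Riemann--Roch Theorem, which gives $\dim_{\mathbb{R}} H^0(X, \mathcal{L}) - \dim_{\mathbb{R}} H^1(X, \mathcal{L}) = \deg(\mathcal{L}) + 1 - g = \delta + 1 - g$. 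Hence $\dim_{\mathbb{R}} H^0\bigl(X, \mathcal{O}_X(K_X) \otimes \mathcal{L}^{-1}\bigr) = \dim_{\mathbb{R}} H^0(X, \mathcal{L}) - \delta - 1 + g \geqslant g + 1 - \delta$, and Corollary~\ref{c:scroll} yields
\[
  \py(X) \leqslant n + 2 - \dim_{\mathbb{R}} H^0\bigl(X, \mathcal{O}_X(1) \otimes \mathcal{L}^{-1}\bigr) \leqslant (g - 1) + 2 - (g + 1 - \delta) = \delta \, .
\]

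There is no genuine obstacle here; the argument is a short deduction from Corollary~\ref{c:scroll} once the cohomological dictionary is set up. The only subtlety worth flagging is the contrast with Corollary~\ref{c:gon}: there one imposes the degree hypothesis $\deg(H) \geqslant 2g - 1 + \delta$ precisely to force the auxiliary groups $H^0\bigl(X, \mathcal{O}_X(K - H)\bigr)$ and $H^0\bigl(X, \mathcal{O}_X(K - H + D)\bigr)$ to vanish, which costs a unit; for a canonical curve $H = K_X$ is special, so these groups are instead pinned down exactly by Riemann--Roch, which is what removes the superfluous $+1$. One should also observe that only the inequality $\dim_{\mathbb{R}} H^0(X, \mathcal{L}) \geqslant 2$ is used, not the sharper equality $\dim_{\mathbb{R}} H^0(X, \mathcal{L}) = 2$ coming from minimality of $\delta$, so no separate bound on $h^0(\mathcal{L})$ is needed.
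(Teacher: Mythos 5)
Your proposal is correct and follows essentially the same route as the paper: both feed the gonality pencil $\mathcal{L}$ into Corollary~\ref{c:scroll} and compute $\dim_{\mathbb{R}} H^0\bigl(X, \mathcal{O}_X(K)\otimes\mathcal{L}^{-1}\bigr) = g+1-\delta$ by Riemann--Roch, the only cosmetic difference being that the paper uses the equality $\dim_{\mathbb{R}} H^0(X,\mathcal{L}) = 2$ (from minimality of the gonality) while you only need the inequality $\geqslant 2$, which suffices since a larger value only improves the bound.
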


\begin{proof}
  Just as in the proof of Corollary~\ref{c:gon}, let $D$ denote a real divisor
  on the curve $X$ of degree $\delta$ such that
  $\dim_{\mathbb{R}} H^0 \bigl( X, \mathcal{O}_{X}(D) \bigr) = 2$.  Since the
  canonical divisor $K$ on $X$ corresponds to a hyperplane section, the
  Riemann--Roch Theorem shows that
  \[
    \dim_{\mathbb{R}} H^0 \bigl( X, \mathcal{O}_X(K-D) \bigr) = - \deg(D) -1 +
    g + \dim_{\mathbb{R}} H^0 \bigl( X, \mathcal{O}_X(D) \bigr) = g + 1 -
    \delta \, .
  \]
  Thus, Corollary~\ref{c:scroll} demonstrates that
  $\py(X) \leqslant (g - 1) + 2 - (g + 1 - \delta) = \delta$.
\end{proof}

For general real canonical curves, we can compare our three bounds.

\begin{example}[Bounds for general canonical curves]
  \label{e:genCan}
  Suppose that $X \subset \mathbb{P}^{g-1}$ is a general real canonical curve
  and let $K$ denote its canonical divisor.  Since $\deg K = 2g-2$, the
  Riemann--Roch Theorem shows that
  $\dim_{\mathbb{R}} H^0 \bigl(X, \mathcal{O}_X(2K) \bigr) = 2(2g-2)+1-g =
  3g-3$, so the first bound derived from Theorem~\ref{t:convex} is
  \[
    \py(X) \leqslant \bigl\lfloor \tfrac{1}{2} \bigl( \sqrt{24g-23} -1 \bigr)
    \bigr\rfloor \leqslant \bigl\lfloor \sqrt{6g} \bigr\rfloor \, .
  \]
  Green's Conjecture~\cite{Eis}*{Conjecture~9.6} asserts that
  $\green(X) \leqslant \left\lfloor \frac{1}{2}(g-1) \right\rfloor - 1$ and it
  is known to hold for general curves~\cite{Voi}.  Thus, the second bound
  obtained from Corollary~\ref{c:gl} is
  \[
    \py(X) \leqslant g - \left\lfloor \tfrac{1}{2}(g-1) \right\rfloor + 1 =
    \left\lfloor \tfrac{1}{2}(g+4) \right\rfloor \, .
  \]
  Lastly, the Brill--Noether Theorem~\cite{Eis}*{Theorem~8.16} implies that
  the (complex) gonality of a general curve is
  $\left\lfloor \frac{1}{2}(g+3) \right\rfloor$, so the third bound from
  Corollary~\ref{c:canon} is at best
  $\py(X) \leqslant \left\lfloor \frac{1}{2}(g+3) \right\rfloor$.  In
  particular, for all sufficiently large $g$, the first bound is stronger than
  the other two bounds.
\end{example}

\subsection*{Specific bounds for graphs}

Restricting our attention to certain unions of coordinate spaces allows us to
compare our three bounds on the Pythagoras number.  We focus on varieties
defined by the Stanley--Reisner ideal of the clique complex of a graph or,
equivalently, the edge ideal of the complementary graph.  Remarkably, all
three bounds have explicit formulations in terms of well-known numerical
invariants of the underlying graph.

To be more precise, let $G$ be a graph (with no multiple edges or loops) whose
vertex set is $\{0, 1, \dotsc, n\}$.  The homogeneous ideal $I_G$ in
$S \coloneqq \mathbb{R}[x_0, x_1, \dotsc, x_n]$ is generated by the quadratic
monomials $x_i \, x_{\!j}$ for every pair $\{i, j \}$ of distinct vertices that
do not form an edge in the graph $G$; see Section~9.1 in \cite{HH}. The
associated subvariety is
$X_G \coloneqq \operatorname{V}(I_{G}) \subseteq \mathbb{P}^n$ and its
homogeneous coordinate ring is
$R_G \coloneqq \mathbb{R}[x_0, x_1, \dotsc, x_n]/I_G$.  If the graph $G$ also
has $m$ edges, then the definition of the ideal $I_G$ implies that
$\dim_{\mathbb{R}} (R_G)_2 = \tbinom{n+2}{2} - \tbinom{n+1}{2} + m = n + m +
1$.  Hence, the first bound derived from Theorem~\ref{t:convex} is
\[
  \py(X_G) \leqslant \bigl\lfloor \tfrac{1}{2} \bigl( \sqrt{8n+8m+9} -1 \bigr)
  \bigr\rfloor \, .
\]
Using `Gram dimension' as a synonym for the Pythagoras number, this bound also
appears in the introduction to \cite{LV}.

For the second bound, we translate both the Green--Lazarsfeld index and the
dimension of $X_G$ into numerical graph invariants.  To do this for the index
$\green(X_G)$, recall that a \define{cycle} in the graph $G$ of length
$m \geqslant 3$, as defined in Section~1.3 of \cite{Die}, is determined by a
sequence of distinct vertices $v_0, v_1, \dotsc, v_{m-1}$ such that each of
the pairs
$\{v_0,v_1\}, \{v_1,v_2\}, \dotsc, \{v_{m-2}, v_{m-1}\}, \{v_{m-1}, v_{0}\}$
is an edge in the graph.  An edge that joins two vertices of a cycle but is
not itself an edge of the cycle is called a chord and an \define{induced
  cycle} has no chords.  Theorem~2.1 in \cite{EGHP5} proves that the
Green--Lazarsfeld index $\green(X_G)$ is $3$ less than minimal length of an
induced cycle in $G$ having length at least $4$.  To reinterpret $\dim(X_G)$,
recall that a clique in the graph $G$ is a subset of vertices such that every
pair of distinct vertices forms an edge and the \define{clique number}
$\omega(G)$ is the number of vertices in a maximum clique; see Section~5.5 in
\cite{Die}.  Lemma~1.5.4 in \cite{HH} shows that the primary decomposition of
$I_{G}$ is the intersection of monomial prime ideals generated by the
variables corresponding to the complement of a maximum clique, so
$\dim(X_G) = \omega(G) - 1$.  Thus, if $\iota(G)$ is the minimal length of an
induced cycle in $G$ having length at least $4$, then Corollary~\ref{c:gl}
gives
\[
  \py(X_G) \leqslant \max\{ n - \iota(G) + 4, \omega(G) \} \, .
\]

The third type of bound depends on a more subtle numerical invariant of $G$.
A graph is \define{chordal} if every induced cycle has exactly three vertices;
again see Section~5.5 in \cite{Die}. Proposition~12.4.4 in \cite{Die}
demonstrates that the \define{treewidth} of $G$ is one less than the size of
the largest clique in a chordal graph containing $G$ with the smallest clique
number.  In this setting, the explicit form of the third bound rediscovers
Lemma~2.7 in \cite{LV}.

\begin{corollary}
  \label{c:tree}
  For any graph $G$, the Pythagoras number $\py(X_G)$ of its associated
  subvariety is at most one more than the treewidth of the underlying graph
  $G$.
\end{corollary}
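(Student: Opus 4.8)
The plan is to derive Corollary~\ref{c:tree} from Theorem~\ref{t:2reg} by exhibiting a real $2$-regular variety of the right dimension that contains $X_G$. The bridge between the two is the combinatorial identification of $2$-regular varieties of the form $X_H$ with chordal graphs $H$, together with the standard characterization of treewidth used in the preceding paragraph.

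First I would recall the graph-theoretic facts already cited in the text: by Proposition~12.4.4 in \cite{Die}, the treewidth of $G$ equals $\omega(H) - 1$ minimized over all chordal graphs $H$ with vertex set $\{0,1,\dotsc,n\}$ containing $G$ as a subgraph. Fix such a chordal supergraph $H$ achieving the minimum, so that $\operatorname{tw}(G) = \omega(H) - 1$. Because $H \supseteq G$, the defining monomials of $I_H$ form a subset of those of $I_G$, hence $I_H \subseteq I_G$ and correspondingly $X_G \subseteq X_H \subseteq \mathbb{P}^n$. Moreover, $\dim(X_H) = \omega(H) - 1 = \operatorname{tw}(G)$ by the computation $\dim(X_G) = \omega(G) - 1$ recalled in the excerpt (the same argument, Lemma~1.5.4 in \cite{HH}, applies verbatim to any graph).

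The key remaining step is that $X_H$ is $2$-regular. Here I would invoke the Fröberg-type result underlying Theorem~2.1 of \cite{EGHP5}: the Stanley--Reisner ring of the clique complex of $H$ has a linear resolution precisely when $H$ has no induced cycle of length at least $4$, i.e.\ when $H$ is chordal. In the language of the excerpt, $\green(X_H)$ is $3$ less than the minimal length of an induced cycle of length $\geqslant 4$; for a chordal graph there is no such cycle, so $\green(X_H) = \infty$, which is exactly the definition of $X_H$ being $2$-regular. Since $X_H$ is defined by monomials with real (indeed integer) coefficients, it is a real subvariety. Applying Theorem~\ref{t:2reg} to the chain $X_G \subseteq X_H$ then gives $\py(X_G) \leqslant 1 + \dim(X_H) = 1 + \operatorname{tw}(G)$, as desired.

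The main obstacle is purely bookkeeping: one must make sure the conventions about "induced cycle of length at least $4$" versus "chordal" line up so that $\green(X_H) = \infty$ genuinely follows, and that the degenerate cases (when $X_G$ is already $2$-regular, or when $G$ is complete so $X_G = \mathbb{P}^n$ and the bound is trivial) do not cause trouble in Theorem~\ref{t:2reg}'s non-degeneracy hypothesis. One subtlety worth flagging: Theorem~\ref{t:2reg} requires $X_G(\mathbb{R})$ to be non-degenerate; since $X_G$ always contains all the coordinate points $[e_i]$ of $\mathbb{P}^n$, its real locus spans $\mathbb{P}^n$, so this hypothesis is automatic. With these checks in place, no further computation is needed.
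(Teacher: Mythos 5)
Your proposal is correct and follows essentially the same route as the paper: pass to a chordal supergraph $H$ realizing the treewidth, observe $X_G \subseteq X_H$ with $\dim(X_H) = \omega(H)-1$, invoke the Fr\"oberg-type characterization (the paper cites Theorem~9.2.3 in \cite{HH} directly rather than Theorem~2.1 in \cite{EGHP5}, but it is the same fact) to see that $X_H$ is $2$\nobreakdash-regular, and apply Theorem~\ref{t:2reg}. Your extra checks on non-degeneracy and the degenerate cases are harmless and consistent with the paper's argument.
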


\begin{proof}
  The definition of Stanley--Reisner ideals implies that one has an
  containment of graphs $G \subseteq G'$ if and only if one has a containment
  of varieties $X_{G} \subseteq X_{G'}$.  The Fr\"{o}berg
  Theorem~\cite{HH}*{Theorem~9.2.3} asserts that the Stanley--Reisner ideal
  $I_{G'}$ is $2$\nobreakdash-regular if and only if the graph $G'$ is
  chordal.  Therefore, the minimum dimension of any real
  $2$\nobreakdash-regular variety containing $X_G$ is at most the treewidth of
  $G$ and appealing to Theorem~\ref{t:2reg} finishes the proof.
\end{proof}

\begin{remark}
  \label{r:tree}
  Corollary~\ref{c:tree} demonstrates that the upper bound in
  Theorem~\ref{t:2reg} specializes to the treewidth of a graph. For a
  non-degenerate subvariety $X \subseteq \mathbb{P}^n$, to what extent is this
  numerical invariant, namely one more that the minimum dimension of any
  $2$\nobreakdash-regular variety in $\mathbb{P}^n$ that contains $X$, the
  natural geometric generalization of treewidth?
\end{remark}

We contrast our three bounds on Pythagoras numbers for some specific graphs.

\begin{example}[Bounds for cycles]
  \label{e:cycle}
  For any integer $n \geqslant 3$, suppose that the graph $G$ is a cycle on
  $n+1$ vertices.  Since $G$ also has $n+1$ edges, the first bound is
  $\py(X_G) \leqslant \tfrac{1}{2} ( \sqrt{16n+17} -1)$.  The minimum length
  of an induced cycle is $n+1$ and $\omega(G) = 2$, so the second bound
  becomes $\py(X_G) \leqslant 3$.  Lastly, adjoining all the chords incident
  to a fixed vertex yields a chordal graph containing the cycle, so the
  treewidth of $G$ is at most $2$ and the third bound also is
  $\py(X_G) \leqslant 3$.  In particular, the first bound is weaker than the
  other two bounds.
\end{example}

\begin{example}[Bounds for the Petersen graph]
  \label{e:Peter}
  Suppose that the graph $G$ is the Petersen graph; see Figure~6.6.1 in
  \cite{Die}.  Since $G$ has $10$ vertices and $15$ edges, the first bound is
  $\py(X_G) \leqslant 6$.  The minimum length of an induced cycle is $5$ and
  $\omega(G) = 2$, so the second bound becomes $\py(X_G) \leqslant 8$.
  Lastly, the treewidth of the Petersen graph is known to be $4$ (for example
  see Section~3 in \cite{HW}), so the third bound is $\py(X_G) \leqslant 5$.
  Here the third bound is stronger than the other two bounds.
\end{example}

\subsection*{Specific bounds for toric subvarieties}

By concentrating on projective toric subvarieties, we can relate our bounds to
the numerical invariants of a lattice polytope.  Consider a lattice polytope
$P \subset \mathbb{R}^d$ containing $n+1$ lattice points, so
$n \coloneqq \abs{P \cap \mathbb{Z}^d} - 1$.  Influenced by Definition~2.1.1
in \cite{CLS}, the associated toric subvariety
$X_{P \cap \smash{\mathbb{Z}^d}} \subseteq \mathbb{P}^n$ is the Zariski
closure of the image of the map from the $d$\nobreakdash-dimensional algebraic
torus to $\mathbb{P}^n$ given by
\[
  (t_1, t_2, \dotsc, t_d) \mapsto \bigl[ t_1^{a_1} \, t_2^{a_2} \, \dotsb \,
  t_d^{a_d} \; \bigl| \; (a_1, a_2, \dotsc, a_d) \in P \cap \mathbb{Z}^d
  \bigr] \, .
\]
We caution that the variety $X_{P \cap \smash{\mathbb{Z}^d}}$ may not be
normal; see Definition~2.3.14 in \cite{CLS} for the canonical normal toric
variety associated to $P$.  Regardless, if
$R \coloneqq \mathbb{R}[x_0,x_2, \dotsc, x_n]/I_{X_{P \cap
    \smash{\mathbb{Z}^d}}}$ is the homogeneous coordinate ring of the
subvariety $X_{P \cap \smash{\mathbb{Z}^d}} \subseteq \mathbb{P}^n$, then
$m \coloneqq \dim_{\mathbb{R}}(R_2)$ equals the number of points in the
Minkowski sum $(P \cap \mathbb{Z}^d) + (P \cap \mathbb{Z}^d)$; compare with
Theorem~1.1.17 in \cite{CLS}.  Hence, the first bound derived from
Theorem~\ref{t:convex} is
$\py(X_{P \cap \smash{\mathbb{Z}^d}}) \leqslant \left\lfloor \frac{1}{2}
  \bigl( \sqrt{8m+1} -1 \bigr) \right\rfloor$.

For the second bound, we would need a polyhedral interpretation of the
Green-Lazarsfeld index $\green(X_{P \cap \smash{\mathbb{Z}^d}})$.  Sadly, we
are unaware of even a reasonable conjectural lower bound for an general
projective toric subvariety.  However, for toric surfaces embedded in
projective space, Corollary~2.1 in \cite{Sch04} proves that
$\green(X_{P \cap \smash{\mathbb{Z}^d}})$ is $3$ less than the number of
lattice points on the boundary of the polygon $P$.  Thus, if
$\operatorname{i}(P)$ denotes the number of lattice points on the interior of
the polygon $P$, then Corollary~\ref{c:gl} gives
$\py(X_{P \cap \smash{\mathbb{Z}^d}}) \leqslant \operatorname{i}(P) + 3$.

The third bound depends on estimating the dimension of the smallest rational
normal scroll than contains the subvariety $X_{P \cap \smash{\mathbb{Z}^d}}$.
Once again, this bound can be found analyzing the lattice points.

\begin{corollary}
  \label{c:lines}
  For any lattice polytope $P \subset \mathbb{R}^d$, its projective toric
  subvariety $X_{P \cap \smash{\mathbb{Z}^d}}$ is contained in a rational
  normal scroll whose dimension is equal to the minimal number of parallel
  lines needed to cover all of the lattice points in $P$, so the Pythagoras
  number $\py(X_{P \cap \smash{\mathbb{Z}^d}})$ is at most one more than the
  dimension of this rational normal scroll.
\end{corollary}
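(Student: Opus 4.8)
The plan is to construct an explicit rational normal scroll containing $X_{P\cap\mathbb{Z}^d}$ from a cover of the lattice points by parallel lines, and then invoke Theorem~\ref{t:2reg}. First I would pick a primitive direction vector $u\in\mathbb{Z}^d$ and consider the family of lines in direction $u$ that meet $P\cap\mathbb{Z}^d$; say there are $s$ such lines, $\ell_1,\dotsc,\ell_s$, and they cover all of $P\cap\mathbb{Z}^d$ with $\ell_i$ containing $k_i\geqslant 1$ lattice points. Choosing $u$ so that $s$ is minimized realizes the stated invariant. The key point is that projection in the direction $u$ — algebraically, the monomial map recording only the coordinate along $u$ modulo the choice of line — exhibits $X_{P\cap\mathbb{Z}^d}$ inside the scroll swept out by the rational normal curves of degrees $k_1-1,\dotsc,k_s-1$ parametrizing each line $\ell_i$. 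More concretely, on each line $\ell_i$ the lattice points are $p_i, p_i+u, \dotsc, p_i+(k_i-1)u$, and the corresponding coordinates of a torus point $(t_1,\dotsc,t_d)$ are $t^{p_i}\cdot(t^u)^0,\dotsc,t^{p_i}\cdot(t^u)^{k_i-1}$; fixing $t^u=\tau$ these sweep out, as the remaining torus parameters vary, a point of $\mathbb{P}^{k_i-1}$ times the single parameter $\tau$, which is exactly the scroll structure. Hence I would define $X'$ to be the rational normal scroll $S(k_1-1,\dotsc,k_s-1)$ embedded by these coordinate blocks and verify $X_{P\cap\mathbb{Z}^d}\subseteq X'$.

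The second step is bookkeeping on dimensions. The scroll $S(k_1-1,\dotsc,k_s-1)$ has dimension $s$ (it is a $\mathbb{P}^{s-1}$-bundle over $\mathbb{P}^1$, or a cone-like degeneration thereof when some $k_i=1$), and it is non-degenerate in $\mathbb{P}^n$ precisely because $\sum_i k_i = n+1$: the total number of lattice points is the sum of the numbers on each line. A rational normal scroll is a variety of minimal degree, hence $2$\nobreakdash-regular, and it is visibly real since $P$ and the chosen direction $u$ are defined over $\mathbb{Q}$. With $\dim(X')=s$ equal to the minimal number of parallel lines covering $P\cap\mathbb{Z}^d$, Theorem~\ref{t:2reg} (or directly Corollary~\ref{c:gl} applied to $X'$, since $\green(X')=\infty$, followed by Lemma~\ref{l:inclPy}) yields $\py(X_{P\cap\mathbb{Z}^d})\leqslant 1+s$, which is the assertion.

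The main obstacle is making the containment $X_{P\cap\mathbb{Z}^d}\subseteq X'$ precise and checking that $X'$ is genuinely a rational normal scroll of the claimed dimension in the degenerate cases — when a covering line carries only one lattice point (so $k_i-1=0$ and that block contributes a "$\mathbb{P}^0$-factor", i.e.\ the scroll acquires a cone point) or when the parallel lines are not in "general position" relative to the lattice. One must confirm that the defining $2\times 2$ minors of the two $\bigl(\sum(k_i-1)\bigr)$-term catalecticant-type matrices built from the coordinate blocks vanish on the torus, and that the resulting variety has the right dimension $s$ rather than collapsing further; this is where the choice of a \emph{primitive} direction $u$ and a careful indexing of the lattice points along each line is essential. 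Once the scroll is correctly identified, the rest is immediate from the results already established. I would also remark that the freedom to rescale, i.e.\ to translate $P$ and apply a unimodular transformation, lets us assume $u=e_1$, which streamlines the monomial computation without loss of generality.
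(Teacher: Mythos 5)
Your proposal is correct and follows essentially the same route as the paper: cover the lattice points of $P$ by $s$ parallel lines in a primitive direction, build the rational normal scroll $S(k_1-1,\dotsc,k_s-1)$ swept out by those lines, and conclude via Theorem~\ref{t:2reg}. The paper packages the containment slightly differently---it forms the Lawrence prism $P'\subset\mathbb{R}^s$ whose toric variety is that scroll and uses the induced bijection of lattice points to get a toric inclusion $X_{P\cap\mathbb{Z}^d}\subseteq X_{P'\cap\mathbb{Z}^s}$, which sidesteps the explicit minor computations and the degenerate-case worries ($k_i=1$ blocks) that you flag, but the underlying construction is identical.
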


\begin{proof}
  Applying Theorem~\ref{t:2reg}, it suffices to find a rational normal scroll
  containing $X_{P \cap \smash{\mathbb{Z}^d}}$ whose dimension is equal to the
  minimal number of parallel lines needed to cover all of the lattice points
  in $P$.  Suppose that the lattice points in $P$ are covered by $k$ lines
  parallel to the vector $\mathbf{v} \in \mathbb{R}^d$. We may assume that
  $\mathbf{v}$ is a primitive lattice vector.  For each index
  $0 \leqslant i < k$, let $a_i$ be the lattice length of the corresponding
  line segment covering lattice points in $P$.  By relabeling the lines, we
  may also assume that
  $a_{k-1} \geqslant a_{k-2} \geqslant \dotsb \geqslant a_{0} \geqslant 0$.
  Let $\mathbf{e}_1, \mathbf{e}_2, \dotsc, \mathbf{e}_k$ denote the standard
  basis for $\mathbb{Z}^k$ and consider the lattice polytope
  \[
    P' \coloneqq \conv \left\{ \mathbf{0}, \mathbf{e}_1, \mathbf{e}_2, \dotsc,
      \mathbf{e}_{k-1}, a_0 \, \mathbf{e}_k, \mathbf{e}_1 + a_1 \,
      \mathbf{e}_k, \mathbf{e}_2 + a_2 \, \mathbf{e}_k, \dotsc,
      \mathbf{e}_{k-1} + a_{k-1} \, \mathbf{e}_k \right\} \subset \mathbb{R}^k
    \, .
  \]
  By construction, the Lawrence prism $P'$ is the normal full-dimensional
  lattice polytope of a rational normal scroll.  The affine map, which sends
  $\mathbf{e}_k$ to $\mathbf{v}$ and the lattice points
  $\mathbf{0}, \mathbf{e}_1, \mathbf{e}_2, \dotsc, \mathbf{e}_{k-1}$ to the
  minimal points in $P$ relative to the vector $\mathbf{v}$ on the
  corresponding line, defines a bijection between the lattice points in the
  polytopes $P$ and $P'$ and, thereby, induces a toric inclusion
  $X_{P \cap \smash{\mathbb{Z}^d}} \subseteq X_{\smash{P'} \cap
    \smash{\mathbb{Z}^k}}$.
\end{proof}

\begin{remark}
  Since every line bundle on a toric variety is the image of a torus-invariant
  Cartier divisor (see Theorem~4.2.1 in \cite{CLS}), modifying the proof of
  Corollary~\ref{c:scroll} shows that, among all rational normal scrolls
  containing a toric variety, there is one having minimal dimension such that
  the inclusion map is a toric morphism. Hence, the minimal number of parallel
  lines needed to cover all the lattice points in the polytope $P$ is the
  dimension of the smallest rational normal scroll containing the toric
  variety $X_{P \cap \smash{\mathbb{Z}^d}}$.
\end{remark}

\begin{example}[Upper bounds for the Veronese embeddings of $\mathbb{P}^2$]
  \label{e:PP2Up}
  For any integer $j \geqslant 2$, consider the lattice polygon
  $P \coloneqq \conv\{ (0,0), (j,0), (0,j) \} \subset \mathbb{R}^2$. The
  associated toric subvariety $X_{P \cap \smash{\mathbb{Z}^2}}$ is the
  $j$\nobreakdash-th Veronese embedding
  $\mathbb{P}^2 \subset \mathbb{P}^{\binom{j+2}{2}-1}$; see Example~14.2.7 in
  \cite{CLS}.  Since $\dim_{\mathbb{R}}(R_2) = \binom{2j+2}{2}$, the first
  bound is
  \[
    \py(X_{P \cap \smash{\mathbb{Z}^2}}) \leqslant \bigl\lfloor \tfrac{1}{2}
    \bigl( \sqrt{8(j+1)(2j+1) + 1} -1 \bigr) \bigr\rfloor \, .
  \]
  This polygon has $\binom{j-1}{2}$ interior lattice points, so the second
  bound is $\py(X_P) \leqslant \binom{j-1}{2} + 3$.  Lastly, $j+1$ horizontal
  lines cover all the lattice points in $P$, so the third bound is
  $\py(X_{P \cap \smash{\mathbb{Z}^2}}) \leqslant j+2$.  For $j = 2$, the
  second bound is stronger than the other two and is optimal because
  $X_{P \cap \smash{\mathbb{Z}^2}}$ is a variety of minimal degree.  On the
  other hand, the third bound is at least as strong as the other two for all
  $j \geqslant 3$.  For lower bounds on the Pythagoras number of the Veronese
  embeddings of $\mathbb{P}^2$, see Example~\ref{e:PP2Low}.
\end{example}

\section{Quadratic persistence}
\label{s:qp}

\noindent
This section introduces a numerical invariant of a projective subvariety,
which we call the quadratic persistence.  By definition, this invariant
encodes information about the behaviour of the variety under projections away
from certain linear subspaces.  After summarizing the fundamental features of
this new invariant, we analyze varieties with large quadratic persistence and
find a lower bound on the Pythagoras number of an irreducible totally-real
variety.

\subsection*{Properties of quadratic persistence}

Let $X \subseteq \mathbb{P}^n$ be a complex subvariety prescribed by the
saturated homogeneous ideal $I_X$ in polynomial ring
$\mathbb{C}[x_0, x_1, \dotsc, x_n]$.  For any subset
$Z \subseteq \mathbb{P}^n$, the intersection of all linear subspaces of
$\mathbb{P}^n$ which each contain every point in $Z$ is denoted by $\Span(Z)$.
Given a finite set $\Gamma$ of closed points in $X$ spanning a
$(k-1)$\nobreakdash-plane and a complementary linear subspace
$\mathbb{P}^{n-k}$ in $\mathbb{P}^n$, the projection away from $\Gamma$ is the
rational map
$\pi_{\Gamma} \colon \mathbb{P}^n \dashrightarrow \mathbb{P}^{n-k}$ defined by
sending a closed point $q \in \mathbb{P}^n \setminus \Span(\Gamma)$ to the
intersection of $\mathbb{P}^{n-k}$ with the $k$\nobreakdash-plane
$\Span(\{ q \} \cup \Gamma)$. To be notational consistent, we write
$I_{\pi_\Gamma(X)}$ for the saturated homogeneous ideal of the image
$\pi_\Gamma(X) \subseteq \mathbb{P}^{n-k}$. With these preparations, we now
present the key numerical invariant.

\begin{definition}
  \label{d:qp}
  For a complex subvariety $X \subseteq \mathbb{P}^n$, the \define{quadratic
    persistence} $\qp(X)$ is the smallest $k \in \mathbb{N}$ for which there
  exists a finite set $\Gamma$ of closed points in $X$ such that
  $k = \abs{\Gamma}$ and the homogeneous ideal $I_{\pi_\Gamma(X)}$ contains no
  quadratic polynomials.
\end{definition}

The definition of quadratic persistence leads to some easy inequalities.

\begin{lemma}
  \label{l:qpBasic}
  Let $X \subseteq \mathbb{P}^n$ be a complex subvariety.
  \begin{compactenum}[\upshape i.]
  \item If $X$ is non-degenerate, then we have the upper bound
    $\qp(X) \leqslant \codim(X)$.
  \item An inclusion of varieties $X \subseteq X'$ produces the inequality
    $\qp(X') \leqslant \qp(X)$.
  \end{compactenum}
\end{lemma}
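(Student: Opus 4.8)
The plan is to establish the two inequalities by different mechanisms: part~(i) by an induction on codimension that repeatedly projects away from a single point, and part~(ii) by reusing one optimal projection center.

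For part~(i), the engine is the following assertion about inner projection: if $Y \subseteq \mathbb{P}^m$ is a non-degenerate subvariety with $Y \neq \mathbb{P}^m$, then for a general closed point $p \in Y$ the image $\pi_p(Y) \subseteq \mathbb{P}^{m-1}$ is again non-degenerate and satisfies $\dim \pi_p(Y) = \dim(Y)$, so that $\codim\bigl(\pi_p(Y)\bigr) = \codim(Y) - 1$. Non-degeneracy of the image holds for \emph{every} $p$: were $\pi_p(Y)$ contained in a hyperplane of $\mathbb{P}^{m-1}$, then $Y$ would lie in the cone over that hyperplane with apex $p$, which is a hyperplane of $\mathbb{P}^m$, contradicting non-degeneracy. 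The equality of dimensions holds for general $p$ because a non-degenerate variety that is not a linear space is not a cone with a general point as apex, so the generic line through $p$ meets $Y$ in a finite set. Setting $X_0 \coloneqq X$ and choosing at each stage a general point of the dense image that is the projection of an honest point $p_{j+1} \in X$, the composition law for linear projections gives $\pi_{\Gamma_{j+1}}(X) = \pi_{p_{j+1}}\bigl(\pi_{\Gamma_j}(X)\bigr)$, where $\Gamma_{j+1} \coloneqq \Gamma_j \cup \{ p_{j+1} \}$ still spans a $j$\nobreakdash-plane. After $\codim(X)$ steps we reach a non-degenerate subvariety of $\mathbb{P}^{\dim(X)}$ of dimension $\dim(X)$, which must be all of $\mathbb{P}^{\dim(X)}$; its homogeneous ideal is zero and hence contains no quadratic polynomial, so $\qp(X) \leqslant \codim(X)$.

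For part~(ii), let $k \coloneqq \qp(X)$ and fix a set $\Gamma$ of $k$ closed points in $X$ for which $I_{\pi_\Gamma(X)}$ contains no quadratic polynomial. Because $\Gamma \subseteq X \subseteq X'$, this same $\Gamma$ is a set of $k$ closed points in $X'$ spanning the same $(k-1)$-plane, hence an admissible projection center for $X'$, and $\pi_\Gamma$ denotes literally the same rational map in both situations. Passing to Zariski closures of images preserves the inclusion $X \subseteq X'$, giving $\pi_\Gamma(X) \subseteq \pi_\Gamma(X')$ and therefore the reverse inclusion of saturated ideals $I_{\pi_\Gamma(X')} \subseteq I_{\pi_\Gamma(X)}$. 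Any quadratic polynomial in the smaller ideal would lie in the larger one, so $I_{\pi_\Gamma(X')}$ also contains no quadratic polynomial. Thus $\Gamma$ witnesses $\qp(X') \leqslant \abs{\Gamma} = k = \qp(X)$.

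The only genuinely delicate point is in part~(i): I must confirm that inner projection from a \emph{general} point preserves dimension---the cone obstruction is exactly what forces the word ``general''---and that a general point of the image $X_j$ is the projection of an honest point of $X$ (so $\Gamma_{j+1}$ consists of points of $X$) that avoids the previous center (so the spanning dimension increases by one at each step). Reducibility of $X$, permitted by our conventions, is handled by selecting each new point on a top-dimensional component, so that dimensions never collapse. By contrast, part~(ii) is essentially formal, provided one is careful that ``image'' always means the Zariski closure, which is what makes both the inclusion of images and the consequent reverse inclusion of saturated ideals valid.
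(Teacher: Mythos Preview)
Your argument for part~(ii) is essentially identical to the paper's: choose an optimal $\Gamma \subseteq X \subseteq X'$ and use the ideal containment $I_{\pi_\Gamma(X')} \subseteq I_{\pi_\Gamma(X)}$.

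For part~(i) you take a genuinely different route. The paper constructs the full projection center in one shot: it fixes a top-dimensional irreducible component $Z$ of $X$, chooses points $\Gamma \subseteq X \setminus Z$ spanning a linear complement to $\Span(Z)$, and then points $\Gamma' \subseteq Z$ making the projection dominant on $Z$; since $\abs{\Gamma \cup \Gamma'} = \codim(X)$ and the image is all of $\mathbb{P}^{\dim(X)}$, the bound follows immediately. Your inductive one-point-at-a-time approach is perfectly sound when $X$ is irreducible, and arguably more intuitive.

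There is, however, a genuine slip in your handling of the reducible case. Your engine assertion relies on ``$Y$ is not a cone with apex $p$ $\Rightarrow$ $\dim \pi_p(Y) = \dim Y$,'' but for reducible $Y$ this implication fails: what you actually need is that some \emph{top-dimensional component} of $Y$ is not a cone with apex $p$. Your prescription to always select $p$ on a top-dimensional component breaks precisely when the unique top-dimensional component is linear. Concretely, take $X = L_1 \cup L_2 \subset \mathbb{P}^3$ with $L_1$ a plane and $L_2$ a line skew to it; then $X$ is non-degenerate of codimension $1$, but projecting from any $p \in L_1$ gives $\dim \pi_p(X) = 1 < 2$. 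The fix is easy---choose $p$ on another component (here, on $L_2$) so that the top-dimensional component is not a cone with apex $p$---but your stated rule does not cover it. The paper's direct construction sidesteps this entirely by separating the roles of the component $Z$ and the auxiliary points in $X \setminus Z$.
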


\begin{proof} \hfill
  \begin{compactenum}[\itshape i.] 
  \item Fix an irreducible component $Z$ of $X$. Since $X$ is non-degenerate,
    there is a set $\Gamma$ of closed points in $X \setminus Z$ such that
    $\abs{\Gamma} = \codim \bigl( \Span(Z), \mathbb{P}^n \bigr)$ and
    $\Span(\Gamma \cup Z) = \mathbb{P}^n$.  For any set $\Gamma'$ of closed
    points in $Z$ such that $\abs{\Gamma'} = \codim \bigl( Z, \Span(Z) \bigr)$
    and the projection away from $\Gamma'$ is dominant when restricted to $Z$,
    the projection away from $\Gamma \cup \Gamma'$ is also dominant when
    restricted to $Z$ because
    $\codim \bigl( \Span(Z), \mathbb{P}^n \bigr) + \codim \bigl( Z, \Span(Z)
    \bigr) = \codim \bigl( Z, \mathbb{P}^n \bigr)$.  Thus, the ideal
    $I_{\pi_{\Gamma \cup \Gamma'}(X)}$ contains no quadratic polynomials and
    $\qp(X) \leqslant \codim(Z,\mathbb{P}^n)$.
  \item For any finite set $\Gamma \subseteq X \subseteq X'$, we have
    $I_{\pi_\Gamma(X)} \supseteq I_{\pi_\Gamma(X')}$ which gives
    $\qp(X) \geqslant \qp(X')$. \qedhere
  \end{compactenum}
\end{proof}

To better understand quadratic persistence, we examine an auxiliary function
that counts the quadrics kept under an inner projection.  More precisely,
for any finite subset $\Gamma$ of closed points in $X$, set
$\kept_\Gamma(X) \coloneqq \dim (I_{\pi_\Gamma(X)})_2$. Beyond recording the
basic attributes of this function, the following result shows that the
quadratic persistence of an irreducible subvariety is computed by projecting
away from a general set of closed points. Part~\emph{v} appears implicitly in
Theorem~3.1~(a) of \cite{HK15}.

\begin{lemma}
  \label{l:qpGen}
  Let $X \subseteq \mathbb{P}^n$ be a complex subvariety.
  \begin{compactenum}[\upshape i.]
  \item For any finite set $\Gamma$ of closed points in $X$, the number
    $\kept_\Gamma(X)$ is the dimension of the linear subspace spanned by the
    quadrics in $I_X$ that are singular in $\mathbb{P}^n$ at the points of
    $\Gamma$.
  \item An inclusion $\Gamma \subseteq \Gamma'$ of finite subsets of $X$ gives
    the inequality $\kept_\Gamma(X) \geqslant \kept_{\Gamma'}(X)$.
  \item For any $r \in \mathbb{N}$, the function that sends the
    $r$\nobreakdash-tuple $(p_1, p_2, \dotsc, p_r) \in X^r$ of closed points
    to $\kept_{ \{ p_1, p_2, \dotsc, p_r\}}(X)$ is upper semi-continuous.
  \item For any $r \in \mathbb{N}$, the locus in $X^r$ on which the function
    $(p_1, p_2, \dotsc, p_r) \mapsto \kept_{ \{ p_1, p_2, \dotsc, p_r\}}(X)$
    achieves its minimum is Zariski open.
  \item For any finite set $\Gamma$ of closed points in $X$, we have
    $\qp(X) \leqslant \abs{\Gamma} + \qp \bigl( \pi_\Gamma(X) \bigr)$.    
  \item For any closed point $p \in X$, the difference
    $\dim_{\mathbb{C}} (I_X)_2 - \kept_{\{ p \} }(X)$ is the dimension of the
    linear subspace spanned by the gradients of the quadrics in $I_X$
    evaluated at an affine representative of the point $p$.
  \item For any closed point $p \in X$, we have
    $\dim_{\mathbb{C}} (I_X)_2 - \kept_{ \{ p \}}(X) \leqslant \codim X$.
  \end{compactenum}
\end{lemma}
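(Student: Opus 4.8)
The plan is to reduce the whole lemma to one linear-algebra computation, carried out in part~\emph{i}. Given $\Gamma = \{p_0, p_1, \dotsc, p_{k-1}\}$, I would choose homogeneous coordinates on $\mathbb{P}^n$ so that $p_i = [\mathbf{e}_i]$ for $0 \leqslant i < k$; then $\Span(\Gamma)$ is the coordinate subspace $\{x_k = \dotsb = x_n = 0\}$ and $\pi_\Gamma$ is the coordinate projection $[x_0 : \dotsb : x_n] \dashrightarrow [x_k : \dotsb : x_n]$. Since $X \setminus \Span(\Gamma)$ is dense in $X$ and maps onto a dense subset of $\pi_\Gamma(X)$, a quadratic form $q$ in the variables $x_k, \dotsc, x_n$ lies in $I_{\pi_\Gamma(X)}$ if and only if it vanishes on $X$, so $(I_{\pi_\Gamma(X)})_2 = (I_X)_2 \cap \mathbb{C}[x_k, \dotsc, x_n]_2$. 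Representing $q \in S_2$ by the symmetric matrix $\mathbf{A}$ with $q = \bm{x}^{\transpose} \mathbf{A} \, \bm{x}$, the form $q$ involves only $x_k, \dotsc, x_n$ exactly when $\mathbf{A} \, \mathbf{e}_i = 0$ for every $i < k$; since $\mathbf{A} \, \mathbf{e}_i$ is half the gradient of $q$ at $\mathbf{e}_i$ and the singular locus of the quadric $\operatorname{V}(q)$ is the projectivization of $\ker \mathbf{A}$, this says precisely that $\operatorname{V}(q)$ is singular at every point of $\Gamma$. Hence $(I_{\pi_\Gamma(X)})_2$ is identified with the span of those quadrics in $I_X$ that are singular at each point of $\Gamma$, which is part~\emph{i}.

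From here parts~\emph{ii}, \emph{vi}, and~\emph{vii} should be immediate. Part~\emph{ii} holds because a quadric singular at every point of $\Gamma'$ is in particular singular at every point of the subset $\Gamma$. For part~\emph{vi}, I would fix an affine representative $\hat{p}$ of $p$ and apply the rank--nullity theorem to the evaluation map $(I_X)_2 \to \mathbb{C}^{n+1}$ sending $q$ to $\nabla q(\hat{p})$: by part~\emph{i} its kernel has dimension $\kept_{\{p\}}(X)$, so $\dim_{\mathbb{C}} (I_X)_2 - \kept_{\{p\}}(X)$ equals the dimension of its image, which is exactly the span of the gradients at $\hat{p}$ of the quadrics in $I_X$. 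Part~\emph{vii} then follows since each such gradient is orthogonal to the Zariski tangent space of the affine cone over $X$ at $\hat{p}$, and the space of such vectors has dimension at most $\codim X$.

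For parts~\emph{iii} and~\emph{iv}, I would fix $r$ and a basis $q_1, q_2, \dotsc, q_N$ of $(I_X)_2$, work over a product of standard affine charts in $X^r$ on which the affine representatives $\hat{p}_1, \dotsc, \hat{p}_r$ depend regularly on $(p_1, \dotsc, p_r)$, and form the matrix of regular functions whose rows are the tuples $\bigl( \nabla q_t(\hat{p}_1), \dotsc, \nabla q_t(\hat{p}_r) \bigr)$. By part~\emph{i} its nullity equals $\kept_{\{p_1, \dotsc, p_r\}}(X)$, so upper semicontinuity of this function follows from the lower semicontinuity of matrix rank, and part~\emph{iv} is then the formal statement that an upper-semicontinuous function with values in $\mathbb{N}$ attains its minimum on a nonempty Zariski open set. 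For part~\emph{v}, set $Y \coloneqq \pi_\Gamma(X)$, $k \coloneqq \abs{\Gamma}$, and $m \coloneqq \qp(Y)$. Since some configuration of $m$ points of $Y$ witnesses $\qp(Y)$, the minimum over $Y^m$ of the function from part~\emph{iii} is $0$ and, by part~\emph{iv}, is attained on a dense open subset of $Y^m$; as the set-theoretic image of $\pi_\Gamma|_X$ is constructible and dense in $Y$, I can choose points $q_1, \dotsc, q_m$ in general position, lying in that image, with $\kept_{\{q_1, \dotsc, q_m\}}(Y) = 0$. Lifting them to $p_1, \dotsc, p_m \in X$ with $\pi_\Gamma(p_j) = q_j$ and using the transitivity of linear projection, the ideal of $\pi_{\Gamma \cup \{p_1, \dotsc, p_m\}}(X) = \pi_{\{q_1, \dotsc, q_m\}}(Y)$ contains no quadrics, so $\qp(X) \leqslant k + m = \abs{\Gamma} + \qp\bigl(\pi_\Gamma(X)\bigr)$.

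The step I expect to be the main obstacle is the lifting in part~\emph{v}: the quadratic persistence of the image $\pi_\Gamma(X)$ is, a priori, witnessed only by \emph{some} configuration of its points, and to pull such a configuration back to $X$ I need one contained in the honest image of $\pi_\Gamma|_X$ --- which is exactly why the semicontinuity statements of parts~\emph{iii}--\emph{iv}, together with the density of the image of a dominant rational map, are brought to bear. Everything else amounts to the linear algebra of the Gram matrix and the rank--nullity theorem.
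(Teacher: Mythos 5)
Your proposal is correct and follows essentially the same route as the paper: part~\emph{i} via coordinates adapted to $\Span(\Gamma)$ and elimination, parts~\emph{ii}, \emph{vi}, \emph{vii} as immediate linear-algebra consequences, and part~\emph{v} by using the semicontinuity statements to move a witnessing configuration into the image of $\pi_\Gamma$ before lifting. The only cosmetic difference is in part~\emph{iii}, where you get upper semicontinuity from the rank of a matrix of gradients while the paper uses the fibre dimensions of an incidence correspondence $\Psi_r \subseteq \mathbb{P}\bigl((I_X)_2\bigr) \times X^r$; the two arguments are interchangeable.
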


\begin{proof} \hfill
  \begin{compactenum}[\itshape i.]
  \item Choose coordinates $x_0, x_1, \dotsc, x_n$ on $\mathbb{P}^n$ so that
    the linear subspace spanned by $\Gamma$ is cut out by the variables
    $x_{k-1}, x_k, \dotsc, x_{n}$. It follows that
    $I_{\pi_\Gamma(X)} = I_X \cap \mathbb{C}[x_{k-1}, x_{k}, \dotsc, x_n]$;
    see Theorem~8.5.8 in \cite{CLO}. Hence, the graded piece
    $(I_{\pi_\Gamma(X)})_2$ consists of the quadratic polynomials in $I_X$
    that do not involve the variables $x_0, x_1, \dotsc, x_{k-2}$. These are
    precisely the quadrics in $I_X$ that are singular along the linear
    subspace spanned by $\Gamma$ or, equivalently, at the points in $\Gamma$.
  \item Since $\Gamma \subseteq \Gamma'$, the quadrics in $I_X$ singular along
    $\Gamma'$ are contained among those singular along $\Gamma$, so
    part~\emph{i} implies that $\kept_\Gamma(X) \geqslant \kept_{\Gamma'}(X)$.
  \item For any $r \in \mathbb{N}$, consider the incidence correspondence
    $\Psi_r \subseteq \mathbb{P} \bigl( (I_X)_2 \bigr) \times X^r$ consisting
    of all pairs $\bigl( f, (p_1, p_2, \dotsc, p_r) \bigr)$ where the
    quadratic polynomial $f \in I_X$ is singular at all of the closed points
    $p_1, p_2, \dotsc, p_r \in X$. Part~\emph{i} implies that the value of
    $(p_1, p_2, \dotsc, p_r) \mapsto \kept_{ \{ p_1, p_2, \dotsc, p_r \}}(X)$
    is equal to one more than the dimension of the fiber of the projection
    $\operatorname{pr}_2 \colon \Psi_r \rightarrow X^r$. The claim follows
    from the semi-continuity of fibre dimensions; see Th\'eor\`eme~13.1.3 in
    \cite{EGAIV}.
  \item We consider two distinct cases. First, suppose that the image
    $\operatorname{pr}_2(\Psi_r)$ is a proper closed subset of the product
    $X^r$. If $\Gamma$ is a general set of $r$ points on $X$, then the ideal
    $I_{\pi_\Gamma(X)}$ contains no quadratic polynomials and the function
    $(p_1, p_2, \dotsc, p_r) \mapsto \kept_{ \{ p_1, p_2, \dotsc, p_r \}}(X)$
    attains its minimum $0$ on the complement of the image which is a Zariski
    open set. Otherwise, we have $\operatorname{pr}_2(\Psi_r) = X^r$. In this
    case, the minimum of the function
    $(p_1, p_2, \dotsc, p_r) \mapsto \kept_{ \{ p_1, p_2, \dotsc, p_r \}}(X)$
    is some $i \in \mathbb{N}$. This minimum is attained on the complement of
    the sets $\Gamma \subseteq X^r$ of closed points with image greater than
    or equal to $i+1$, which is closed by part~\emph{iii}.
  \item Let $\Gamma' \subseteq \pi_\Gamma(X)$ be a set of closed points such
    that $\abs{\Gamma'} = \qp \bigl( \pi_\Gamma(X) \bigr)$ and the homogeneous
    ideal $I_{\pi_{\smash{\Gamma'}} ( \pi_\Gamma(X) )}$ contains no quadrics.
    Using part~\emph{iv}, we may assume that the subset $\Gamma'$ lies in the
    image of the rational map $\pi_{\Gamma}$.  For each closed point in
    $\Gamma'$, choose a closed point in its fibre contained in $X$, so that
    the resulting finite set
    $\Gamma'' \subseteq \pi_\Gamma^{-1}(\Gamma') \cap X$ has the same
    cardinality as $\Gamma'$ and $\pi_{\Gamma}(\Gamma'') = \Gamma'$. It
    follows that
    $\pi_{\Gamma \cup \Gamma''}(X) = \pi_{\Gamma'} \bigl( \pi_\Gamma(X)
    \bigr)$, so there are no quadratic polynomials in
    $I_{\pi_{\Gamma \cup \Gamma''}(X)}$. Therefore, we conclude that
    $\qp(X) \leqslant \abs{\Gamma \cup \Gamma''} = \abs{\Gamma} +
    \abs{\Gamma'} = \abs{\Gamma} + \qp \bigl( \pi_\Gamma(X) \bigr)$.
  \item Choose an affine representative $\widetilde{p} \in \mathbb{A}^{n+1}$
    of the point $p \in \mathbb{P}^n$ and let
    $\nabla|_{\widetilde{p}} \colon (I_X)_2 \to T_{\mathbb{P}^n,p}^*$ be the
    map defined by sending the quadratic polynomial $f$ to its gradient
    $\nabla f(\widetilde{p})$. Part~\emph{i} implies that the kernel of this
    map is $(I_{\pi_{\{p\}}(X)})_2$, so
    $\dim_{\mathbb{C}} (I_X)_2 - \dim_{\mathbb{C}} (I_{\pi_\Gamma(X)})_2 =
    \operatorname{rank}( \nabla|_{\widetilde{p}} )$.
  \item Since every point in $T_{X,p}$ is annihilated by the gradient
    $\nabla f(\widetilde{p})$, the image of $\nabla|_{\widetilde{p}}$ is
    contained in $(T_{\mathbb{P}^n,p} / T_{X,p})^*$ and
    $\dim_{\mathbb{C}} (I_X)_2 - \kept_{ \{ p \}}(X) \leqslant \codim X$.
    \qedhere
  \end{compactenum}
\end{proof}

As an application, we characterize the projective subvarieties having
quadratic persistence one.

\begin{corollary}
  \label{c:minqp}
  The quadratic persistence of a complex subvariety $X \subseteq \mathbb{P}^n$
  equals one if and only if the vector space $(I_X)_2$ is nonempty and the
  hypersurfaces corresponding to a basis for $(I_X)_2$ intersect transversely
  at a generic point in $X$.
\end{corollary}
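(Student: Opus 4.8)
The plan is to reduce the statement to an application of Lemma~\ref{l:qpGen} by translating $\qp(X) = 1$ into the existence of a single closed point at which the quadrics in $I_X$ behave generically.

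First I would handle the two implications through a common computation. By Definition~\ref{d:qp}, $\qp(X) = 1$ means two things: the ideal $I_X$ itself contains a nonzero quadratic polynomial (otherwise $\qp(X) = 0$), so $(I_X)_2 \neq 0$; and there exists a single closed point $p \in X$ such that $I_{\pi_{\{p\}}(X)}$ contains no quadrics, i.e.\ $\kept_{\{p\}}(X) = 0$. By part~\emph{vi} of Lemma~\ref{l:qpGen}, the quantity $\dim_{\mathbb{C}} (I_X)_2 - \kept_{\{p\}}(X)$ equals the rank of the gradient map $\nabla|_{\widetilde p} \colon (I_X)_2 \to T^*_{\mathbb{P}^n, p}$, so $\kept_{\{p\}}(X) = 0$ is equivalent to saying this map is injective, that is, the gradients at $\widetilde p$ of the quadrics forming a basis of $(I_X)_2$ are linearly independent in $T^*_{\mathbb{P}^n, p}$. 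Geometrically, linear independence of the gradients at $p$ of the defining equations of the hypersurfaces $V(q_1), \dotsc, V(q_c)$ (where $q_1, \dotsc, q_c$ is a basis of $(I_X)_2$) is precisely the statement that these hypersurfaces meet transversely at $p$. So $\qp(X) = 1$ is equivalent to: $(I_X)_2 \neq 0$ and there exists a closed point $p \in X$ at which the hypersurfaces cut out by a basis of $(I_X)_2$ intersect transversely.

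It remains to upgrade ``there exists such a point $p$'' to ``a generic point of $X$ works,'' which is exactly what part~\emph{iv} of Lemma~\ref{l:qpGen} provides: applied with $r = 1$, the locus in $X$ on which $p \mapsto \kept_{\{p\}}(X)$ attains its minimum is Zariski open, hence dense in each irreducible component of $X$. If $\qp(X) = 1$ then this minimum is $0$, so $\kept_{\{p\}}(X) = 0$ for $p$ in a dense open subset of $X$, which by the translation above means transverse intersection at a generic point. Conversely, if $(I_X)_2 \neq 0$ and transversality holds at a generic point $p$, then $\kept_{\{p\}}(X) = 0$ for some point, so $\qp(X) \leqslant 1$; and $\qp(X) \neq 0$ because $(I_X)_2 \neq 0$ forces every projection away from the empty set to retain that quadric, whence $\qp(X) = 1$.

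The only real subtlety---and the step I would be most careful about---is pinning down what ``intersect transversely at a generic point'' should mean in the corollary's statement when $X$ itself may be singular or the basis quadrics may share components of the singular locus: the clean reading, and the one that matches Lemma~\ref{l:qpGen}\emph{vi}, is that the differentials $d q_1|_p, \dotsc, d q_c|_p$ are linearly independent as elements of the cotangent space of $\mathbb{P}^n$ at $p$. I would state this equivalence explicitly at the start of the proof so the reader knows transversality is being interpreted ambient-wise (independence of gradients), not as smoothness of the scheme-theoretic intersection; once that is fixed, the argument is just the two short implications above, each a direct invocation of parts \emph{iv} and \emph{vi} of the preceding lemma.
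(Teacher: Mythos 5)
Your proposal is correct and follows essentially the same route as the paper: translate $\kept_{\{p\}}(X)=0$ into injectivity of the gradient map via part~\emph{vi} of Lemma~\ref{l:qpGen}, and use part~\emph{iv} to pass between a single point and a generic one. You actually spell out both implications and the interpretation of transversality more explicitly than the paper's proof, which only details the forward direction, but the underlying argument is identical.
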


\begin{proof}
  For notational brevity, set $I \coloneqq I_X$.  By definition, the equality
  $\qp(X) = 0$ is equivalent to the vector space $I_2$ being empty. Hence, the
  equality $\qp(X) = 1$ ensures that, for a generic point $p \in X$, we have
  $\kept_{ \{ p \} }(X) = 0$. Let $\widetilde{p} \in \mathbb{A}^{n+1}$ be an
  affine representative of the point $p \in \mathbb{P}^n$. If
  $m \coloneqq \dim_{\mathbb{C}} (I_2)$ and the polynomials
  $f_1, f_2, \dotsc, f_m$ form a basis for $I_2$, then Part~\emph{vi} of
  Lemma~\ref{l:qpGen} establishes that the gradients
  $\nabla f_i(\widetilde{p})$, for all $1 \leqslant i \leqslant m$, are
  linearly independent.
\end{proof}

We assemble the number of quadrics kept under successive inner projections
into a sequence. For a non-degenerate irreducible complex subvariety
$X \subseteq \mathbb{P}^n$, set
$\kept_{\!j}(X) \coloneqq \kept_{\Gamma_{\!j}}(X)$ where $\Gamma_{\!j}$ is any
general set of closed points on $X$ having cardinality $j$ and let
$\kept(X) \coloneqq \bigl( \kept_{0}(X), \kept_{1}(X), \kept_{2}(X), \dotsc
\bigr) \in \mathbb{N}^{\mathbb{N}}$. Part~\emph{iv} of Lemma~\ref{l:qpGen}
proves that the sequence $\kept(X)$ is independent of the choice of the
general sets.  We verify that $\kept(X)$ is a strictly-convex integer
partition with distinct parts.

\begin{proposition}
  \label{p:convex}
  For a non-degenerate irreducible complex subvariety
  $X \subseteq \mathbb{P}^n$, the sequence $\kept(X)$ of nonnegative integers
  is decreasing with $\qp(X)$ nonzero entries that, for all
  $0 < j \leqslant \qp(X)$, satisfies
  \[
    2 \, \kept_{\!j}(X) < \kept_{\!j-1}(X) + \kept_{\!j+1}(X) \, .
  \]
\end{proposition}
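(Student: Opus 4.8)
The plan is to prove three facts in sequence: first that $\kept(X)$ is eventually zero with exactly $\qp(X)$ nonzero entries, then that it is strictly decreasing while positive, and finally the strict-convexity inequality. For the count of nonzero entries, I would unwind the definitions: by Part~\emph{iv} of Lemma~\ref{l:qpGen}, $\kept_{\!j}(X)$ equals the minimum of $\kept_\Gamma(X)$ over all $\Gamma \subseteq X$ with $\abs{\Gamma} = j$, and by Definition~\ref{d:qp} the quantity $\qp(X)$ is the least $j$ for which some such $\Gamma$ kills all quadrics, i.e.\ the least $j$ with $\kept_{\!j}(X) = 0$. Combined with Part~\emph{ii} of Lemma~\ref{l:qpGen} (monotonicity under adding points, hence $\kept_{\!j}(X) \geqslant \kept_{\!j+1}(X)$ after passing to general sets via Part~\emph{iv}), this immediately gives a decreasing sequence that is positive for $j < \qp(X)$ and zero for $j \geqslant \qp(X)$, hence exactly $\qp(X)$ nonzero entries.

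For the heart of the proposition, the strict-convexity inequality $2\,\kept_{\!j}(X) < \kept_{\!j-1}(X) + \kept_{\!j+1}(X)$ for $0 < j \leqslant \qp(X)$, my approach is to interpret the successive differences geometrically and bound how much the drop can \emph{increase} from one step to the next. Fix a general set $\Gamma = \{p_1,\dots,p_{j-1}\}$ and consider the projected variety $Y \coloneqq \pi_\Gamma(X)$; by Part~\emph{v} of Lemma~\ref{l:qpGen} and the fact that general sets can be chosen fibrewise, the quantities $\kept_{\!j-1}(X), \kept_{\!j}(X), \kept_{\!j+1}(X)$ can be read off as $\kept_0(Y), \kept_1(Y), \kept_2(Y)$ for the non-degenerate irreducible subvariety $Y$. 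So it suffices to prove the inequality in the base case $j=1$: for a non-degenerate irreducible $X$ with $(I_X)_2 \neq 0$ and a general point $p \in X$,
\[
  2\bigl(\dim_{\mathbb{C}}(I_X)_2 - r_1\bigr) < \dim_{\mathbb{C}}(I_X)_2 + \kept_2(X),
\]
where $r_1 = \dim_{\mathbb{C}}(I_X)_2 - \kept_1(X)$ is the rank of the gradient map $\nabla|_{\widetilde p}$ from Part~\emph{vi}. Rearranged, this says $\kept_1(X) - \kept_2(X) < \kept_0(X) - \kept_1(X) = r_1$, that is, the second gradient drop is strictly smaller than the first. The mechanism: projecting from $p$ replaces $(I_X)_2$ by its subspace $\ker(\nabla|_{\widetilde p})$ of quadrics singular at $p$, so that a second general point $q$ of $X$ projects to a \emph{general} point $\bar q$ of $Y = \pi_{\{p\}}(X)$, and the drop $\kept_1(X) - \kept_2(X)$ is the rank of the gradient map on $Y$ at $\bar q$, which by Part~\emph{vii} of Lemma~\ref{l:qpGen} (applied to $Y$) is at most $\codim Y = \codim X - 1 < \codim X$. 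Since the first drop $r_1 = \kept_0(X) - \kept_1(X)$ equals $\dim_{\mathbb{C}}(I_X)_2 - \kept_1(X)$ and we separately know $r_1 \leqslant \codim X$ by Part~\emph{vii} applied to $X$, I need the strict comparison between the two drops rather than just both being $\leqslant \codim X$; the point is that the ambient codimension genuinely decreases by exactly one under each inner projection (the general point $p$ is nonsingular on the irreducible $X$, so $T_{X,p}$ has the expected dimension and the projection is generically finite onto its image of codimension $\codim X - 1$), while the drop $r_1$ is strictly positive since $(I_X)_2 \neq 0$ and $p$ general forces at least one gradient to be nonzero.

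Assembling: the general-point drop at stage $j$ is $\kept_{\!j-1}(X) - \kept_{\!j}(X)$, which is the first gradient drop for $Y_{j-1} \coloneqq \pi_{\Gamma_{j-1}}(X)$, namely $\operatorname{rank}(\nabla|_{\widetilde q})$ on $Y_{j-1}$; and the drop at stage $j+1$ is the first gradient drop for $Y_j = \pi_{\{\bar q\}}(Y_{j-1})$, which is bounded by $\codim Y_j = \codim Y_{j-1} - 1$ when $Y_{j-1}$ is still non-degenerate irreducible of positive codimension (guaranteed for $j \leqslant \qp(X)$, since then $(I_{Y_{j-1}})_2 \neq 0$ so $Y_{j-1}$ is not all of its ambient space — and here I would invoke that general inner projection of a non-degenerate irreducible variety is again non-degenerate irreducible). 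Meanwhile the drop at stage $j$ is at least $1$ (as $(I_{Y_{j-1}})_2 \neq 0$ and the point is general). Comparing via the ``$\codim$ drops by one'' identity yields the strict inequality. The main obstacle I anticipate is the careful bookkeeping in passing from general sets on $X$ to general points on successive projections — ensuring that a fibrewise-general choice stays general downstairs (this uses Part~\emph{iv} and Part~\emph{v} of Lemma~\ref{l:qpGen}) — together with confirming that each $Y_j$ for $j < \qp(X)$ remains non-degenerate, irreducible, and of codimension exactly $\codim X - j$, which is what makes the codimension count tight enough to force strictness rather than mere convexity.
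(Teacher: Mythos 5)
Your treatment of the first assertion (weakly decreasing, zero exactly from index $\qp(X)$ onward, hence $\qp(X)$ nonzero entries) is fine, and your reduction of the convexity inequality to the case $j=1$ via Part~\emph{i} of Lemma~\ref{l:qpGen} is a reasonable reformulation. But the core of the convexity argument has a genuine gap. Writing $\delta_{\!j} \coloneqq \kept_{\!j-1}(X) - \kept_{\!j}(X)$ for the successive drops, the inequality to be proven is $\delta_{\!j+1} < \delta_{\!j}$. What you actually establish is $\delta_{\!j+1} \leqslant \codim \pi_{\Gamma_{\!j}}(X) = \codim(X) - j$ and $1 \leqslant \delta_{\!j} \leqslant \codim(X) - j + 1$: two separate upper bounds, each by the ambient codimension of the relevant projection. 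These do not compare $\delta_{\!j+1}$ with $\delta_{\!j}$ unless $\delta_{\!j}$ happens to attain its maximal value $\codim(X) - j + 1$, which is false in general (for a general canonical curve the drops are far below the codimension). Concretely, $\delta_{\!j} = 1$ together with $\delta_{\!j+1} = 2 \leqslant \codim(X) - j$ is consistent with every bound you prove yet violates the proposition. A quantitative symptom of the insufficiency: your bounds alone would only give $\dim_{\mathbb{C}}(I_X)_2 = \sum_{j} \delta_{\!j} \geqslant \qp(X)$, whereas the convexity is precisely what upgrades this to the lower bound $\binom{\qp(X)+1}{2}$ in Corollary~\ref{c:qpIneq}.

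The paper's mechanism is entirely different, and this is where the geometry enters. With $V \coloneqq (I_{\pi_{\Gamma_{\!j-1}}(X)})_2$ and $W_i$ the subspace of quadrics in $(I_X)_2$ singular at $p_i$, a duality computation shows that the failure of strictness, $\delta_{\!j+1} \geqslant \delta_{\!j}$, forces $(V \cap W_{\!j}) + (V \cap W_{\!j+1}) = V$. Writing any $f \in V$ as $f_{\!j} + f_{\!j+1}$ with $f_i$ singular at $p_i$ and vanishing on $X$ shows that $f$ vanishes on the whole line through $p_{\!j}$ and $p_{\!j+1}$; hence every quadric in $V$ vanishes on the secant variety of $\pi_{\Gamma_{\!j-1}}(X)$, which contradicts non-degeneracy by Lemma~2.2 of \cite{LM}. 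Some input of this kind, beyond codimension counting on successive projections, is unavoidable, so you would need to supply an argument at this level to close the gap.
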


\begin{proof}
  Again for brevity, let $k \coloneqq \qp(X)$ and let
  $\kept_{\!j} \coloneqq \kept_{\!j}(X)$ for all $j \in \mathbb{N}$.  Choose a
  general set $\Gamma_k \coloneqq \{ p_1, p_2, \dotsc, p_k \} \subseteq X$ of
  closed points and set $\Gamma_{\!j} \coloneqq \{ p_1, p_2, \dotsc, p_{\!j} \}$.
  Part~\emph{ii} of Lemma~\ref{l:qpGen} demonstrates that
  $\kept_{\!j} \geqslant \kept_{\!j+1}$ and the definition of quadratic
  persistence implies that $\kept_{\!j} = 0$ if and only if
  $j \geqslant \qp(X)$.  If $\kept_{\!j} = \kept_{\!j-1}$ for some
  $0 < j \leqslant k$, then Part~\emph{vi} of Lemma~\ref{l:qpGen} produces a
  nonzero quadratic polynomial $f \in I_X$ which is singular at the closed
  point $p_{\!j}$.  Since the singular locus of $f$ is a linear subspace of
  $\mathbb{P}^n$, we conclude that $\pi_{\Gamma_{\!j-1}}(X)$ is degenerate,
  which contradicts the hypothesis that $X$ is non-degenerate or the genercity
  of $\Gamma_k$.  It follows that $\kept_{\!j-1} > \kept_{\!j}$ for all
  $0 < j \leqslant k$.

  To prove convexity, it suffices to show that, for all $0 < j \leqslant k$,
  the difference $\Delta \kept_{\!j} \coloneqq \kept_{\!j} - \kept_{\!j-1}$ is
  strictly increasing.  Let $W_{\!j}$ denote the linear subspace of quadrics
  in $(I_X)_2$ that are singular at the point $p_{\!j}$.  Hence, Part~\emph{i}
  of Lemma~\ref{l:qpGen} gives
  \[
    \Delta \kept_{\!j} = \dim_{\mathbb{C}} (W_1 \cap W_2 \cap \dotsb \cap W_{\!j})
    - \dim_{\mathbb{C}} (W_1 \cap W_2 \cap \dotsb \cap W_{\!j-1}) \, .
  \]
  Write $V \coloneqq \bigcap_{l=1}^{j-1} W_l \subseteq (I_X)_2$ and set
  $W_i^{\perp} \coloneqq \left\{ \psi \in V^* \; \middle| \;
    \text{$\psi(f) = 0$ for all $f \in W_i \cap V$} \right\}$ where $i = j$ or
  $i = j+1$.  It follows that $\Delta \kept_{\!j+1} > \Delta \kept_{\!j}$ is
  equivalent to
  $\dim_{\mathbb{C}} (\smash{W_{\!j}^\perp}) > \dim_{\mathbb{C}} \bigl(
  \smash{W_{\!j+1}^\perp} + \smash{W_{\!j}^\perp}) / \smash{W_{\!j}^\perp} \bigr)$.
  The latter relation holds if and only if
  $\smash{W_{\!j}^\perp} \cap \smash{W_{\!j+1}^\perp} \neq 0$ which, by duality,
  is the same as saying that $(V \cap W_{\!j}) + (V \cap W_{\!j+1}) \neq V$.  We
  establish this last inequality by contradiction.  Assuming that
  $(V \cap W_{\!j}) + (V \cap W_{\!j+1}) = V$, every quadratic polynomial
  $f \in V$ can be written as $f = f_{\!j} + f_{\!j+1}$ where
  $f_i \in V \cap W_i$.  Since $f$ is homogeneous, we see that it vanishes on
  the entire line passing through the closed points $p_{\!j}$ and $p_{\!j+1}$.  In
  other words, each quadratic polynomial in $V$ vanishes on the secant variety
  of $\pi_{\Gamma_{\!j-1}}(X)$.  However, Lemma~2.2 in \cite{LM} confirms that
  this contradicts the hypothesis that $X$ is non-degenerate.
\end{proof}

\begin{remark}
  The sequence $\kept(X)$ is closely related to the gap vector introduced
  in~\cite{BIJV}. To be more explicit, we must assume that
  $X \subseteq \mathbb{P}^n$ is a non-degenerate totally-real variety having
  codimension $c$.  If
  $g(X) \coloneqq \bigl( g_1(X), g_2(X), \dotsc, g_c(X) \bigr)$ is the gap
  vector from Definition~1.1 of \cite{BIJV}, then Theorem~1.6 in \cite{BIJV}
  implies that
  \[
    g_{\!j}(X) - \kept_{\!j}(X) = \tbinom{c+1}{2} - \dim_{\mathbb{C}} (I_X)_2 -
    \tbinom{c-j+1}{2} \, ,
  \]
  for all $1 \leqslant j \leqslant c$, and Theorem~1.7 in \cite{BIJV} proves
  that $\lambda(X)$ is an integer partition with distinct parts.  However, the
  convexity of $\lambda(X)$ reveals that the gap vector is also convex. For
  instance, the existence of an index $i$ such that $g_{i-1}(X) = g_i(X)$
  implies that $g_{\!j}(X) = 0$ for all $1 \leqslant j \leqslant i$.
\end{remark}

Using the properties of the sequence $\kept(X)$, we see that the quadratic
persistence bounds the dimension of the linear subspace of quadrics in the
defining ideal of a variety.

\begin{corollary}
  \label{c:qpIneq}
  For a non-degenerate irreducible complex subvariety
  $X \subseteq \mathbb{P}^n$, we have
  \[
    \binom{\qp(X)+1}{2} \leqslant \dim_{\mathbb{C}} (I_{X})_2 \leqslant \qp(X)
    \, \codim(X) - \binom{\qp(X)}{2} \, .
  \]
\end{corollary}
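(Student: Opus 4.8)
\noindent
The plan is to deduce both inequalities from the strict monotonicity and convexity of the sequence $\kept(X) = \bigl( \kept_0(X), \kept_1(X), \kept_2(X), \dotsc \bigr)$ recorded in Proposition~\ref{p:convex}, supplemented by a bound on how much the codimension can drop under a single inner projection.  Set $k \coloneqq \qp(X)$ and $\kept_{\!j} \coloneqq \kept_{\!j}(X)$.  If $k = 0$ then $(I_X)_2 = 0$ and all three quantities vanish, so assume $k \geqslant 1$; then $(I_X)_2 \neq 0$ and $X \neq \mathbb{P}^n$, which together with non-degeneracy forces $n \geqslant 1$, $\dim(X) \geqslant 1$, and $X$ not a linear subspace.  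Since projecting away from the empty set is the identity map, $\kept_0 = \dim_{\mathbb{C}} (I_X)_2$, and the definition of quadratic persistence gives $\kept_k = 0$, so telescoping yields $\dim_{\mathbb{C}} (I_X)_2 = \sum_{j=1}^k ( \kept_{\!j-1} - \kept_{\!j} )$.  It remains to bound the individual summands from below and from above.

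For the lower bound, Proposition~\ref{p:convex} shows that $\kept_0 > \kept_1 > \dotsb > \kept_k = 0$ and that the consecutive differences $\kept_{\!j-1} - \kept_{\!j}$ strictly decrease in $j$.  Being positive integers with $\kept_{k-1} - \kept_k = \kept_{k-1} \geqslant 1$, a downward induction on $j$ gives $\kept_{\!j-1} - \kept_{\!j} \geqslant k - j + 1$ for all $0 < j \leqslant k$, and summing produces $\dim_{\mathbb{C}} (I_X)_2 \geqslant \sum_{j=1}^k (k-j+1) = \binom{k+1}{2}$.

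For the upper bound, fix a general set $\Gamma_k = \{ p_1, p_2, \dotsc, p_k \}$ of closed points on $X$, write $\Gamma_{\!j} = \{ p_1, p_2, \dotsc, p_{\!j} \}$, and put $Y_{\!j-1} \coloneqq \pi_{\Gamma_{\!j-1}}(X) \subseteq \mathbb{P}^{n-j+1}$, so that $\dim_{\mathbb{C}} (I_{Y_{\!j-1}})_2 = \kept_{\!j-1}$.  Since $X$ is non-degenerate, the general point $p_{\!j}$ lies outside $\Span(\Gamma_{\!j-1})$, so its image $q \coloneqq \pi_{\Gamma_{\!j-1}}(p_{\!j})$ is a closed point of $Y_{\!j-1}$, and the factorization $\pi_{\Gamma_{\!j}} = \pi_{\{q\}} \circ \pi_{\Gamma_{\!j-1}}$ of linear projections gives $\kept_{\{q\}}(Y_{\!j-1}) = \kept_{\!j}$; hence Part~\emph{vii} of Lemma~\ref{l:qpGen} applied to $Y_{\!j-1}$ and $q$ yields $\kept_{\!j-1} - \kept_{\!j} \leqslant \codim \bigl( Y_{\!j-1}, \mathbb{P}^{n-j+1} \bigr)$.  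The step I expect to be the main obstacle is verifying that this codimension equals $\codim(X) - (j-1)$; equivalently, that $\dim(Y_{\!j-1}) = \dim(X)$.  My plan is an induction over the intermediate images $Y_i = \pi_{\Gamma_i}(X)$ for $0 \leqslant i \leqslant k-1$: each $Y_i$ is non-degenerate, because a hyperplane of $\mathbb{P}^{n-i}$ containing $\pi_{\Gamma_i}(X)$ pulls back to a hyperplane of $\mathbb{P}^n$ containing $X$, and $Y_i \neq \mathbb{P}^{n-i}$, because $\kept_i \neq 0$ for $i < k$; therefore $Y_i$ is not a linear subspace, so a general point of $Y_i$ is not a vertex and projection away from it is generically finite onto its image, preserving dimension.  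Starting from $Y_0 = X$, this gives $\dim(Y_i) = \dim(X)$ for $0 \leqslant i \leqslant k-1$, whence $\sum_{j=1}^k \codim \bigl( Y_{\!j-1}, \mathbb{P}^{n-j+1} \bigr) = \sum_{j=1}^k \bigl( \codim(X) - j + 1 \bigr) = \qp(X)\,\codim(X) - \binom{\qp(X)}{2}$, completing the bound.
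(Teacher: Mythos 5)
Your proposal is correct and follows essentially the same route as the paper: telescoping $\dim_{\mathbb{C}}(I_X)_2 = \sum_{j=1}^{k}(\kept_{\!j-1}-\kept_{\!j})$, bounding each difference below via the strict convexity and monotonicity of $\kept(X)$ from Proposition~\ref{p:convex}, and above via Part~\emph{vii} of Lemma~\ref{l:qpGen} applied to the successive images $\pi_{\Gamma_{\!j-1}}(X)$. The only difference is that you explicitly justify the equality $\codim\pi_{\Gamma_{\!j-1}}(X) = \codim(X) - (j-1)$, which the paper asserts without comment; your argument for it (non-degenerate, proper, irreducible, hence not a cone over a general point) is sound.
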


\begin{proof}
  Let $c \coloneqq \codim(X)$, let $k \coloneqq \qp(X)$, and let
  $\kept_{\!j} \coloneqq \kept_{\!j}(X)$ for all $j \in \mathbb{N}$.  We first
  bound the difference $\Delta \kept_{\!j}$ for all $0 < j \leqslant
  k$. Choose a general set
  $\Gamma_k \coloneqq \{ p_1, p_2, \dotsc, p_k \} \subseteq X$ of closed
  points.  Setting $\Gamma_{\!j} \coloneqq \{ p_1, p_2, \dotsc, p_{\!j} \}$
  for all $0 < j \leqslant k$, we see that
  $\codim \pi_{\Gamma_{\!j-1}}(X) = c - (j-1)$ and Part~\emph{vii} in
  Lemma~\ref{l:qpGen} gives
  \begin{align*}
    \Delta \kept_{\!j}
    = \kept_{\!j} - \kept_{\!j-1} 
    &= \bigl( \dim_{\mathbb{C}} (I_{\pi_{\Gamma_{\!j-1}}(X)})_2 - \kept_{\!j-1} \bigr) -
      \bigl( \dim_{\mathbb{C}} I_{\pi_{\Gamma_{\!j-1}}(X)} - \kept_{\!j} \bigr) \\
    &=  - \bigl( \dim_{\mathbb{C}} (I_{\pi_{\Gamma_{\!j-1}}(X)})_2 - \kept_{\!j}
      \bigr) \geqslant - \codim \pi_{\Gamma_{\!j-1}}(X) = (j-1) - c \, . 
  \end{align*}
  Combined with Proposition~\ref{p:convex}, we deduce that
  $-1 \geqslant \Delta \kept_{\!j} \geqslant (j-1) - c$.  By definition, we
  have
  $\dim_{\mathbb{C}} (I_X)_2 = \kept_0 = (\kept_0 - \kept_1) + (\kept_1 -
  \kept_2) + \dotsb + (\kept_{k-1} - \kept_k) = \sum_{j=1}^k (- \Delta
  \kept_{\!j})$, so
  \[
    \binom{k+1}{2} = \sum_{j=1}^k j \leqslant \dim_{\mathbb{C}} (I_X)_2
    \leqslant \sum_{j=1}^k c - (j-1) = k \, c - \binom{k}{2} \, . \qedhere
  \]
\end{proof}

\subsection*{Varieties with large quadratic persistence}

The bounds in Corollary~\ref{c:qpIneq} allow us to classify the subvarieties
with maximal quadratic persistence.  This classification simultaneously shows
that the upper and lower bounds can coincide.

\begin{theorem}
  \label{t:maxqp}
  For a non-degenerate irreducible complex subvariety
  $X \subseteq \mathbb{P}^n$, we have the equality $\qp(X) = \codim(X)$ if and
  only if $X$ has minimal degree, that is $\deg(X) = 1 + \codim(X)$.
\end{theorem}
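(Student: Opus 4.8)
The plan is to establish both directions by leveraging the sharp inequalities in Corollary~\ref{c:qpIneq} together with classical facts about varieties of minimal degree.

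First I would prove the forward direction. Suppose $\qp(X) = \codim(X) =: c$. Then the upper bound in Corollary~\ref{c:qpIneq} becomes an equality: $\dim_{\mathbb{C}}(I_X)_2 = c \cdot c - \binom{c}{2} = \binom{c+1}{2}$. Tracing through the proof of that corollary, equality forces $\Delta\lambda_{\!j} = (j-1) - c$ for every $0 < j \leqslant c$, which (via Part~\emph{vii} of Lemma~\ref{l:qpGen}) means that at each stage of a general inner projection the gradients of the quadrics impose the maximal number of conditions, i.e. $\dim_{\mathbb{C}}(I_{\pi_{\Gamma_{\!j}}(X)})_2 = \lambda_{\!j}(X) + \bigl(c - (j-1)\bigr)$ collapses appropriately until after $c$ projections no quadrics survive. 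The key consequence is that a general inner projection of $X$ to $\mathbb{P}^{n-c}$ sends $X$ onto a hypersurface of degree $\deg(X)$ (since $X$ is non-degenerate and irreducible of dimension $n-c$, a general projection to $\mathbb{P}^{\dim X + 1}$ is birational onto a hypersurface), and the number of quadrics lost at each step being exactly maximal is precisely the condition characterizing minimal degree: the variety has no "extra" quadrics, meaning $\dim_{\mathbb{C}}(I_X)_2 = \binom{c+1}{2}$ is the number of quadrics vanishing on a variety of minimal degree. One then cites the standard fact (e.g. via the argument in \cite{BPSV} or the classification behind Theorem~\ref{t:maxqp} of \cite{EH}) that a non-degenerate irreducible variety with $\dim_{\mathbb{C}}(I_X)_2 = \binom{\codim(X)+1}{2}$ and whose ideal is generated by these quadrics in the expected way has $\deg(X) = 1 + \codim(X)$; more cleanly, one shows the general projection drops $\deg X - 1$ worth of quadrics per point only when $\deg X = c+1$.

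For the converse, suppose $X$ has minimal degree, so $\deg(X) = 1 + c$. Varieties of minimal degree are $2$-regular (by the Del~Pezzo--Bertini classification and \cite{EGHP6}, or directly: they are quadric hypersurfaces, rational normal scrolls, or the Veronese surface and its cones, all $2$-regular). I would then show that a general inner projection of a minimal-degree variety from a point of $X$ is again a minimal-degree variety in $\mathbb{P}^{n-1}$, of the same dimension and degree $\deg(X) - 1 = c$ (this is classical for scrolls and cones over the Veronese; for quadrics it is immediate). Iterating $c$ times, after projecting from $c$ general points of $X$ the image has degree $1$, i.e. it is a linear subspace $\mathbb{P}^{\dim X}$ in $\mathbb{P}^{n-c} = \mathbb{P}^{\dim X}$, hence the whole space, so $I_{\pi_\Gamma(X)} = 0$ and in particular contains no quadrics. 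This gives $\qp(X) \leqslant c$, and combined with Lemma~\ref{l:qpBasic}(i) which gives $\qp(X) \leqslant c$ from above — wait, rather Lemma~\ref{l:qpBasic}(i) already gives $\qp(X) \leqslant c$; what I actually need for the converse is the reverse inequality $\qp(X) \geqslant c$. That follows from the lower bound $\binom{\qp(X)+1}{2} \leqslant \dim_{\mathbb{C}}(I_X)_2$ in Corollary~\ref{c:qpIneq} once I know $\dim_{\mathbb{C}}(I_X)_2 = \binom{c+1}{2}$ for a minimal-degree variety (a standard computation via the $2$-linear resolution / Eagon--Northcott complex). So for the converse the real content is just this dimension count plus the inequality.

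The main obstacle I anticipate is the forward direction: extracting, from the chain of equalities in Corollary~\ref{c:qpIneq}, the geometric conclusion that $\deg X = 1+c$ rather than merely the numerical statement $\dim_{\mathbb{C}}(I_X)_2 = \binom{c+1}{2}$. The cleanest route is probably to argue that equality in the upper bound forces every general inner projection $\pi_{\Gamma_{\!j}}(X)$ to satisfy $\dim_{\mathbb{C}}(I_{\pi_{\Gamma_{\!j}}(X)})_2 = \binom{c-j+1}{2}$, so in particular projecting to $\mathbb{P}^{\dim X + 1}$ (i.e. $j = c-1$) yields a hypersurface cut out by a single quadric — whence $X$ itself lies on enough quadrics to have the Hilbert function of a minimal-degree variety, and one invokes the known fact that a non-degenerate irreducible variety whose degree equals $1 + \codim$ is equivalent to having this quadric-count and quadratic generation (this is essentially Theorem~1.1 of \cite{BPSV}, which the present theorem strengthens). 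I would lean on that prior result to close the loop rather than reproving the minimal-degree classification from scratch.
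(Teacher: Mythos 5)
Your forward direction is essentially the paper's argument: once $\qp(X) = \codim(X) = c$, the two bounds in Corollary~\ref{c:qpIneq} coincide and force $\dim_{\mathbb{C}}(I_X)_2 = \binom{c+1}{2}$, and one then invokes the classical Castelnuovo-type fact (Corollary~5.8 in \cite{Zak}) that, for a non-degenerate irreducible variety, this quadric count is \emph{equivalent} to having minimal degree. No hypothesis about quadratic generation or about general projections being birational onto hypersurfaces is needed, so most of the extra machinery you describe (tracing $\Delta\lambda_{j}$ through the proof of the corollary, the hypersurface image of a general projection) can be discarded; the numerical equality plus the cited equivalence already closes that direction.

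The converse, however, has a genuine gap. You correctly isolate the real content as the inequality $\qp(X) \geqslant c$, but you then assert that it ``follows from the lower bound $\binom{\qp(X)+1}{2} \leqslant \dim_{\mathbb{C}}(I_X)_2$'' once $\dim_{\mathbb{C}}(I_X)_2 = \binom{c+1}{2}$ is known. That inequality reads $\binom{\qp(X)+1}{2} \leqslant \binom{c+1}{2}$ and therefore yields $\qp(X) \leqslant c$ --- the direction you already have from Lemma~\ref{l:qpBasic} and from your iterated-projection argument, not the one you need. The missing step comes from the \emph{upper} bound in Corollary~\ref{c:qpIneq}: setting $k \coloneqq \qp(X)$, minimal degree gives $\binom{c+1}{2} = \dim_{\mathbb{C}}(I_X)_2 \leqslant kc - \binom{k}{2}$, which is equivalent to $(c-k)(c+k+1) \leqslant 2k(c-k)$; if $c - k > 0$ this forces $c + 1 \leqslant k$, contradicting $k \leqslant c$, so $k = c$. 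This is exactly how the paper argues. As written, your proof of the converse does not close, even though the correct ingredient sits in the same corollary you cite.
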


\begin{proof}
  Set $c \coloneqq \codim(X)$ and $k \coloneqq \qp(X)$.  The hypothesis
  $k = c$ implies that $kc - \binom{k}{2} = \binom{k+1}{2}$, so the bounds in
  Corollary~\ref{c:qpIneq} are equal.  It is well-known, going back to
  G.~Castelnuovo, that the equality
  $\dim_{\mathbb{C}} (I_X)_2 = \binom{c+1}{2}$ is equivalent to $X$ being a
  variety of minimal degree; see Corollary~5.8 in \cite{Zak}.  Conversely,
  assuming that $X$ has minimal degree, we have
  $\dim_{\mathbb{C}} (I_X)_2 = \binom{c+1}{2}$ and the bounds in
  Corollary~\ref{c:qpIneq} become
  $0 \leqslant \binom{c+1}{2} - \binom{k+1}{2} \leqslant kc - k^2$ or
  $0 \leqslant (c-k)(c+k+1) \leqslant (c-k)k$.  Since Part~\emph{i} in
  Lemma~\ref{l:qpBasic} establishes that $c-k \geqslant 0$, the strict
  inequality $c - k > 0$ would imply that $c+k+1 \leqslant k$ which is absurd.
  We conclude that $c = k$ when $X$ has minimal degree.
\end{proof}

To expand on this classification, we look at another prominent numerical
invariant of a variety.  Following Section~3 of \cite{BSV16}, the
\define{quadratic deficiency} of the projective subvariety
$X \subseteq \mathbb{P}^n$ is
$\varepsilon(X) \coloneqq \binom{\codim(X) + 1}{2} - \dim_{\mathbb{C}}
(I_X)_2$.  From this perspective, Theorem~\ref{t:maxqp} proves that
$\varepsilon(X) = 0$ if and only if $\qp(X) = \codim(X)$.  For the
subvarieties having small positive quadratic deficiency, we have the following
one-way implication.

\begin{proposition}
  \label{p:one}
  For a non-degenerate irreducible complex subvariety
  $X \subseteq \mathbb{P}^n$ such that either $\varepsilon(X) = 1$ or
  $\varepsilon(X) = 2$, we have $\qp(X) = \codim(X) - 1$.
\end{proposition}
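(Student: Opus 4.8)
The plan is to squeeze $\qp(X)$ between $\codim(X)-1$ and $\codim(X)-1$ using nothing more than the two-sided estimate in Corollary~\ref{c:qpIneq}, after rewriting everything in terms of the quadratic deficiency. Throughout, write $c \coloneqq \codim(X)$, $k \coloneqq \qp(X)$, and recall that $\dim_{\mathbb{C}}(I_X)_2 = \binom{c+1}{2} - \varepsilon(X)$ by definition of $\varepsilon(X)$.

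First I would establish the upper bound $k \leqslant c-1$. Since $\varepsilon(X) \geqslant 1$ we have $\dim_{\mathbb{C}}(I_X)_2 < \binom{c+1}{2}$, whereas the left-hand inequality in Corollary~\ref{c:qpIneq} gives $\binom{k+1}{2} \leqslant \dim_{\mathbb{C}}(I_X)_2$. Hence $\binom{k+1}{2} < \binom{c+1}{2}$, and because $r \mapsto \binom{r+1}{2}$ is strictly increasing on $\mathbb{N}$, this forces $k < c$. (One could equally cite Theorem~\ref{t:maxqp}, since $\qp(X) = \codim(X)$ holds exactly when $\varepsilon(X) = 0$.)

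The heart of the argument is the matching lower bound $k \geqslant c-1$, which I would extract from the right-hand inequality $\dim_{\mathbb{C}}(I_X)_2 \leqslant kc - \binom{k}{2}$ of Corollary~\ref{c:qpIneq}. Set $j \coloneqq c - k$; this is a nonnegative integer by part~i of Lemma~\ref{l:qpBasic}, and $j \geqslant 1$ by the previous paragraph. A short binomial identity yields $kc - \binom{k}{2} = \binom{c+1}{2} - \binom{j+1}{2}$, so substituting gives $\varepsilon(X) = \binom{c+1}{2} - \dim_{\mathbb{C}}(I_X)_2 \geqslant \binom{j+1}{2}$. As $\varepsilon(X) \leqslant 2$ while $\binom{3}{2} = 3 > 2$, this rules out $j \geqslant 2$, leaving $j = 1$, that is, $\qp(X) = \codim(X) - 1$.

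The only real obstacle is verifying the identity $kc - \binom{k}{2} = \binom{c+1}{2} - \binom{j+1}{2}$ with $j = c-k$, and even that is routine: expanding both sides, the quadratic and linear terms in $c$ (and the cross terms $\pm 2jc$) cancel, leaving $\tfrac{1}{2}(j^2 + j) = \binom{j+1}{2}$. Everything else is bookkeeping with the two already-proved inequalities. Note that the two cases $\varepsilon(X) = 1$ and $\varepsilon(X) = 2$ are handled uniformly — both simply feed into $\binom{j+1}{2} \leqslant 2$ — and that only the running hypotheses (non-degeneracy and irreducibility, needed for Corollary~\ref{c:qpIneq} and Lemma~\ref{l:qpBasic}) are used.
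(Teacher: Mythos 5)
Your argument is correct and is essentially the paper's own proof: the paper likewise rewrites the two-sided estimate of Corollary~\ref{c:qpIneq} as $\binom{c-k+1}{2} \leqslant \varepsilon(X) \leqslant \binom{c+1}{2}-\binom{k+1}{2}$, deduces $(c-k+1)(c-k) \leqslant 2\varepsilon(X) \leqslant 4$, and combines this with $c-k \geqslant 1$ (obtained from Lemma~\ref{l:qpBasic} and Theorem~\ref{t:maxqp}, just as you note parenthetically) to force $c-k=1$. The only cosmetic difference is that you derive $k<c$ directly from the left-hand inequality of Corollary~\ref{c:qpIneq} rather than citing Theorem~\ref{t:maxqp}, which is equally valid.
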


\begin{proof}
  Let $c \coloneqq \codim(X)$ and let $k \coloneqq \qp(X)$.  The inequalities
  in Corollary~\ref{c:qpIneq} are equivalent to
  $\binom{c - k + 1}{2} \leqslant \varepsilon(X) \leqslant \binom{c + 1}{2} -
  \binom{k + 1}{2}$.  From this lower bound on $\varepsilon(X)$ and our
  hypothesis on $\varepsilon(X)$, we deduce that
  $(c-k+1)(c-k) \leqslant 2\, \varepsilon(X) \leqslant 4$. Together
  Part~\emph{i} in Lemma~\ref{l:qpBasic} and Theorem~\ref{t:maxqp} establish
  that $c-k \geqslant 1$.  Since $c-k \in \mathbb{Z}$, we infer that
  $c-k+1 = 2$ and $c-k = 1$, so $k = c-1$.
\end{proof}

\begin{remark}
  \label{r:Zak}
  Proposition~5.10 in \cite{Zak} proves that a projective subvariety
  $X \subset \mathbb{P}^n$ with $\varepsilon(X) = 1$ is a hypersurface of
  degree at least $3$ or linearly-normal variety such that
  $\deg(X) = 2 + \codim(X)$.  Corollary~1.4 in \cite{Par} proves that, for a
  subvariety $X \subset \mathbb{P}^n$ satisfying $\codim(X) \geqslant 3$ and
  $\varepsilon(X) = 2$, the pair
  $\bigl( \deg(X), \operatorname{depth}(X) \bigr)$ is either
  $\bigl( 2 + \codim(X), \dim(X) \bigr)$ or
  $\bigl( 3 + \codim(X), 1 + \dim(X) \bigr)$.
\end{remark}

When $\varepsilon(X) = 1$, Proposition~\ref{p:one} shows that the upper bound
in Corollary~\ref{c:qpIneq} is achieved.  It also shows that the lower bound
is attained when $\varepsilon(X) = 2$ and $\codim(X) = 2$, which means that
$X$ is a complete intersection of two quadrics.  Extending both cases, the
subsequent family of varieties have almost maximal quadratic persistence and a
minimal number of quadratic generators.

\begin{example}[Extremal varieties with almost maximal quadratic persistence]
  \label{e:almost}
  Let $X \subset \mathbb{P}^n$ be the intersection of a general hypersurface
  of degree at least two with a variety $X' \subset \mathbb{P}^n$ of minimal
  degree.  If the hypersurface has degree greater than two, then it follows
  that $(I_X)_2 = (I_{X'})_2$ and
  \[
    \varepsilon(X) = \tbinom{\codim(X)+1}{2} - \tbinom{\codim(X)}{2} =
    \codim(X) \, .
  \]
  When the hypersurface has degree two, we have
  $\dim_{\mathbb{C}}(I_X)_2 = 1 + \dim_{\mathbb{C}}(I_{X'})_2$ and
  \[
    \varepsilon(X) = \tbinom{\codim(X')+2}{2} - \tbinom{\codim(X')+1}{2} - 1 =
    \codim(X) - 1 \, .
  \]
  In both cases, parts~\emph{i} and \emph{ii} in Lemma~\ref{l:qpBasic} show
  that $\codim(X') + 1 = \codim(X) \geqslant \qp(X) \geqslant \qp(X')$.  Using
  Theorem~\ref{t:maxqp} twice, we also see that $\codim(X) > \qp(X)$ and
  $\qp(X') = \codim(X')$.  Thus, we surmise that
  $\qp(X) = \codim(X') = \codim(X) - 1$.
\end{example}

Under the additional assumption that $X \subseteq \mathbb{P}^n$ is
arithmetically Cohen--Macaulay, the two extremal possibilities become the only
options.  To explain this, we start on the $0$\nobreakdash-dimensional case
with an analogue of the Strong Castelnuovo Lemma; see Theorem~3.c.6 in
\cite{Gre}.

\begin{lemma}
  \label{l:0dim}
  Let $n \geqslant 2$ and let $X \subset \mathbb{P}^n$ be a set of closed
  points in linearly general position. We have $\qp(X) \geqslant n - 1$ if and
  only if $X$ lies on a rational normal curve.
\end{lemma}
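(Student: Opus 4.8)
The plan is to deduce this from the Strong Castelnuovo Lemma (Theorem~3.c.6 in \cite{Gre}) together with Theorem~\ref{t:2}, which relates quadratic persistence to the number $\len(X)$ of nonzero entries in the first row of the Betti table. Recall that for a set $X$ of $m$ points in linearly general position in $\mathbb{P}^n$, the Strong Castelnuovo Lemma asserts that the points impose at most $2n+1-p$ conditions on quadrics (equivalently, $K_{p,1}(X) \neq 0$) for some $p \geqslant 2$ if and only if the points lie on a rational normal curve; and in the extremal case $p = n-1$ the relevant linear strand of the resolution is as long as possible. Since $X$ is $0$-dimensional and non-degenerate, Lemma~\ref{l:qpBasic} gives $\qp(X) \leqslant \codim(X) = n$, and the hypothesis $n \geqslant 2$ puts us in the range where the classification is nontrivial.

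First I would dispose of the easy direction. If $X$ lies on a rational normal curve $C \subseteq \mathbb{P}^n$, then $X \subseteq C$ gives $\qp(X) \geqslant \qp(C)$ by part~\emph{ii} of Lemma~\ref{l:qpBasic}; since $C$ is a variety of minimal degree, Theorem~\ref{t:maxqp} yields $\qp(C) = \codim(C) = n-1$, so $\qp(X) \geqslant n-1$.

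For the converse, suppose $\qp(X) \geqslant n-1$. Here I would argue that a large quadratic persistence forces a long linear strand: by Theorem~\ref{t:2} applied to (the closure/span situation of) $X$ — or more directly by tracking how the quadrics in $I_X$ drop in dimension under successive general inner projections via Proposition~\ref{p:convex} and the formula $\dim_{\mathbb{C}}(I_X)_2 = \sum_{j=1}^{\qp(X)}(-\Delta\lambda_j)$ from Corollary~\ref{c:qpIneq} — one sees that $\qp(X) \geqslant n-1$ compels the points to impose only $2n+1-(n-1) = n+2$ conditions on quadrics in the projected configuration, which is exactly the extremal Castelnuovo bound. Concretely, since the points are in linearly general position, projecting from $n-1$ of them produces a configuration in $\mathbb{P}^1$, and the requirement that $I_{\pi_\Gamma(X)}$ still contain a quadric for \emph{every} choice of $n-1$ points is precisely the numerical condition in the Strong Castelnuovo Lemma guaranteeing that $X$ lies on a rational normal curve. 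I would then invoke Theorem~3.c.6 in \cite{Gre} to conclude.

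The main obstacle I anticipate is matching the quadratic-persistence condition "$I_{\pi_\Gamma(X)}$ contains a quadric for all $\Gamma$ with $|\Gamma| = n-2$" precisely against the Green–Lazarsfeld hypothesis "$K_{n-1,1}(X) \neq 0$" in the Strong Castelnuovo Lemma — i.e., verifying that $\qp(X) \geqslant n-1$ translates exactly into $\len(X) \geqslant n-1$ (and not merely $\geqslant n-2$) for a $0$-dimensional scheme, using Theorem~\ref{t:2} in the correct direction. Since Theorem~\ref{t:2} gives $\qp(X) \geqslant \len(X)$, one needs a matching reverse estimate in this very special situation (points in linearly general position), which should follow from the combinatorics of the sequence $\lambda(X)$: convexity plus the bound $-\Delta\lambda_j \geqslant 1$ pins down the first row of the Betti table tightly enough that the inequality of Theorem~\ref{t:2} becomes an equality for point sets, closing the loop.
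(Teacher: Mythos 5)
Your easy direction is fine and matches the paper: $X \subseteq C$ gives $\qp(X) \geqslant \qp(C) = \codim(C) = n-1$ via part~\emph{ii} of Lemma~\ref{l:qpBasic} and Theorem~\ref{t:maxqp}. The converse, however, has a genuine gap at exactly the point you flag. Your plan requires passing from $\qp(X) \geqslant n-1$ to $\len(X) \geqslant n-1$ (equivalently $K_{n-1,1}(X) \neq 0$) so that the Strong Castelnuovo Lemma applies, but Theorem~\ref{t:2} gives only $\qp(X) \geqslant \len(X)$ --- the wrong direction --- and this inequality does not reverse in general: Examples~\ref{e:strict} and \ref{e:high} exhibit varieties with $\qp(X) > \len(X)$ strictly. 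Your proposed repair via the convexity of the sequence $\kept(X)$ does not close this: Proposition~\ref{p:convex} and Corollary~\ref{c:qpIneq} are stated and proved for \emph{irreducible} subvarieties (the convexity argument uses non-degeneracy of secant varieties via Lemma~2.2 of \cite{LM}), so they do not apply to a finite, hence reducible, point set; and even granting the combinatorics of $\kept(X)$, nothing in it controls the Betti numbers tightly enough to force $\len(X) = \qp(X)$. A smaller but real slip: $\qp(X) \geqslant n-1$ says that projecting from any $n-2$ points of $X$ leaves a configuration in $\mathbb{P}^2$ still lying on a conic; your translation into ``$n+2$ conditions on quadrics'' after projecting from $n-1$ points into $\mathbb{P}^1$ does not match the definition.

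The paper avoids the Koszul-cohomology route entirely and argues by induction on $n$. The base case $n=2$ is immediate ($\qp(X)\geqslant 1$ means $X$ lies on a conic). For $n>2$, one may assume $\abs{X} > n+3$ (any $n+3$ points in linearly general position lie on a unique rational normal curve); fixing $\Gamma = \{p_1,\dotsc,p_{n+3}\}\subset X$ with its unique rational normal curve $C$, part~\emph{v} of Lemma~\ref{l:qpGen} gives $\qp\bigl(\pi_{\{p_i\}}(X\setminus\{p_i\})\bigr)\geqslant n-2$, so by induction each projection lies on a rational normal curve, which must be $\pi_{\{p_i\}}(C)$. The key remaining step --- which your outline has no counterpart for --- is the explicit determinantal computation showing that $I_{\pi_{\{p_1\}}(C)} + I_{\pi_{\{p_2\}}(C)}$ cuts out the union of $C$ and the line $\Span(\{p_1,p_2\})$; since no three points of $X$ are collinear, this forces $X \subset C$. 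If you want to salvage your approach, you would need an independent proof that $\len(X) \geqslant n-1$ for such point sets, which is essentially as hard as the lemma itself.
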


\begin{proof}
  When $X$ lies on a rational normal curve $C \subset \mathbb{P}^n$,
  Part~\emph{ii} of Lemma~\ref{l:qpBasic} demonstrates that
  $\qp(X) \geqslant \qp(C)$ and Theorem~\ref{t:maxqp} establishes that
  $\qp(C) = \codim(C) = n - 1$.

  For the other implication, suppose that $\qp(X) \geqslant n-1$.  We proceed
  by induction on $n$.  For $n = 2$, we have $\qp(X) \geqslant 1$ if and only
  if $X$ lies on a quadratic curve, which is a rational normal curve in
  $\mathbb{P}^2$.  We now assume that $n > 2$.  Every set of $n + 3$ closed
  points in linearly general position in $\mathbb{P}^n$ lies on a unique
  rational normal curve; see Theorem~1.18 in \cite{Har}.  Hence, we may also
  assume that $\abs{X} > n+3$.  Let
  $\Gamma \coloneqq \{ p_1, p_2, \dotsc, p_{n+3} \} \subset X$ be a set of
  closed points and let $C$ be the unique rational normal curve containing
  $\Gamma$.  For any point $p_i \in \Gamma$, the set
  $X' \coloneqq \pi_{\{p_i\}}(X \setminus \{p_i\})$ is in linearly general
  position.  Part~\emph{v} of Lemma~\ref{l:qpGen} shows that
  $\qp(X') \geqslant \qp(X) - 1 \geqslant n - 2$ and the induction hypothesis
  shows that the set $X'$ is contained in a rational normal curve
  $C' \subset \mathbb{P}^{n-1}$.  As $C'$ and $\pi_{\{p_i\}}(C)$ are rational
  normal curves passing through the $n+2$ points in
  $\pi_{\{p_i\}}(\Gamma \setminus \{p_i\})$, we see that
  $C' = \pi_{\{p\}}(C)$.  It follows that, for all $p_i \in \Gamma$, the ideal
  $I_X$ contains the ideal of the cone over $\pi_{\{p_i\}}(C)$ with vertex
  $p_i$.

  We next describe the ideals of $\pi_{\{p_i\}}(C)$ more explicitly; compare
  with Exercise~1.25 in \cite{Har}.  Fix two distinct points
  $p_1, p_2\in \Gamma$ and an isomorphism
  $\nu \colon \mathbb{P}^1 \rightarrow C$. Choose coordinates on
  $\mathbb{P}^1$ such that $\nu([0 : 1]) = p_1$ and $\nu([1 : 0]) = p_2$ and
  choose coordinates on $\mathbb{P}^n$ such that the morphism $\nu$ is given
  by $[t_0 : t_1] \mapsto [t_0^n : t_0^{n-1} t_1^{} : \dotsb : t_1^n]$.  In
  these coordinates, the ideals $I_C$, $I_{\pi_{\{p_1\}}(C)}$, and
  $I_{\pi_{\{p_2\}}(C)}$ are given by the maximal minors of the matrices
  \begin{align*}
    &
      \begin{bmatrix}
        x_0 & x_1 & \dotsb & x_{n-1} \\[-2pt]
        x_1 & x_2 & \dotsb & x_{n} \\
      \end{bmatrix} \, ,
    &&\begin{bmatrix}
      x_1 & x_2 & \dotsb & x_{n-1} \\[-2pt]
      x_2 & x_3 & \dotsb & x_{n} \\
    \end{bmatrix} \, ,
    &&\text{and}
    &&\begin{bmatrix}
      x_0 & x_1 & \dotsb & x_{n-2} \\[-2pt]
      x_1 & x_2 & \dotsb & x_{n-1} \\
    \end{bmatrix} 
  \end{align*}
  respectively.  Setting
  $J \coloneqq I_{\pi_{\{p_1\}}(C)} + I_{\pi_{\{p_2\}}(C)}$, the previous
  paragraph proves that $J \subseteq I_X$.  For all
  $1 \leqslant j \leqslant n-1$, we have
  $x_{\!j} \, x_0 \, x_n = x_{\!j-1} \, x_1 \, x_n = x_{\!j} \, x_{n-1} \, x_1
  \pmod{J}$, so $x_{\!j} ( x_0 x_n - x_1 x_{n-1}) \in J$ and
  $I_C = J : \ideal{x_1, x_2, \dotsc, x_{n-1}}$.  Hence, the reduced scheme
  defined by $J$ is the union of the rational normal curve $C$ and the line
  through the closed points $p_1$ and $p_2$.  Since no three points of $X$ are
  collinear and $X \subset \operatorname{V}(J)$, we deduce that $X \subset C$.
\end{proof}

\begin{remark}
  One cannot extend the argument in Lemma~\ref{l:0dim} to higher-dimensional
  varieties by constructing the determinantal representations of rational
  normal scrolls as in Example~9.15 of \cite{Har}.  For instance, consider the
  irreducible curve $X$ lying on the Veronese surface
  $\nu_2(\mathbb{P}^2) \subset \mathbb{P}^5$ obtained by intersecting
  $\nu_2(\mathbb{P}^2)$ with a general cubic hypersurface. For all closed
  points $p \in X$, the projection $\pi_{ \{ p \}}(X)$ is contained in a
  $2$\nobreakdash-dimensional rational normal scroll.  However, the curve $X$
  is not contained in a $2$\nobreakdash-dimensional rational normal scroll,
  because the quadrics in $I_X$ define $\nu_2(\mathbb{P}^2)$.
\end{remark}

We now turn to the higher-dimensional situation.  

\begin{theorem}
  \label{t:acm}
  For a non-degenerate irreducible complex subvariety
  $X \subseteq \mathbb{P}^n$ that is arithmetically Cohen--Macaulay, we have
  $\qp(X) = \codim(X) - 1$ if and only if either $\varepsilon(X) = 1$ or $X$
  is a codimension-one subvariety in a variety of minimal degree.
\end{theorem}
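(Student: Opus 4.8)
I would establish the two implications separately.

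\emph{The direction $\Leftarrow$.} If $\varepsilon(X) = 1$, then Proposition~\ref{p:one} gives $\qp(X) = \codim(X) - 1$ immediately. If instead $X$ is a codimension-one subvariety of an irreducible variety $X'$ of minimal degree, we may assume $X$ is not itself of minimal degree (otherwise $\qp(X) = \codim(X)$ by Theorem~\ref{t:maxqp}, so there is nothing to prove). Theorem~\ref{t:maxqp} and part~\emph{ii} of Lemma~\ref{l:qpBasic} then give $\qp(X) \geqslant \qp(X') = \codim(X') = \codim(X) - 1$, while part~\emph{i} of Lemma~\ref{l:qpBasic} gives $\qp(X) \leqslant \codim(X)$, with equality excluded by Theorem~\ref{t:maxqp}. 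Hence $\qp(X) = \codim(X) - 1$.

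\emph{The direction $\Rightarrow$.} Put $c \coloneqq \codim(X)$ and assume $\qp(X) = c - 1$. Corollary~\ref{c:qpIneq} forces $1 \leqslant \varepsilon(X) \leqslant c$; if $\varepsilon(X) = 1$ we are done, so assume $\varepsilon(X) \geqslant 2$, whence $c \geqslant 2$ and $X$ is not of minimal degree by Theorem~\ref{t:maxqp}. The strategy is to descend to dimension zero by general hyperplane sections, apply Lemma~\ref{l:0dim}, and then lift back up. For a general hyperplane $H$, the subvariety $X \cap H \subseteq \mathbb{P}^{n-1}$ is again non-degenerate and arithmetically Cohen--Macaulay with $\codim(X \cap H) = c$ and not of minimal degree, and the reduction map $(I_X)_2 \to (I_{X \cap H})_2$ is an isomorphism, so $\varepsilon(X \cap H) = \varepsilon(X)$. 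Since a quadric singular at a point of $H$ restricts to a quadric singular at that point and this restriction is injective, part~\emph{iii} of Lemma~\ref{l:qpGen} gives $\lambda_j(X \cap H) \geqslant \lambda_j(X)$ for every $j$; together with Proposition~\ref{p:convex} this yields $\qp(X \cap H) \geqslant c - 1$, and Theorem~\ref{t:maxqp} upgrades this to $\qp(X \cap H) = c - 1$. Iterating, and using the Uniform Position Principle to keep the sections in linearly general position, we arrive at a finite set $\Gamma$ of points in linearly general position in $\mathbb{P}^c$ with $\qp(\Gamma) = c - 1$; Lemma~\ref{l:0dim} then shows that $\Gamma$ lies on a rational normal curve, an irreducible variety of minimal degree of dimension one more than $\dim(\Gamma)$.

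\emph{Lifting, and the main obstacle.} It now suffices, going back up one hyperplane at a time, to solve the following lifting problem: given an arithmetically Cohen--Macaulay $X$ with $\qp(X) = c - 1$ and $\varepsilon(X) \geqslant 2$ whose general hyperplane section $X \cap H$ lies on an irreducible variety $Y_H$ of minimal degree with $\dim(Y_H) = \dim(X \cap H) + 1$, produce such a variety for $X$ itself. Because a variety of minimal degree is $2$-regular, $Y_H = \operatorname{V}\bigl((I_{Y_H})_2\bigr)$ and $\dim_{\mathbb{C}}(I_{Y_H})_2 = \binom{c}{2}$; transporting $(I_{Y_H})_2$ back through the reduction isomorphism $(I_X)_2 \cong (I_{X \cap H})_2$ produces a subspace $W \subseteq (I_X)_2$ with $\dim_{\mathbb{C}} W = \binom{c}{2}$, $X \subseteq Y \coloneqq \operatorname{V}(W)$, and $Y \cap H = Y_H$. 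For general $H$ this forces $\dim(Y) = \dim(X) + 1$ and a unique top-dimensional component $Z$ of $Y$ with $Z \cap H = Y_H$, so that $\deg(Z) = \deg(Y_H) = c = 1 + \codim(Z)$ and $Z$ is an irreducible variety of minimal degree of the correct dimension. The crux is to conclude that $X \subseteq Z$ rather than $X$ being a stray $\dim(X)$-dimensional component of $\operatorname{V}(W)$: this is automatic when $c = 2$, where $Y$ is a single quadric hypersurface, but for $c \geqslant 3$ it requires showing that $W$ inherits through the Cohen--Macaulay lifting of the minimal free resolution the determinantal (linearly resolved) shape of $(I_{Y_H})_2$, so that $\operatorname{V}(W)$ is itself of minimal degree, hence irreducible and equal to $Z$. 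I expect this transfer of the minimal-degree structure from a general hyperplane section up to $X$ to be the delicate part of the argument and the place where the arithmetically Cohen--Macaulay hypothesis is indispensable; granting it, $X$ is a codimension-one subvariety of the minimal-degree variety $Z$, which completes the proof.
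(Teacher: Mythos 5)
Your backward direction and the descent to a zero-dimensional section are sound, and in places cleaner than the paper's own treatment: the monotonicity $\qp(X) \geqslant \qp(X') = \codim(X')$ from Lemma~\ref{l:qpBasic} and Theorem~\ref{t:maxqp} handles an arbitrary codimension-one subvariety of a minimal-degree variety (the paper only cites Example~\ref{e:almost}, which covers hypersurface sections), and your semicontinuity argument that $\kept_j(X \cap H) \geqslant \kept_j(X)$, hence $\qp(X \cap H) \geqslant c-1$, supplies a step the paper leaves implicit before invoking Lemma~\ref{l:0dim}. (Do note, though, that ``otherwise $\qp(X) = \codim(X)$, so there is nothing to prove'' is backwards: if $X$ itself had minimal degree while sitting as a divisor in a minimal-degree variety, the implication you are proving would be \emph{false}, not vacuous; this boundary case is a defect of the statement rather than of your argument, but it should not be waved away as harmless.)

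The genuine gap is the lifting step, and it is not a technicality you can grant. Your plan is to transport $(I_{Y_H})_2$ back to a subspace $W \subseteq (I_X)_2$ and argue that $X$ lies in the unique top-dimensional component $Z$ of $\operatorname{V}(W)$. But $W$, and therefore $Z$, depends on the chosen hyperplane $H$: all you can extract is $X \cap H \subseteq Z \cap H = Y_H$ for that single $H$, which gives $\dim (X \cap Z) \geqslant \dim(X) - 1$ and does not rule out $X$ being a stray top-minus-one-dimensional component of $\operatorname{V}(W)$ disjoint from $Z$ away from $H$. You cannot vary $H$ to force $X \subseteq Z$, and the ``transfer of the determinantal shape'' you hope for is precisely the hard content of Green's $K_{p,1}$-Theorem, not something the Cohen--Macaulay hypothesis hands you directly. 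The paper avoids this geometric lifting entirely: from Lemma~\ref{l:0dim} it passes to the \emph{homological} statement $\len(Z) \geqslant \codim(Z) - 1$ (Corollary~1.26 in \cite{AN}), uses the ACM hypothesis only to transfer the length of the linear strand from the Artinian reduction back to $X$ (Lemma~2.19 in \cite{AN}, since general hyperplanes form a regular sequence and Betti tables are preserved), and then invokes Green's $K_{p,1}$-Theorem on $X$ itself to obtain the dichotomy $\deg(X) \leqslant 2 + \codim(X)$ or $X$ lies in a minimal-degree variety of dimension $1 + \dim(X)$; Theorem~\ref{t:maxqp} and Remark~\ref{r:Zak} then finish. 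Until you either prove your lifting lemma or reroute through $\len$ and Green's theorem as the paper does, the forward direction is incomplete.
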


\begin{proof}
  Proposition~\ref{p:one} and Example~\ref{e:almost} prove one direction.  For
  the other direction, assume that $\qp(X) = \codim(X) - 1$.  We proceed by
  induction on $d \coloneqq \dim(X)$.  The Bertini Theorem (Theorem~17.16 in
  \cite{Har}) implies that the intersection of $X$ with $d$ general
  hyperplanes yields a set $Z$ of closed points in linearly general position.
  Since Lemma~\ref{l:0dim} demonstrates that the homogeneous ideal $I_Z$
  contains the ideal of a rational normal curve in $C$, Corollary~1.26 in
  \cite{AN} proves that the linear strand in the minimal free resolution of
  the ideal $I_Z$ has at least $\codim(C) = \codim(Z) - 1$ nonzero terms.
  Because the $d$ general hyperplanes form a regular sequence, Lemma~2.19 in
  \cite{AN} implies that the minimal free resolution of $I_X$ also has at
  least $\codim(Z) - 1 = \dim(X) - 1$ nonzero terms.  Applying Green's
  $K_{p,1}$\nobreakdash-Theorem (Theorem~3.c.1 in \cite{Gre}) shows that
  either $\deg(X) \leqslant 2 + \codim(X)$ or the variety $X$ lies in a
  variety of minimal degree having dimension equal to $1+\dim(X)$.
  Theorem~\ref{t:maxqp} precludes the possibility that
  $\deg(X) = 1 + \codim(X)$, so the classification of varieties with quadratic
  deficiency $1$ appearing in Remark~\ref{r:Zak} proves that
  $\varepsilon(X) = 1$.
\end{proof}

\begin{remark}
  Having almost maximal quadratic persistence dictates how many quadrics are
  kept under succesive projections.  When $\qp(X) = \codim(X) - 1$, the
  sequence $\kept \coloneqq \kept(X)$ is given by
  \[
    \Delta \kept_{\!j} = \kept_{\!j} - \kept_{\!j-1} =
    \begin{cases}
      \codim(X) - j + 1 & \text{if $j \leqslant \codim(X) - \varepsilon(X)$,}
      \\[-2pt]
      \codim(X) - j & \text{if $j > \codim(X) - \varepsilon(X)$.}
    \end{cases}
  \]
\end{remark}

\begin{remark}
  Our results and examples of subvarieties with large quadratic persistence
  suggest a dichotomy.  Either the ideal intrinsically has many quadratic
  polynomials or the variety has small codimension in another variety with
  large quadratic persistence.  Perhaps the strongest statement of this form,
  consistent with our work, is the following: for any $d \in \mathbb{N}$, the
  quadratic persistence of a non-degenerate irreducible complex subvariety
  $X \subseteq \mathbb{P}^n$ is at least $\codim(X) - d$ if and only if
  $\varepsilon(X) < \binom{d+2}{2}$ or $X$ is a hypersurface in a variety $Y$
  with quadratic persistence $\codim(Y) - d + 1$.  It would be interesting to
  determine whether a statement of this form is true.
\end{remark}

\subsection*{Lower bounds on Pythagoras numbers}

To link the quadratic persistence of a subvariety to its Pythagoras number, we
focus on an irreducible totally-real subvariety $X \subseteq \mathbb{P}^n$.
When working over the real numbers, we typically focus on the real points in a
variety. The next proposition shows that, for irreducible totally-real
varieties, the quadratic persistence is insensitive to the distinction between
real and complex points.

\begin{lemma}
  \label{l:qpReal}
  For an irreducible totally-real subvariety $X \subseteq \mathbb{P}^n$, the
  quadratic persistence of $X$ is equal to the smallest cardinality of a
  finite set $\Gamma$ of real points in $X$ such that the homogeneous ideal
  $I_{\pi_\Gamma(X)}$ contains no quadratic polynomials.
\end{lemma}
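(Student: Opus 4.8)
The plan is to show that projecting away from real points is no worse than projecting away from arbitrary closed points. Let $k \coloneqq \qp(X)$ and let $\Gamma = \{p_1, p_2, \dotsc, p_k\}$ be a set of closed points in $X$ realizing the quadratic persistence, so that $I_{\pi_\Gamma(X)}$ contains no quadratic polynomials. Since the reverse inequality is automatic (any set of real points is in particular a set of closed points, so the smallest cardinality of a real witnessing set is at least $\qp(X)$), it suffices to produce a set of $k$ \emph{real} points in $X$ with the same property. The strategy is to replace $\Gamma$ by a general set of real points, invoking the upper semi-continuity results in Lemma~\ref{l:qpGen}.

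First I would reduce to the statement that $\qp(X)$ is computed by a general choice of closed points: by Part~\emph{iv} of Lemma~\ref{l:qpGen}, the locus in $X^k$ on which $(q_1,\dotsc,q_k) \mapsto \kept_{\{q_1,\dotsc,q_k\}}(X)$ attains its minimum is a nonempty Zariski-open subset $U \subseteq X^k$; since $\Gamma$ lies in $U$ and achieves $\kept = 0$, the minimum on $U$ is $0$. Thus it remains only to find a real point of $U$, i.e., a point in $U \cap X(\mathbb{R})^k$. This is where total reality enters: because $X$ is irreducible and totally real, $X(\mathbb{R})$ is Zariski dense in $X(\mathbb{C})$, hence $X(\mathbb{R})^k$ is Zariski dense in $X^k$, so it must meet the nonempty open set $U$. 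Any such real $k$-tuple $(p_1',\dotsc,p_k')$ then satisfies $\kept_{\{p_1',\dotsc,p_k'\}}(X) = 0$, which by Part~\emph{i} of Lemma~\ref{l:qpGen} means $I_{\pi_{\Gamma'}(X)}$ has no quadrics, where $\Gamma' = \{p_1',\dotsc,p_k'\}$. This $\Gamma'$ is the desired real witnessing set of cardinality $k = \qp(X)$.

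One subtlety to address is that the points $p_1', \dotsc, p_k'$ chosen above may not be distinct, so $\abs{\Gamma'}$ could be smaller than $k$; but this only makes the cardinality smaller while still witnessing the no-quadrics condition after projection, which would contradict the minimality of $\qp(X)$—hence in fact the general real tuple \emph{does} consist of $k$ distinct points (alternatively, one notes the locus of $k$-tuples with a repeated entry is a proper closed subset of $X^k$, and intersecting with $U$ and using density of $X(\mathbb{R})^k$ again gives a distinct real tuple). A second point worth a line: the projection $\pi_{\Gamma'}$ is defined over $\mathbb{R}$ when $\Gamma'$ consists of real points, so $I_{\pi_{\Gamma'}(X)}$ is a real ideal and "contains no quadratic polynomials" means the same over $\mathbb{R}$ as over $\mathbb{C}$ (the real and complex degree-two graded pieces have the same dimension). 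The main obstacle, such as it is, is simply making the density argument precise—ensuring that "general closed point works" upgrades to "general real point works"—and this is exactly what the hypothesis that $X$ is totally real (equivalently, has a nonsingular real point on every component, so $X(\mathbb{R})$ is Zariski dense) is designed to supply.
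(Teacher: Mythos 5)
Your proposal is correct and follows essentially the same route as the paper: both arguments combine Part~\emph{iv} of Lemma~\ref{l:qpGen} (the minimizing locus in $X^k$ is Zariski open) with the Zariski density of $X(\mathbb{R})$ supplied by the totally-real hypothesis to replace a general witnessing set by a real one. Your version is simply more explicit about the distinctness of the chosen points and the field-of-definition of the projected ideal, both of which the paper leaves implicit.
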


\begin{proof}
  The complexification of the real variety $X$ is the complex variety
  $X_{\mathbb{C}} \coloneqq X \times_{\operatorname{Spec}(\mathbb{R})}
  \operatorname{Spec}(\mathbb{C})$ and Part~\emph{iv} in Lemma~\ref{l:qpGen}
  establishes that $\qp(X_{\mathbb{C}})$ is the cardinality of the smallest
  general set $\Gamma$ of closed points in $X_{\mathbb{C}}$ such that the
  ideal $I_{\pi_\Gamma(X_{\mathbb{C}})}$ contains no quadratic polynomials.
  Since the variety $X$ is totally real if and only if the set $X(\mathbb{R})$
  of real points is Zariski dense, we may assume that the set $\Gamma$ of
  closed points that determines the quadratic persistence contains only real
  points.
\end{proof}

The strategy for creating lower bounds on the Pythagoras number involves
restricting to faces in the cone $\Sos_X$.  The crucial observation, for which
variants already appear in Proposition~1.1 in \cite{BIK}, Theorem~1.6 in
\cite{BIJV}, and Proposition~3.3 in \cite{Sch17}, is the following lemma.  For
any subset $\Gamma$ of closed points in $X$, the projection
$\pi_\Gamma \colon \mathbb{P}^n \dashrightarrow \mathbb{P}^{n-k}$ induces a
monomorphism
$\pi_\Gamma^{\sharp} \colon \mathbb{R}[y_0,y_1, \dotsc,
y_{n-k}]/I_{\pi_\Gamma(X)} \to \mathbb{R}[x_0, x_1, \dotsc, x_n]/I_X$ between
the homogeneous coordinate rings.

\begin{lemma}
  \label{l:face}
  Let $X \subseteq \mathbb{P}^n$ be a real subvariety.  For any subset
  $\Gamma$ of real points in $X$, the monomorphism $\pi_\Gamma^{\sharp}$
  identifies the sums-of-squares cone $\Sos_{\pi_\Gamma(X)}$ with the face in
  $\Sos_X$ consisting of all quadratic polynomials vanishing at the points in
  $\Gamma$.
\end{lemma}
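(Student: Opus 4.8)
The plan is to make the monomorphism $\pi_\Gamma^\sharp$ completely explicit in adapted coordinates and then read the statement off from the elementary fact that a sum of \emph{real} squares vanishes at a real point exactly when every summand does. First I would choose homogeneous coordinates $x_0, x_1, \dotsc, x_n$ fitted to the center of projection, so that $\Span(\Gamma)$ is the coordinate subspace $\operatorname{V}(x_k, x_{k+1}, \dotsc, x_n)$ (if $\Gamma$ fails to span a $(k-1)$-plane one simply replaces $k$ by $1 + \dim \Span(\Gamma)$; working with $\Span(\Gamma)$ throughout sidesteps this). Then $\pi_\Gamma$ becomes the coordinate projection $[x_0 : \dotsb : x_n] \mapsto [x_k : \dotsb : x_n]$, and by the elimination description of the image of a projection (Theorem~8.5.8 in \cite{CLO}, as already invoked in the proof of Lemma~\ref{l:qpGen}) we have $I_{\pi_\Gamma(X)} = I_X \cap \mathbb{R}[x_k, x_{k+1}, \dotsc, x_n]$. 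Hence $\pi_\Gamma^\sharp$ identifies the homogeneous coordinate ring of $\pi_\Gamma(X)$ with the graded subring $R' \subseteq R$ generated by the residues of $x_k, x_{k+1}, \dotsc, x_n$, and $R_1' \subseteq R_1$ is the image of $\Span_{\mathbb{R}}(x_k, x_{k+1}, \dotsc, x_n)$.

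The one genuinely geometric point is to identify $R_1'$ with the space of linear forms in $R_1$ that vanish at the points of $\Gamma$. One containment is clear: every form in $\Span_{\mathbb{R}}(x_k, \dotsc, x_n)$ vanishes on $\operatorname{V}(x_k,\dotsc,x_n) = \Span(\Gamma) \supseteq \Gamma$. Conversely, if $\ell \in R_1$ vanishes at every point of $\Gamma$, then any representative $\tilde{\ell} \in S_1$ cuts out a linear subspace of $\mathbb{P}^n$ containing $\Gamma$, hence containing its span $\Span(\Gamma) = \operatorname{V}(x_k,\dotsc,x_n)$; being linear, $\tilde{\ell}$ therefore lies in $\Span_{\mathbb{R}}(x_k, \dotsc, x_n)$, so $\ell \in R_1'$.

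Next I would record that the set $F \coloneqq \{ f \in \Sos_X \mid \text{$f$ vanishes at the points of $\Gamma$} \}$ is a face of $\Sos_X$. Fixing an affine representative $\widetilde{p}$ of each $p \in \Gamma$ defines a linear functional $\operatorname{ev}_p \colon R_2 \to \mathbb{R}$ by $f \mapsto \widetilde{f}(\widetilde{p})$; this is well defined since $(I_X)_2$ vanishes at $p \in X$, and it depends on $\widetilde{p}$ only up to a positive scalar. Because $\operatorname{ev}_p(g_1^2 + \dotsb + g_r^2) = \sum_i g_i(\widetilde{p})^2 \geqslant 0$, the functional $\operatorname{ev}_p$ is nonnegative on $\Sos_X$. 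If $f + f' \in F$ with $f, f' \in \Sos_X$, then for each $p \in \Gamma$ we have $\operatorname{ev}_p(f) + \operatorname{ev}_p(f') = 0$ with both terms nonnegative, so $\operatorname{ev}_p(f) = \operatorname{ev}_p(f') = 0$; thus $f, f' \in F$ and $F$ is a face.

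Finally I would establish the two inclusions $\pi_\Gamma^\sharp(\Sos_{\pi_\Gamma(X)}) \subseteq F$ and $F \subseteq \pi_\Gamma^\sharp(\Sos_{\pi_\Gamma(X)})$; together with the injectivity of $\pi_\Gamma^\sharp$ (already noted in the text), this yields the asserted identification. For the first inclusion, if $f = h_1^2 + \dotsb + h_r^2$ with $h_i$ linear forms on $\pi_\Gamma(X)$, then $\pi_\Gamma^\sharp(f) = \sum_i \pi_\Gamma^\sharp(h_i)^2$ is a sum of squares in $R$ whose summands lie in $R_1'$, hence vanish at $\Gamma$, so $\pi_\Gamma^\sharp(f) \in F$. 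For the second, write $f = g_1^2 + \dotsb + g_r^2 \in F$ with $g_i \in R_1$; for each $p \in \Gamma$ the vanishing $0 = \operatorname{ev}_p(f) = \sum_i g_i(\widetilde{p})^2$ of a sum of real squares forces $g_i(\widetilde{p}) = 0$ for every $i$, so each $g_i$ vanishes at every point of $\Gamma$ and therefore lies in $R_1'$ by the second paragraph; writing $g_i = \pi_\Gamma^\sharp(h_i)$ exhibits $f = \pi_\Gamma^\sharp(h_1^2 + \dotsb + h_r^2) \in \pi_\Gamma^\sharp(\Sos_{\pi_\Gamma(X)})$. The only step carrying real content is the identification of $R_1'$ in the second paragraph; the rest is positivity of sums of real squares together with the fact that $\pi_\Gamma^\sharp$ is a graded ring homomorphism, so I anticipate no serious obstacle.
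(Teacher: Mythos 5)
Your proposal is correct and follows essentially the same route as the paper: both arguments hinge on the observation that a sum of real squares vanishing at a real point of $\Gamma$ forces each linear summand $g_i$ to vanish there, hence to lie in the image of $\pi_\Gamma^{\sharp}$, giving the two inclusions $\pi_\Gamma^\sharp(\Sos_{\pi_\Gamma(X)}) \subseteq F$ and $F \subseteq \pi_\Gamma^\sharp(\Sos_{\pi_\Gamma(X)})$. You simply make explicit two points the paper leaves implicit, namely the coordinate/elimination identification of the degree-one part of the image of $\pi_\Gamma^{\sharp}$ with the linear forms vanishing on $\Gamma$, and the verification that $F$ is indeed a face.
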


\begin{proof}
  Let $F \subset \Sos_X$ be the face of the sums-of-squares cone in
  $R \coloneqq \mathbb{R}[x_0, x_1, \dotsc, x_n]/I_X$ consisting of all
  quadratic polynomials vanishing at the points in $\Gamma$.  As
  $\pi_\Gamma^\sharp$ is homomorphism of $\mathbb{N}$\nobreakdash-graded
  rings, we see that $\pi_\Gamma^\sharp(\Sos_{\pi_\Gamma(X)}) \subseteq F$.
  Consider $f = g_1^2 + g_2^2 + \dotsb + g_r^2 \in F$ and fix $p \in \Gamma$.
  Since $f$ vanishes at the real point $p$, we see that, for all
  $1 \leqslant i \leqslant r$, the element $g_i$ also vanishes at $p$.  Hence,
  the elements $f, g_1, g_2, \dotsc, g_r$ all lie in the image of the map
  $\pi_\Gamma^\sharp$, so we have
  $F \subseteq \pi_\Gamma^\sharp(\Sos_{\pi_\Gamma(X)})$.
\end{proof}

Theorem~\ref{t:3} rephrases this observation in terms of the quadratic
persistence and provided our original motivation for Definition~\ref{d:qp}.

\begin{proof}[Proof of Theorem~\ref{t:3}]
  Set $k \coloneqq \qp(X)$.  Lemma~\ref{l:qpReal} ensures that exists a set
  $\Gamma \coloneqq \{p_1, p_2, \dotsc, p_k \}$ of real points in $X$ such
  that the ideal $I_{\pi_{\Gamma}(X)}$ contains no quadratic polynomials.  The
  non-degeneracy of $X$ implies the non-degeneracy of $\pi_\Gamma(X)$, so the
  cone $\Sos_{\pi_\Gamma(X)}$ is equal to the sums-of-squares cone in
  $\mathbb{P}^{n-k}$.  Since $\py(\mathbb{P}^{n-k}) = n-k+1$,
  Lemma~\ref{l:face} establishes that $\py(X) \geqslant n+1-k$.  Lastly,
  Part~\emph{i} of Lemma~3.2 proves that $k \leqslant \codim(X)$, so
  $\py(X) \geqslant 1 + n - \codim(X) = 1 + \dim(X)$.
\end{proof}

As an immediate consequence, we can strengthen Theorem~1.1 in \cite{BPSV}.

\begin{corollary}
  \label{c:mindeg}
  For any non-degenerate irreducible totally-real subvariety
  $X \subseteq \mathbb{P}^n$, we see that $X$ is a variety of minimal degree
  if and only if $\py(X) = 1 + \dim(X)$.
\end{corollary}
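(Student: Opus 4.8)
The plan is to prove the two implications separately: one direction is a squeezing argument built directly on Theorem~\ref{t:3} and Theorem~\ref{t:maxqp}, while the other uses the classification of varieties of minimal degree together with the $2$\nobreakdash-regular upper bound.

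First I would dispatch the reverse implication. Assume $\py(X) = 1 + \dim(X)$. Theorem~\ref{t:3} supplies the chain $1 + \dim(X) = \py(X) \geqslant n + 1 - \qp(X) \geqslant 1 + \dim(X)$, so both inequalities are equalities. In particular $n + 1 - \qp(X) = 1 + \dim(X)$, which rearranges to $\qp(X) = n - \dim(X) = \codim(X)$, and then Theorem~\ref{t:maxqp} forces $X$ to be a variety of minimal degree.

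For the forward implication, suppose $X$ has minimal degree. Since $X$ is non-degenerate and irreducible, Theorem~0.4 in \cite{EGHP6} (as recalled just after Corollary~\ref{c:scroll}) shows that $X$ is $2$\nobreakdash-regular, a property unaffected by extension of scalars between $\mathbb{R}$ and $\mathbb{C}$. As $X$ is totally real, Corollary~32 in \cite{BSV17} then gives $\py(X) = 1 + \dim(X)$ outright. Alternatively, one may combine the lower bound $\py(X) \geqslant 1 + \dim(X)$ from Theorem~\ref{t:3} with the upper bound $\py(X) \leqslant n + 1 - \codim(X) = 1 + \dim(X)$ coming from Corollary~\ref{c:gl} once $\green(X) = \infty$, or equivalently from Theorem~\ref{t:2reg} applied with $X' = X$.

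I do not anticipate a genuine obstacle here: all the substantive content is already packaged in Theorem~\ref{t:3} and Theorem~\ref{t:maxqp}, and the only point requiring care is recording that an irreducible variety of minimal degree is $2$\nobreakdash-regular in the Castelnuovo--Mumford sense, so that the Pythagoras number of such a totally-real variety equals $1 + \dim(X)$.
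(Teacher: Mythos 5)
Your proposal is correct and follows essentially the same route as the paper: the reverse implication is the identical squeeze combining Theorem~\ref{t:3} with the bound $\qp(X) \leqslant \codim(X)$ to force $\qp(X) = \codim(X)$ and then invoking Theorem~\ref{t:maxqp}, while your ``alternative'' forward argument (Theorem~\ref{t:3} for the lower bound plus Corollary~\ref{c:gl} with $\green(X) = \infty$ for the upper bound) is exactly what the paper does. Your primary forward route via Corollary~32 of \cite{BSV17} is just a repackaging of the same content, which the paper itself quotes in the lead-up to Theorem~\ref{t:2reg}.
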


\begin{proof}
  Suppose that $X$ is a variety of minimal degree.  Theorem~\ref{t:maxqp}
  shows that $\qp(X) = \codim(X)$. Combining Corollary~\ref{c:gl} and
  Theorem~\ref{t:3} gives
  $1 + \dim(X) \geqslant \py(X) \geqslant 1 + \dim(X)$.  Conversely, suppose
  that $\py(X) = 1 + \dim(X)$. Combining Part~\emph{i} of
  Lemma~\ref{l:qpBasic} and Theorem~\ref{t:3} gives
  $\codim(X) \geqslant \qp(X) \geqslant \codim(X)$ and Theorem~\ref{t:maxqp}
  shows that $X$ is a variety of minimal degree.
\end{proof}

\begin{proof}[Proof of Theorem~\ref{t:4}]
  Theorem~\ref{t:maxqp} proves that conditions \emph{a} and \emph{c} are
  equivalent, and Corollary~\ref{c:mindeg} proves between conditions
  \emph{a} and \emph{b} are equivalent.
\end{proof}

We end this section by describing the arithmetically Cohen--Macaulay varieties
with almost minimal Pythagoras numbers.

\begin{corollary}
  \label{c:acmPy}
  Let $X \subseteq \mathbb{P}^n$ be a non-degenerate irreducible totally-real
  subvariety.  Assuming that $\py(X) = 2 + \dim(X)$, we have
  $\qp(X) = \codim(X) - 1$.  If $X$ is arithmetically Cohen--Macaulay and
  $\qp(X) = \codim(X) - 1$, then we also have $\py(X) = 2 + \dim(X)$.
\end{corollary}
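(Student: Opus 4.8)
The plan is to establish the two implications of the corollary separately. The forward implication—that $\py(X) = 2 + \dim(X)$ forces $\qp(X) = \codim(X) - 1$—follows quickly from the bounds already in hand, while the converse naturally decomposes into an easy lower bound on $\py(X)$ coming from Theorem~\ref{t:3} and a substantive upper bound that will consume most of the work.

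For the forward implication, suppose $\py(X) = 2 + \dim(X)$. The first inequality in Theorem~\ref{t:3} gives $2 + \dim(X) = \py(X) \geqslant n + 1 - \qp(X)$, hence $\qp(X) \geqslant n - 1 - \dim(X) = \codim(X) - 1$. Part~\emph{i} of Lemma~\ref{l:qpBasic} supplies the reverse estimate $\qp(X) \leqslant \codim(X)$, so it suffices to exclude the possibility $\qp(X) = \codim(X)$. But Theorem~\ref{t:maxqp} together with Corollary~\ref{c:mindeg} (equivalently, Theorem~\ref{t:4}) shows that $\qp(X) = \codim(X)$ would force $\py(X) = 1 + \dim(X)$, contradicting the hypothesis. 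Therefore $\qp(X) = \codim(X) - 1$.

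For the converse, assume that $X$ is arithmetically Cohen--Macaulay and $\qp(X) = \codim(X) - 1$. The lower bound is immediate: Theorem~\ref{t:3} gives $\py(X) \geqslant n + 1 - \qp(X) = n + 2 - \codim(X) = 2 + \dim(X)$. For the upper bound $\py(X) \leqslant 2 + \dim(X)$, I would feed the hypotheses into Theorem~\ref{t:acm}, which yields the dichotomy: either $\varepsilon(X) = 1$, or $X$ is a codimension-one subvariety of a variety of minimal degree. In the first case, the classification recalled in Remark~\ref{r:Zak} shows that $X$ is either a hypersurface of degree at least three—whereupon $\py(X) \leqslant n + 1 = 2 + \dim(X)$ holds trivially, since every quadratic form in $n + 1$ variables has rank at most $n + 1$—or a linearly normal variety with $\deg(X) = 2 + \codim(X)$, in which case Corollary~\ref{c:acmUp} gives $\py(X) \leqslant 2 + \dim(X)$ outright. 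In the second case, $X$ lies in a variety $Y$ of minimal degree with $\dim(Y) = 1 + \dim(X)$; since varieties of minimal degree are $2$\nobreakdash-regular, the variety $Y$ (which can be taken real; see the next paragraph) is a real $2$\nobreakdash-regular variety containing $X$, and Theorem~\ref{t:2reg} bounds $\py(X) \leqslant 1 + \dim(Y) = 2 + \dim(X)$ (alternatively, one applies Lemma~\ref{l:inclPy} to the inclusion $X \subseteq Y$). Combining with the lower bound yields $\py(X) = 2 + \dim(X)$.

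The one delicate point—and the main obstacle—is the real structure in this final case: Theorem~\ref{t:2reg} requires the containing variety of minimal degree to be defined over $\mathbb{R}$, whereas Theorem~\ref{t:acm} is stated over $\mathbb{C}$ and, applied to the complexification $X_{\mathbb{C}}$, a priori produces only a complex variety $Y'$ of minimal degree with $X_{\mathbb{C}} \subseteq Y' \subseteq \mathbb{P}^n$ and $\dim(Y') = 1 + \dim(X)$. I would resolve this by observing that $Y'$ is canonically attached to the linear strand of the minimal free resolution of $I_X$—concretely, it is the common zero locus of a distinguished subspace of $(I_X)_2$ of dimension $\binom{\codim(Y') + 1}{2}$ produced by Green's $K_{p,1}$\nobreakdash-Theorem—so that, $I_X$ being a real ideal, the construction is equivariant for the action of $\operatorname{Gal}(\mathbb{C}/\mathbb{R})$; hence $Y'$ is Galois-stable and descends to a real variety $Y$ containing $X$. (Equivalently, one simply runs the argument of Theorem~\ref{t:acm} over $\mathbb{R}$ from the outset, invoking Lemma~\ref{l:qpReal} to pass between real and complex points.) Granting this, the remaining verifications—that $\qp$, $\codim$, and $\varepsilon$ are unchanged under complexification, and that the hypotheses of Corollary~\ref{c:acmUp} and Theorem~\ref{t:2reg} are met—are routine.
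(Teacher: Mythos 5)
Your proposal is correct and follows essentially the same route as the paper: the forward implication combines Theorem~\ref{t:3}, Part~\emph{i} of Lemma~\ref{l:qpBasic}, and the equivalence from Theorem~\ref{t:maxqp} and Corollary~\ref{c:mindeg}, while the converse uses Theorem~\ref{t:3} for the lower bound and the dichotomy from Theorem~\ref{t:acm} with Remark~\ref{r:Zak}, resolved by Corollary~\ref{c:acmUp} and Theorem~\ref{t:2reg} respectively. Your added attention to descending the containing variety of minimal degree to $\mathbb{R}$ before invoking Theorem~\ref{t:2reg} addresses a point the paper leaves implicit, and is a welcome refinement rather than a deviation.
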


\begin{proof}
  Assume $\py(X) = 2 + \dim(X)$. Theorem~\ref{t:3} yields
  $ 2 + \dim (X) = \py(X) \geqslant n + 1 - \qp(X)$, so we obtain the lower
  bound $\qp(X) \geqslant \codim(X) - 1$. Part~\emph{i} of
  Lemma~\ref{l:qpBasic} provides the upper bound $\codim(X) \geqslant \qp(X)$.
  Since Theorem~\ref{t:maxqp} and Corollary~\ref{c:mindeg} show that
  $\qp(X) = \codim(X)$ if and only if $\py(X) = 1 + \dim(X)$, we conclude that
  $\qp(X) = \codim(X) - 1$.

  Suppose that $X$ is arithmetically Cohen--Macaulay and
  $\qp(X) = \codim(X) - 1$.  Theorem~\ref{t:3} yields the lower bound
  $\py(X) \geqslant 2 + \dim(X)$.  To give the matching upper bound,
  Theorem~\ref{t:acm}, together with Remark~\ref{r:Zak}, divides the analysis
  into two cases: if $X$ is a subvariety having codimension $1$ in a variety
  of minimal degree, then Theorem~\ref{t:2reg} proves that
  $\py(X) \leqslant 2 + \dim(X)$, and if $\deg(X) = 2 + \codim(X)$, then
  Corollary~\ref{c:acmUp} proves that $\py(X) \leqslant 2 + \dim(X)$.
\end{proof}

\begin{proof}[Proof of Theorem~\ref{t:5}]
  Theorem~\ref{t:acm} proves that conditions \emph{a} and \emph{c} are
  equivalent, and Corollary~\ref{c:acmPy} proves between conditions \emph{a}
  and \emph{b} are equivalent.
\end{proof}

\section{Quadratic persistence and minimal free resolutions}
\label{s:res}

\noindent
This section connects the quadratic persistence of a complex subvariety
$X \subseteq \mathbb{P}^n$ with a homological invariant of its homogeneous
coordinate ring $R \coloneqq \mathbb{C}[x_0, x_1, \dotsc, x_n]/I_X$ viewed as
a module over the polynomial ring
$S \coloneqq \mathbb{C}[x_0, x_1, \dotsc, x_n]$.  To be more precise, set
\begin{align}
  \label{d:len}
  \len(X) \coloneqq \max \left\{ j \in \mathbb{N} \; \middle| \;
  \smash{\Tor_j^{\,S}(R, \mathbb{C})_{1+j}}  \neq 0 \right\} \, .
\end{align}
In other words, the Betti table for the $S$\nobreakdash-module $R$ has
$\len(X)$ nonzero entries in its first row or the linear strand in the minimal
free resolution of the ideal $I_X$ has $\len(X)$ nonzero terms.  In contrast,
Section~8D in \cite{Eis} emphasizes the invariant $b(X) \coloneqq \len(X) + 1$
when $X$ is a curve of high degree.

\begin{remark}
  \label{r:Betti}
  The numerical invariants of a minimal free resolution can be compactly
  displayed in an array.  Following Section~1B in \cite{Eis}, the
  \define{Betti table} of an $S$\nobreakdash-module $M$ is the array whose
  $(i,j)$\nobreakdash-entry is the number
  $\dim_\mathbb{C} \Tor_j^{\,S}(M, \mathbb{C})_{i+j}$.  For a complex
  subvariety $X \subseteq \mathbb{P}^n$, the first three rows in the Betti
  table of the $S$\nobreakdash-module $R = S/I_X$ have the form
  \[
    \begin{array}{c|ccccccccccc}
      _{i} \backslash^{j} & 0 & 1 & 2 & \dotsb & \green(X) & \green(X)+1
      & \dotsb & \len(X) & \len(X)+1 & \dotsb \\ \hline
      0 & 1 & 0 & 0 & \dotsb & 0 & 0 & \dotsb & 0 & 0 & \dotsb \\[-2pt]
      1 & 0 & * & * & \dotsb & * & * & \dotsb & * & 0 & \dotsb \\[-2pt]      
      2 & 0 & 0 & 0 & \dotsb & 0 & * & \dotsb & * & * & \dotsb \\[-5pt]
      \vdots & \vdots & \vdots & \vdots & & \vdots & \vdots & &
      \vdots & \vdots &
    \end{array}
  \]
  where ``*'' denotes a positive integer.  If $\green(X)$ and $\len(X)$ are
  finite, then we have $0 \leqslant \green(X) \leqslant \len(X) \leqslant n$.
\end{remark}

Theorem~\ref{t:2} asserts that, for any non-degenerate irreducible subvariety
$X \subseteq \mathbb{P}^n$, the quadratic persistence $\qp(X)$ is bounded
below by the homological invariant $\len(X)$.  The basic plan for proving
Theorem~\ref{t:2} involves relating the linear syzygies of the variety $X$
with those of a general inner projection.  Roughly speaking, we do this by
first evaluating the matrices of linear forms, which represent the linear part
of the minimal free resolution of the homogeneous ideal $I_X$, at general
closed points of $X$.  By analyzing the vectors lying in the kernel of a
product of these complex matrices, we obtain quadratic polynomials lying in
the homogeneous ideal of the general inner projection.  The fact that these
complex matrices anti-commute is vital to the analysis.  To convert this
outline into a rigorous argument requires a fair-sized piece of homological
machinery.

For convenience, we use the Bernstein--Gelfand--Gelfand correspondence to
describe the linear part of a minimal free resolution.  Following Section~7B
in \cite{Eis}, the exterior algebra $E \cong \bigwedge (S_1)^\ast$ is the
Koszul dual of the polynomial ring $S$.  If $e_0, e_1, \dotsc, e_n$ are the
generators of $E$ dual to the variables $x_0, x_1, \dotsc, x_n$ in $S$, then
we have $e_{\!j}^2 = 0$ for all $1 \leqslant j \leqslant n$ and
$e_{\!j} \, e_{k} = - e_{k} \, e_{\!j}$ for all $1 \leqslant j < k \leqslant n$.
We equip $E$ with the $\mathbb{Z}$\nobreakdash-grading induced by setting
$\deg e_{\!j} = -1$ for all $1 \leqslant j \leqslant n$.  Although we work with
left $E$\nobreakdash-modules, any $\mathbb{Z}$\nobreakdash-graded left
$E$\nobreakdash-module $U$ can also be viewed as a
$\mathbb{Z}$\nobreakdash-graded right $E$\nobreakdash-module.  Specifically,
if $e \in E_{-j}$ and $u \in U_k$, then we have
$e \, u = (-1)^{jk} \, u \, e$.  For a finitely-generated left
$E$\nobreakdash-module $U = \bigoplus_{i \in \mathbb{Z}} U_i$, the
$\mathbb{C}$\nobreakdash-vector space dual
$U^{\ast} \coloneqq \bigoplus_{i \in \mathbb{Z}} (U_i)^{\ast}$, where
$(U_i)^{\ast} \coloneqq \Hom_\mathbb{C}(U_i, \mathbb{C})$, is naturally a
right $E$\nobreakdash-module: for all $\phi \in (U_i)^{\ast}$, all
$e \in E_{-j}$, and all $u \in U_{i+j}$, we have
$(\phi \, e)(u) = \phi (e \, u)$.  However, as a
$\mathbb{Z}$\nobreakdash-graded left $E$\nobreakdash-module where the summand
$(U^{\ast})_{-i} = (U_i)^{\ast}$ has degree $-i$, we have
$(e \, \phi)(u) = (-1)^{i \, j} \, (\phi \, e)(u) = (-1)^{i \, j} \, \phi(e \,
u)$.

The Bernstein--Gelfand--Gelfand correspondence supplies an equivalence of
categories between linear complexes of free $S$\nobreakdash-modules and
$\mathbb{Z}$\nobreakdash-graded $E$\nobreakdash-modules.  Given a
$\mathbb{Z}$\nobreakdash-graded $E$\nobreakdash-module $U$, we make the tensor
product $S \otimes_\mathbb{C} U$ into the complex of
$\mathbb{Z}$\nobreakdash-graded free $S$\nobreakdash-modules
\[
  \mathbf{L}(U) \coloneqq \quad \dotsb \longleftarrow S \otimes_\mathbb{C}
  U_{i-1} \xleftarrow{\;\; \partial_i \;\;} S \otimes_{\mathbb{C}} U_i
  \longleftarrow \dotsb \quad ,
\]
where
$\partial_i( 1 \otimes u) \coloneqq \sum_{j=0}^n x_{\!j} \otimes e_{\!j} \, u$ and
the term $S \otimes_{\mathbb{C}} U_i \cong S(-i)^{\dim_\mathbb{C} U_i}$ sits
in homological degree $i$ and is generated in degree $i$; see Section~7B in
\cite{Eis}.  By choosing bases $\{ u^{(i)}_r \}$ and $\{ u^{(i-1)}_s \}$ for
the $\mathbb{C}$\nobreakdash-vector spaces $U_{i}$ and $U_{i-1}$ so that
$e_{\!j}^{} \, u_r^{(i)} = \sum_s^{} c_{j,r,s}^{} \, u^{(i-1)}_s$ for all
$0 \leqslant j \leqslant n$ and some $c_{j,r,s} \in \mathbb{C}$, the map
$\partial_i$ is represented by a matrix of linear forms whose
$(r,s)$\nobreakdash-entry is $\sum_{j=0}^n c_{j,r,s} \, x_{\!j}$.
Proposition~7.5 in \cite{Eis} proves that $\mathbf{L}$ defines a covariant
functor and induces an equivalence from the category of
$\mathbb{Z}$\nobreakdash-graded $E$\nobreakdash-modules to the category of
linear complexes of free $S$\nobreakdash-modules.  Given a
$\mathbb{Z}$\nobreakdash-graded $E$\nobreakdash-module $U$, we identify an
element $v \in E_{-1} = (S_1)^{\ast}$ with the linear map
$v \colon S_1 \otimes_{\mathbb{C}} U \to U$ defined by
$v(x \otimes u) = v(x) \, u$.  Furthermore, for all $i \in \mathbb{Z}$, scalar
multiplication $E_{-1} \otimes_{\mathbb{C}} U_{i} \to U_{i-1}$ is defined by
$v \otimes u \mapsto v \bigl( \partial_i(u) \bigr) = \sum_{j=0}^n v(x_{\!j}) \,
e_{\!j} \, u$.

Building on this equivalence, Corollary~7.11 in \cite{Eis} identifies the left
$E$\nobreakdash-module corresponding to the linear part in the minimal free
resolution of an $S$\nobreakdash-module.  Focusing on a non-degenerate
irreducible complex subvariety $X \subseteq \mathbb{P}^n$ defined by the
saturated homogeneous $S$\nobreakdash-ideal $I_X$, the strand in the minimal
free resolution of the $S$\nobreakdash-module $R \coloneqq S/I_X$
corresponding to the first row of the Betti table is $\mathbf{L}(U_X^{\ast})$
where $U_X$ is the $E$\nobreakdash-module with free presentation
\[
  0 \longleftarrow U_X \longleftarrow E(1) \otimes_\mathbb{C} \bigl( (I_X)_{2}
  \bigr)^{\ast} \xleftarrow{\;\; \alpha \;\;} E(2) \otimes_\mathbb{C}  \bigl(
  (I_X)_{3}^{} \bigr)^{\ast}
\]
and the map $\alpha$ is defined on the generators
$1 \otimes \bigl((I_X)_{3} \bigr)^{\ast} = \bigl( (I_X)_{3} \bigr)^{\ast}$ as
the dual of the multiplication map
$S_1 \otimes_\mathbb{C} (I_X)_2 \to (I_X)_3$.  It follows that there is a
canonical isomorphism $(U_X^{\ast})_{1}^{} \cong (I_X^{})_2^{}$ of
$\mathbb{C}$\nobreakdash-vector spaces and
$\dim_\mathbb{C} (U_X)_{-j} = \dim_\mathbb{C} \Tor^{\,S}_j(R,
\mathbb{C})_{1+j}^{}$ for all $j \in \mathbb{Z}$.

To help internalize this construction, we illustrate it for an accessible
projective subvariety.

\begin{example}
  \label{e:U}
  For the rational normal curve
  $C \coloneqq \nu_3(\mathbb{P}^1) \subset \mathbb{P}^3$, the saturated
  homogeneous ideal $I_C$ is minimally generated by
  $f_0 \coloneqq x_{2}^{2} - x_{1}^{} x_{3}^{}$,
  $f_1 \coloneqq x_{1}^{} x_{2}^{} - x_{0}^{} x_{3}^{}$, and
  $f_2 \coloneqq x_{1}^{2} - x_{0}^{} x_{2}^{}$ in
  $S \coloneqq \mathbb{C}[x_0, x_1, x_2, x_3]$.  Because the syzygies among
  these three quadratic binomials are freely generated by the two relations
  $x_{0} f_0 - x_{1} f_1 + x_{2} f_2 = 0$ and
  $x_{1} f_0 - x_{2} f_1 + x_{3} f_2 = 0$, the Betti table of the
  $S$\nobreakdash-module $S/I_C$ is
  \[
    \begin{array}[b]{c|ccccccccccc}
      _{i} \backslash^{j} & 0 & 1 & 2 \\ \hline
      0 & 1 & 0 & 0 \\[-2pt]
      1 & 0 & 3 & 2 \\
    \end{array} \; . 
  \]
  Choosing the ten cubic binomials
  \begin{align*}
    g_0 &\coloneqq x_{2}^{2} x_{3}^{} - x_{1}^{} x_{3}^{2} 
    & g_1 &\coloneqq x_{1}^{} x_{2}^{} x_{3}^{} - x_{0}^{} x_{3}^{2} 
    & g_2 &\coloneqq x_{1}^{2} x_{3}^{} - x_{0}^{} x_{2}^{} x_{3}^{}
    & g_3 &\coloneqq x_{2}^{3} - x_{0}^{} x_{3}^{2} \\
    g_4 &\coloneqq x_{1}^{} x_{2}^{2} - x_{0}^{} x_{2}^{} x_{3}^{} 
    & g_5 &\coloneqq x_{0}^{} x_{2}^{2} - x_{0}^{} x_{1}^{} x_{3}^{} 
    & g_6 &\coloneqq x_{1}^{2} x_{2}^{} - x_{0}^{} x_{1}^{} x_{3}^{} 
    & g_7 &\coloneqq x_{0}^{} x_{1}^{} x_{2}^{} - x_{0}^{2} x_{3}^{} \\
    g_8 &\coloneqq x_{1}^{3} - x_{0}^{2} x_{3}^{} 
    & g_9 &\coloneqq x_{0}^{} x_{1}^{2} - x_{0}^{2} x_{2}^{} \, ,
  \end{align*}
  as a basis for $(I_C)_3$, it follows that the left $E$\nobreakdash-module
  homomorphism
  $\alpha \colon \bigoplus_{i=0}^{9} E(2) \to \bigoplus_{i=0}^{2} E(1)$
  corresponds to the matrix
  \[
    \begin{bmatrix*}
      e_{3} & -e_{2} & -e_{1} & e_{2} & e_{1} & \phantom{-}e_{0} & 0 &
      \phantom{-}0_{\phantom{.}} & 0 & 0 \\[-1pt]
      0 & \phantom{-}e_{3} & \phantom{-}0_{\phantom{.}} & 0 & e_{2} &
      \phantom{-}0_{\phantom{.}} & e_{1} & \phantom{-}e_{0} & 0  & 0 \\[-1pt]
      0 & \phantom{-}0_{\phantom{.}} & \phantom{-}e_{3} & 0 &  0 & -e_{2} &
      e_{2} & -e_{1} & e_{1} & e_{0} \\
    \end{bmatrix*} \, .
  \]
  The entries in the first row of this matrix come from the four equations
  $x_{3} f_0 = g_0$, $x_{2} f_0 = - g_1 + g_3$, $x_{1} f_0 = - g_2 + g_4$, and
  $x_{0} f_0 = g_5$.  The first row of the Betti table corresponds to the left
  $E$\nobreakdash-module $U_C \coloneqq \operatorname{Coker}(\alpha)$.  From
  the given free presentation, we may verify directly that
  $\dim_{\mathbb{C}} (U_C)_{-1} = 3$, $\dim_{\mathbb{C}} (U_C)_{-2} = 2$, and
  $\dim_{\mathbb{C}} (U_C)_{-j} = 0$ for all other $j$.  In particular, the
  three standard basis vectors for the free $E$\nobreakdash-module
  $\bigoplus_{i=0}^{2} E(1)$ surject onto a $\mathbb{C}$\nobreakdash-vector
  space basis for $(U_C)_{-1}$ and the two vectors
  $\begin{bsmallmatrix*} e_0 & 0 & 0 \end{bsmallmatrix*}^{\transpose},
  \begin{bsmallmatrix*} e_1 & 0 & 0 \end{bsmallmatrix*}^{\transpose} \in
  \bigoplus_{i=0}^{2} E(1)$ surject onto a $\mathbb{C}$\nobreakdash-vector
  space basis for $(U_C)_{-2}$.
\end{example}

The choice of a closed point $p \in X$ spawns two related linear free
complexes.  The first operation extracts the linear part of the minimal free
resolution of the homogeneous coordinate ring regarded as a module over a
smaller polynomial ring. To understand this, choose an affine representative
$\widetilde{p} \in \mathbb{A}^{n+1}$ for $p \in \mathbb{P}^n$ and let $W'$ be
the kernel of the $\mathbb{C}$\nobreakdash-linear map $S_1 \to \mathbb{C}$
defined by the evaluation $f \mapsto f(\widetilde{p})$.  Setting
$S' \coloneqq \operatorname{Sym}(W')$, the rational map
$\pi_{\{ p\}} \colon \mathbb{P}^n \dashrightarrow \mathbb{P}^{n-1} \coloneqq
\operatorname{Proj}(S')$ corresponds to the inclusion $W' \hookrightarrow S_1$
of linear subspaces.  Since $S_1 = (E_{-1})^{\ast}$, the annihilator of $W'$
is generated by
$v \coloneqq \tilde{p}_0 \, e_0 + \tilde{p}_1 \, e_1 + \dotsb + \tilde{p}_n \,
e_n$ and the exterior algebra
$E' \coloneqq E / \ideal{v} \cong \bigwedge \Hom_\mathbb{C}(W', \mathbb{C})$
is Koszul dual to the polynomial ring $S'$.  By Corollary~7.12 in \cite{Eis},
the linear part of the minimal free resolution of the $S'$\nobreakdash-module
$I_{X}(1)$ is $\mathbf{L}\bigl( (U^{\ast}_{X})' \bigr)$ where
$(U^{\ast}_{X})'$ is the $E'$\nobreakdash-module
$\left\{ u \in U^{\ast}_X \; \middle| \; v \, u = 0 \right\}$. We see that
$\dim_\mathbb{C} (U^{\ast}_X)'_{j} = \dim_\mathbb{C} \Tor^{\,S'}_j(R,
\mathbb{C})_{1+j}^{}$ for all $j \in \mathbb{Z}$.

The second operation produces the subcomplex of $\mathbf{L}(U_X^{\ast})$
generated by all of the quadratic polynomials in $I_X$ that are singular at
the closed point $p \in X$.  For the affine representative
$\widetilde{p} \in \mathbb{A}^{n+1}$ of $p \in \mathbb{P}^n$, a polynomial
$f \in (I_X)_2$ is singular at $p \in X$ if and only if the evaluation of its
gradient at this affine representative vanishes:
$\nabla f(\widetilde{p}) = \mathbf{0}$.  If $J$ denotes the
$S$\nobreakdash-ideal generated by the kernel of the linear map
$\nabla|_{\widetilde{p}} \colon (I_X)_2^{} \to T_{\mathbb{P}^2,p}^*$, then
this subcomplex is $\mathbf{L}\bigl((U^{\text{sg}}_X)^{\ast}\bigr)$ where
$U^{\text{sg}}_X$ is the $E$\nobreakdash-module with free presentation
\[
  0 \longleftarrow U^{\text{sg}}_X \longleftarrow E(1) \otimes_\mathbb{C} (
  J_{2})^{\ast} \xleftarrow{\; \alpha^{\text{sg}} \;} E(2) \otimes_\mathbb{C}
  (J_{3})^{\ast}
\]
and $\alpha^{\text{sg}}$ is defined on the generators $(J_{3})^{\ast}$ as the
dual of the multiplication map $S_1 \otimes_\mathbb{C} J_2 \to J_3$.  There is
a canonical isomorphism
$\bigl( \! (U^{\text{sg}}_X)^{\ast} \! \smash{\bigr)}_{\!1}^{} \cong J_2^{}$
of $\mathbb{C}$\nobreakdash-vector spaces and $(U^{\text{sg}}_X)^{\ast}$ is a
$\mathbb{Z}$\nobreakdash-graded submodule of $U_X^{\ast}$.

We demonstrate these two operations with the twisted cubic curve.

\begin{example}
  As in Example~\ref{e:U}, let $C$ denote the rational normal curve in
  $\mathbb{P}^3$.  From the given generators of its homogeneous ideal $I_C$,
  we see that the closed point $p \coloneqq [1 : 0 : 0 : 0] \in \mathbb{P}^3$
  lies on the curve $C$.  With this choice, the homogeneous coordinate ring
  for the codomain of the linear projection away from $p$ is just
  $S' \coloneqq \mathbb{C}[x_1, x_2, x_3]$.  When viewed by restriction of
  scalars as an $S'$\nobreakdash-module, the linear part of the minimal free
  resolution of $I_X(1)$ still has the three generators $f_0, f_1, f_2$, but
  only one syzygy $x_1 f_0 - x_2 f_1 + x_3 f_2 = 0$.  On the other hand, left
  multiplication by $v \coloneqq e_0 \in E_{-1}$ on $U_C$ is equivalent,
  modulo the defining relations for $U_C$, to acting on the free
  $E$\nobreakdash-module $\bigoplus_{i=0}^2 E(1)$ via the matrix
  \[
    \begin{bmatrix}
      e_0 & 0 & 0 \\[-2pt]
      0 & 0 & 0 \\[-2pt]
      0 & 0 & 0 \\
    \end{bmatrix} \, . 
  \]
  Hence, the cokernel of left multiplication by $v$ on $U_C$ has
  $\mathbb{C}$\nobreakdash-vector space basis corresponding to the three
  vectors $\begin{bsmallmatrix*} 1 & 0 & 0 \end{bsmallmatrix*}^{\transpose},
  \begin{bsmallmatrix*} 0 & 1 & 0 \end{bsmallmatrix*}^{\transpose},
  \begin{bsmallmatrix*} 0 & 0 & 1 \end{bsmallmatrix*}^{\transpose},
  \begin{bsmallmatrix*} e_1 & 0 & 0 \end{bsmallmatrix*}^{\transpose} \in
  \bigoplus_{i=0}^2 E(1)$, so we deduce that
  $\dim_{\mathbb{C}} (U_C^{\ast})'_{1} = 3$,
  $\dim_{\mathbb{C}} (U_C^{\ast})'_{2} = 1$, and
  $\dim_{\mathbb{C}} (U_C^{\ast})'_{j} = 0$ for all other $j$.

  The only quadratic polynomial in $I_C$ that is singular at the closed point
  $p \coloneqq [1 : 0 : 0 : 0] \in \mathbb{P}^3$ is the generator
  $f_0 = x_{2}^{2} - x_{1}^{} x_3^{}$.  It follows that $J = \ideal{f_0}$ and
  Betti table of the $S$\nobreakdash-module $S/J$ is
    \[
    \begin{array}[b]{c|ccccccccccc}
      _{i} \backslash^{j} & 0 & 1  \\ \hline
      0 & 1 & 0 \\[-2pt]
      1 & 0 & 1 \\
    \end{array} \; . 
  \]
  Choosing the four cubic binomials $h_0 \coloneqq x_{3} f_0$,
  $h_1 \coloneqq x_{2} f_0$, $h_2 \coloneqq x_{1} f_0$, and
  $h_3 \coloneqq x_{0} f_0$ as a basis for $J_3$, it follows that the left
  $E$\nobreakdash-module homomorphism
  $\alpha^{\text{sg}} \colon \bigoplus_{i=0}^{3} E(2) \to E(1)$ corresponds to
  the matrix $\begin{bmatrix*}
    e_{3} & e_{2} & e_{1} & e_{0} \\
  \end{bmatrix*}$.  Thus, the first row of the Betti table corresponds to the
  left $E$\nobreakdash-module
  $U_C^{\text{sg}} \coloneqq \operatorname{Coker}(\alpha^{\text{sg}})$.  From
  the given free presentation, we may verify directly that
  $\dim_{\mathbb{C}} (U_C^{\text{sg}})_{-1} = 1$ and
  $\dim_{\mathbb{C}} (U_C^{\text{sg}})_{-j} = 0$ for all other $j$.  In
  particular, the standard basis vector in the free $E$\nobreakdash-module
  $E(1)$ surjects onto a $\mathbb{C}$\nobreakdash-vector space basis for
  $(U_C^{\text{sg}})_{-1}$.  Foreshadowing the next lemma, we also observe
  that the coimage of multiplication by $v$ on $U_C$ is spanned by the vector
  $\begin{bsmallmatrix*} 1 & 0 & 0 \end{bsmallmatrix*}^{\transpose} \in
  \bigoplus_{i=0}^2 E(1)$ and corresponds to $U_C^{\text{sg}}$.
\end{example}

Having gathered this background and notation, we record a couple of
observations.  This lemma formalizes our heuristic that evaluating matrices of
linear forms at a point on the variety relates the linear syzygies of a
variety to those of its inner projection.

\begin{lemma}
  \label{l:exterior}
  Let $p$ be a closed point in $\mathbb{P}^n$, let
  $\widetilde{p} \coloneqq (\tilde{p}_0, \tilde{p}_1, \dotsc, \tilde{p}_n) \in
  \mathbb{A}^{n+1}$ be an affine representative of $p$, and let
  $v \coloneqq \tilde{p}_0 \, e_0 + \tilde{p}_1 \, e_1 + \dotsb + \tilde{p}_n
  \, e_n$ be the corresponding element in $E$.
  \begin{compactenum}[\upshape i.]
  \item For a general closed point $p \in X$, the condition
    $(U_X^{\ast})_{i+1}^{} \neq 0$ for some $i \geqslant 1$ implies that we have
    $0 \neq v \, (U_X^{\ast})_{i+1}^{} \subset (U_X^{\ast})_{i}^{}$.
  \item For any closed point $p \in X$, the product $v \, U_X^{\ast}$ lies in
    the $E$\nobreakdash-module $(U^{\text{\upshape sg}}_X)^{\ast}$.
  \end{compactenum}
\end{lemma}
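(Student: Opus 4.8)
The plan is to translate both statements into the language of the Bernstein--Gelfand--Gelfand correspondence and then exploit one elementary identity: \emph{if $\sum_{k=0}^{n} x_k\, q_k = 0$ is a linear syzygy among quadrics $q_0, q_1, \dotsc, q_n \in (I_X)_2$, then the quadric $\sum_{k=0}^{n} \tilde{p}_k\, q_k$ is singular at the point $p$}. This follows by applying $\partial/\partial x_l$ to the syzygy, evaluating at the affine representative $\widetilde{p}$, and using that every $q_k$ vanishes at $\widetilde{p}$ because $p \in X$; the surviving relation is exactly $\nabla \bigl( \sum_k \tilde{p}_k\, q_k \bigr)(\widetilde{p}) = \mathbf{0}$.

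For part~i, I would first observe that, by the definition of the functor $\mathbf{L}$, left multiplication by $v$ on $U_X^{\ast}$ coincides with the differential $\partial_{i+1} \colon S \otimes_{\mathbb{C}} (U_X^{\ast})_{i+1} \to S \otimes_{\mathbb{C}} (U_X^{\ast})_{i}$ of the linear complex $\mathbf{L}(U_X^{\ast})$ evaluated at $\widetilde{p}$, since both send $\phi$ to $\sum_j \tilde{p}_j\, e_j\, \phi$. Hence it is enough to show that, whenever $(U_X^{\ast})_{i+1} \neq 0$ with $i \geqslant 1$, the matrix of linear forms representing $\partial_{i+1}$ is not the zero matrix: any nonzero entry of it is then a nonzero linear form, which cannot vanish on the non-degenerate irreducible variety $X$, so it stays nonzero at a general closed point of $X$, forcing $v\, (U_X^{\ast})_{i+1} \neq 0$. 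To rule out $\partial_{i+1} = 0$, note that the free presentation of $U_X$ exhibits it as a quotient of $E(1) \otimes_{\mathbb{C}} \bigl( (I_X)_2 \bigr)^{\ast}$, so $U_X$ is generated in degree $-1$ and therefore $(U_X)_{-(i+1)} = E_{-1}\, (U_X)_{-i}$ for every $i \geqslant 1$; but $\partial_{i+1} = 0$ would mean $e_j\, \phi = 0$ for all $j$ and all $\phi \in (U_X^{\ast})_{i+1}$, which dualizes to $E_{-1}\, (U_X)_{-i} = 0$ and then forces $(U_X)_{-(i+1)} = 0$, contradicting $(U_X^{\ast})_{i+1} \neq 0$. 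Intersecting the finitely many dense open subsets coming from the finitely many relevant indices $i$ produces the general point asserted in the statement.

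For part~ii, let $\rho \colon U_X \twoheadrightarrow U_X^{\text{sg}}$ be the surjection of $E$-modules induced by the inclusion $J \subseteq I_X$, where $J$ is generated by the quadrics in $I_X$ singular at $p$, and set $K \coloneqq \ker \rho$. Dualizing over $\mathbb{C}$ identifies $(U_X^{\text{sg}})^{\ast}$ with the $\mathbb{Z}$-graded submodule $\{ \phi \in U_X^{\ast} : \phi|_K = 0 \}$ of $U_X^{\ast}$, and since $v$ acts on $U_X^{\ast}$, up to sign, as the transpose of its action on $U_X$, the desired containment $v\, U_X^{\ast} \subseteq (U_X^{\text{sg}})^{\ast}$ is equivalent to the vanishing $v\, K = 0$ in $U_X$. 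A diagram chase on the parallel free presentations of $U_X$ and $U_X^{\text{sg}}$ shows that $K$ is the image of $E(1) \otimes_{\mathbb{C}} K_{-1}$, where $K_{-1} = \ker \bigl( \bigl( (I_X)_2 \bigr)^{\ast} \to (J_2)^{\ast} \bigr) \subseteq (U_X)_{-1}$, so $K$ is generated in degree $-1$ and it suffices to prove $v\, K_{-1} = 0$. Unwinding the presentation, the element $v\, \psi$ for $\psi \in K_{-1}$ is the class of $v \otimes \psi$ in $(U_X)_{-2} = \operatorname{Coker}\bigl( \mu^{\ast} \bigr)$, where $\mu \colon S_1 \otimes_{\mathbb{C}} (I_X)_2 \to (I_X)_3$ is the multiplication map; pairing $v \otimes \psi$ against a linear syzygy $\sum_k x_k \otimes q_k \in \ker \mu$ produces $\psi \bigl( \sum_k \tilde{p}_k\, q_k \bigr)$, which vanishes because that quadric lies in $J_2$ by the opening identity while $\psi$ annihilates $J_2$. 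Hence $v \otimes \psi \in \operatorname{im}( \mu^{\ast} )$, so $v\, \psi = 0$; and because $K = E \cdot K_{-1}$ and $E$ is graded-commutative, $v\, ( \omega\, \psi ) = ( v\, \omega )\, \psi = \pm\, \omega\, ( v\, \psi ) = 0$ for every homogeneous $\omega \in E$ and $\psi \in K_{-1}$, giving $v\, K = 0$.

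Most of the work is homological bookkeeping---keeping track of the $\mathbb{Z}$-gradings, the $\mathbb{C}$-linear duals, and the signs in the correspondence; the substantive inputs are only the differentiated-syzygy identity driving part~ii and the use of non-degeneracy in part~i, so the main obstacle is organizing the exterior-algebra computation cleanly rather than finding a new idea.
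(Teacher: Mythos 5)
Your proof is correct and follows essentially the same route as the paper's: part~i is the same evaluation at a general point of the matrix of linear forms representing multiplication (using non-degeneracy to keep an entry nonzero), and part~ii rests on the identical differentiated-syzygy identity, merely organized dually --- you show $v$ annihilates the kernel of $U_X \twoheadrightarrow U^{\text{sg}}_X$ instead of showing directly that $v\,(U_X^{\ast})_2 \subseteq J_2$ and then invoking cogeneration in degree one. The only genuine addition is your explicit justification, via generation of $U_X$ in degree $-1$, that the matrix of linear forms is nonzero, a point the paper's proof leaves implicit.
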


\begin{proof}
  From the definition, we see that $(U_X^{\ast})_{i}^{} = 0$ for all
  $i \leqslant 0$.
  \begin{compactenum}[\itshape i.]
  \item By choosing bases $\{ u^{(i+1)}_{r} \}$ and $\{ u^{(i)}_{s} \}$ for
    the $\mathbb{C}$\nobreakdash-vector spaces $(U_X^{\ast})_{i+1}^{}$ and
    $(U_X\!^{\ast})_{i}^{}$ satisfying
    $e_{\!j} \, u^{(i+1)}_r = \sum_s^{} c_{j,r,s} \, u_s^{(i)}$ for all
    $0 \leqslant j \leqslant n$ and some $c_{j,r,s} \in \mathbb{C}$, the
    $E$\nobreakdash-module homomorphism from $(U_X^{\ast})_{i+1}^{}$ to
    $(U_X^{\ast})_{i}^{}$ defined by multiplication with $v$ is represented
    by the matrix whose $(r,s)$\nobreakdash-entry is the number
    $\sum_{j=0}^n c_{j,r,s} \, \tilde{p}_{\!j}$.  Since $X$ is non-degenerate
    and $p \in X$ is general, this matrix is nonzero, so the image
    $v \, (U_X^{\ast})_{i+1}^{} \subseteq (U_X^{\ast})_{i}^{}$ is nonzero.
  \item By definition, the $E$\nobreakdash-module $U^{\text{sg}}_X$ is
    generated by $(U^{\text{sg}}_X)_{-1}^{} \cong J_2^{\ast}$, so the
    $E$\nobreakdash-module $(U^{\text{sg}}_X)^{\ast}$ is cogenerated by
    $\bigl( \! (U^{\text{sg}}_X)^{\ast} \! \smash{\bigr)}_{\!1}^{} \cong
    J_2^{}$.  Hence, it suffices to show that, for all $i \geqslant 2$, all
    $v' \in E_{2-i}$, and all $u \in (U_X^{\ast})_i^{}$, we have
    $v' \, v \, u \in \bigl( \! (U^{\text{sg}}_X)^{\ast} \!
    \smash{\bigr)}_{\!1}^{} \cong J_2^{}$.  This reduces to proving that
    $v \, u \in \bigl( \! (U^{\text{sg}}_X)^{\ast} \! \smash{\bigr)}_{\!1}^{}
    \cong J_2^{}$ for all $u \in (U_X^{\ast})_2^{}$.  By choosing bases
    $\{ u^{(2)}_r \}$ and $\{ u^{(1)}_s \}$ for the
    $\mathbb{C}$\nobreakdash-vector spaces $(U_X^{\ast})_2^{}$ and
    $(U_X^{\ast})_{1}^{}$ satisfying
    $e_{\!j}^{} \, u^{(2)}_r = \sum_{s}^{} c_{j,r,s}^{} \, u^{(1)}_s$ for all
    $0 \leqslant j \leqslant n$ and some $c_{j,r,s} \in \mathbb{C}$, it
    follows that
    $v \, u^{(2)}_r = \sum_{s}^{} \sum_{j=0}^n c_{j,r,s}^{} \,
    \tilde{p}_{\!j}^{} \, u^{(1)}_s$.  If the set $\{ f_s \}$ of quadratic
    polynomials in $S$ is the basis of $(I_X)_2$ corresponding the
    $\{ u^{(1)}_s \}$, then we have
    $\nabla|_{\widetilde{p}} ( v \, u^{(2)}_r ) = \sum_{s} \sum_{j=0}^n
    c_{j,r,s}^{} \, \tilde{p}_{\!j}^{} \, \nabla f_s^{} (\widetilde{p})$.
    However, the map
    $\partial_2 \colon S \otimes_\mathbb{C} U_2 \to S \otimes U_1$ generates
    the linear syzygies among the polynomials $\{ f_s \}$, so we also have
    $\sum_{s}^{} \sum_{j=0}^n c_{j,r,s}^{} \, x_{\!j}^{} \, f_s^{} = 0$.  Since
    $p \in X$ and $f_s \in I_X$, we see that $f_s(\widetilde{p}) = 0$.  Thus,
    the product rule implies that
    $\mathbf{0} = \nabla|_{\widetilde{p}} \bigl( \sum_{s} \sum_{j=0}^n
    c_{j,r,s}^{} \, x_{\!j}^{} \, \nabla f_s^{} \bigr) = \sum_{s} \sum_{j=0}^n
    c_{j,r,s}^{} \, \tilde{p}_{\!j}^{} \, \nabla f_s^{} (\widetilde{p})$, from
    which we deduce that
    $v \, u^{(2)}_{r} \in \bigl( \! (U^{\text{sg}}_X)^{\ast} \!
    \smash{\bigr)}_{\!1}^{} \cong J_2^{}$ as required. \qedhere
  \end{compactenum}
\end{proof}

With these preparations, we present a counterpart to Corollary~7.13 in
\cite{Eis} showing that length of the linear part of a minimal free resolution
can drop by at most one under a general inner projection.  Identifying the
left $E$\nobreakdash-module corresponding to the linear part in the minimal
free resolution of the image is the critical insight.  Proposition~3.16 in
\cite{AN} presents another approach using Koszul cohomology.

\begin{proposition}
  \label{p:linPart}
  Let $X \subseteq \mathbb{P}^n$ be a non-degenerate complex subvariety. For
  any subset $\Gamma$ of $k$ general closed points in $X$, we have
  $\len(X) \leqslant k + \len \bigl( \pi_{\Gamma}(X) \bigr)$.
\end{proposition}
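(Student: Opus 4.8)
The plan is to reduce to the case $k = 1$ and then use Lemma~\ref{l:exterior} to transport top-degree information from the $E$-module $U_X^{\ast}$ into its submodule $(U^{\text{sg}}_X)^{\ast}$, which records the linear syzygies of the quadrics surviving the projection. For the reduction I induct on $k$: writing a general set $\Gamma$ of $k$ closed points as $\Gamma = \Gamma' \cup \{ p \}$, where $\Gamma'$ is a general set of $k - 1$ points in $X$ and $p$ is a general point of $X$, the image $\bar{p} \coloneqq \pi_{\Gamma'}(p)$ is a general point of $\pi_{\Gamma'}(X)$ because $\pi_{\Gamma'}$ restricts to a dominant rational map $X \dashrightarrow \pi_{\Gamma'}(X)$, and $\pi_{\Gamma}(X) = \pi_{\{\bar{p}\}}\bigl( \pi_{\Gamma'}(X) \bigr)$. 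Granting the case $k = 1$, the induction hypothesis gives $\len(X) \leqslant (k-1) + \len\bigl( \pi_{\Gamma'}(X) \bigr) \leqslant (k-1) + 1 + \len\bigl( \pi_{\{\bar{p}\}}(\pi_{\Gamma'}(X)) \bigr) = k + \len\bigl( \pi_{\Gamma}(X) \bigr)$.

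For the base case, fix a general point $p \in X$, choose coordinates with $p = [1 : 0 : \dotsb : 0]$ so that $S' \coloneqq \mathbb{C}[x_1, x_2, \dotsc, x_n]$ is the homogeneous coordinate ring of the ambient $\mathbb{P}^{n-1}$, and let $v \in E_{-1}$ correspond to the affine representative $\widetilde{p} = (1, 0, \dotsc, 0)$. Set $m \coloneqq \len(X)$; since $\len\bigl( \pi_{\{p\}}(X) \bigr) \geqslant 0$ we may assume $m \geqslant 2$. By construction $(U_X^{\ast})_m \neq 0$ while $(U_X^{\ast})_j = 0$ for $j > m$, so Lemma~\ref{l:exterior}~(i) applied with $i = m - 1$ shows that $v \, (U_X^{\ast})_m$ is a nonzero subspace of $(U_X^{\ast})_{m-1}$, and Lemma~\ref{l:exterior}~(ii) places it inside $(U^{\text{sg}}_X)^{\ast}$; hence $\bigl( (U^{\text{sg}}_X)^{\ast} \bigr)_{m-1} \neq 0$. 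As $\mathbf{L}\bigl( (U^{\text{sg}}_X)^{\ast} \bigr)$ is the linear strand of the minimal free resolution of $S/J$ (Corollary~7.11 in \cite{Eis} applied to $S/J$), where $J$ is the $S$-ideal generated by the quadrics in $I_X$ singular at $p$, this says that $\Tor_{m-1}^{\,S}(S/J, \mathbb{C})_{m} \neq 0$.

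It remains to identify $\Tor_{m-1}^{\,S}(S/J, \mathbb{C})_m$ with a term in the linear strand of $\pi_{\{p\}}(X)$. A quadric in $I_X$ is singular at $p$ precisely when it does not involve the variable $x_0$, so Lemma~\ref{l:qpGen}~(i) gives $J_2 = (I_X)_2 \cap S'_2 = \bigl( I_{\pi_{\{p\}}(X)} \bigr)_2$, whence $J = J' \cdot S$ for the $S'$-ideal $J' \coloneqq \bigl\langle (I_{\pi_{\{p\}}(X)})_2 \bigr\rangle$. Since $S$ is a free $S'$-module, the isomorphism $S/J \cong (S'/J') \otimes_{S'} S$ yields $\Tor_j^{\,S}(S/J, \mathbb{C})_d \cong \Tor_j^{\,S'}(S'/J', \mathbb{C})_d$ for all $j$ and $d$. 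Finally, because $J'$ is generated in degree $2$ and coincides in degree $2$ with the saturated ideal $I' \coloneqq I_{\pi_{\{p\}}(X)}$, the quotient $I'/J'$ is generated in degrees at least $3$, so $\Tor_j^{\,S'}(I'/J', \mathbb{C})_{j+1} = 0$ for every $j$; feeding this into the long exact sequence of $0 \to J' \to I' \to I'/J' \to 0$ forces $\Tor_j^{\,S'}(S'/J', \mathbb{C})_{j+1} \cong \Tor_j^{\,S'}(S'/I', \mathbb{C})_{j+1}$ for all $j$. Chaining these identifications, $\Tor_{m-1}^{\,S'}\bigl( S'/I_{\pi_{\{p\}}(X)}, \mathbb{C} \bigr)_m \neq 0$, so $\len\bigl( \pi_{\{p\}}(X) \bigr) \geqslant m - 1$, which is the desired inequality.

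The main obstacle is this last identification. The subcomplex $\mathbf{L}\bigl( (U^{\text{sg}}_X)^{\ast} \bigr)$ sees only the quadratic part of the ideal of the inner projection, so one must verify that the length of the linear strand of a homogeneous ideal is unchanged upon discarding all of its generators of degree at least $3$, and that descending the base ring from $S$ to $S'$ leaves graded Tor untouched. Both checks are routine, but together they constitute exactly the bridge from ``quadrics kept under projection'' to ``linear syzygies of the image''; everything else is powered by Lemma~\ref{l:exterior} and the bookkeeping of the Bernstein--Gelfand--Gelfand dictionary.
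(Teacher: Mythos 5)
Your proof is correct and follows the paper's argument in its essentials: reduce to the case $k = 1$ and combine the two parts of Lemma~\ref{l:exterior} to push a nonzero top-degree class of $U_X^{\ast}$ into $(U^{\text{sg}}_X)^{\ast}$, yielding $\len(X) \leqslant 1 + \len\bigl( \pi_{\{p\}}(X) \bigr)$. The only divergence is in the final identification of $\max\{ j \mid ((U^{\text{sg}}_X)^{\ast})_j \neq 0 \}$ with $\len\bigl( \pi_{\{p\}}(X) \bigr)$: the paper observes that the relevant quadrics and all their linear syzygies lie in $S'$ and appeals to the Bernstein--Gelfand--Gelfand dictionary, whereas you justify the same step by flat base change along $S' \hookrightarrow S$ together with a long exact sequence showing the linear strand depends only on the degree-two part of the ideal --- a sound, somewhat more explicit route to the identical conclusion.
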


\begin{proof}
  By construction, we have
  $\len(X) = \max \left\{ j \in \mathbb{N} \; \middle| \;
    \smash{(U^{\ast}_{X})_{j}^{}} \neq 0 \right\}$. It suffices to consider
  the case $k = 1$.  Let $p \in X$ be a general closed point, let
  $\widetilde{p} = (\tilde{p}_0, \tilde{p}_1, \dotsc, \tilde{p}_n) \in
  \mathbb{A}^{n+1}$ be an affine representative, and let
  $v \coloneqq \tilde{p}_0 \, e_0 + \tilde{p}_1 \, e_1 + \dotsb + \tilde{p}_n
  \, e_n \in E$.  Set $W'$ to be the kernel of the
  $\mathbb{C}$\nobreakdash-linear map $S_1 \to \mathbb{C}$ defined by the
  evaluation at $\widetilde{p}$ and $S' \coloneqq \operatorname{Sym}(W')$. As
  in the proof of part~\emph{vi} of Lemma~\ref{l:qpGen}, the quadratic
  polynomials in $I_X$ that lie in $W'$ are precisely the quadrics that are
  singular at the closed point $p \in X$.  It follows that
  $(I_{\pi_{\{p\}}(X)})_2^{} = J_2$ and all of their higher syzygies lie in
  $S'$.  By design, $S'$ is annihilated by $v$, so we see that
  $v \, (U^{\text{\upshape sg}}_X)^{\ast} = 0$ and
  $\len\bigl( \pi_{\{p\}}(X) \bigr) = \max\left\{ j \in \mathbb{N} \; \middle|
    \; \smash{\bigl( (U^{\text{sg}}_X)^{\ast} \bigr)_{j}^{}} \neq 0
  \right\}$. Since $\deg(v) = -1$, Lemma~\ref{l:exterior} certifies that
  $\len(X) \leqslant 1 + \len \bigl( \pi_{\{p\}}(X) \bigr)$.
\end{proof}

\begin{proof}[Proof of Theorem~\ref{t:2}]
  Let $k \coloneqq \qp(X)$.  We first claim that $k = 1$ implies that
  $\len(X) = 1$.  To see this, suppose that the polynomials
  $f_1, f_2, \dotsc, f_m$ form a basis for the $\mathbb{C}$\nobreakdash-vector
  space $(I_X)_2$.  For a general closed point $p \in X$ with affine
  representative $\widetilde{p} \in \mathbb{A}^{n+1}$, Corollary~\ref{c:minqp}
  shows that the gradients $\nabla f_{j}(\widetilde{p})$, for all
  $1 \leqslant j \leqslant m$, are linearly independent.  If the polynomials
  $f_1, f_2, \dotsc, f_m$ have a linear syzygy, then there are linear forms
  $g_1, g_2, \dotsc, g_m$ such that $\sum_{j=1}^m g_{j} f_{j} = 0$.  Taking
  the gradient and evaluating at $\widetilde{p}$ gives
  $\sum_{j=1}^m g_{j}(\widetilde{p}) \nabla f_{j}(\widetilde{p}) = 0$, so
  $g_{j}(\widetilde{p}) = 0$ for all $1 \leqslant j \leqslant m$.  Since $X$
  is non-degenerate, we deduce that all of the linear forms $g_{j}$ are
  identically zero.  Thus, there are no linear syzygies and $\len(X) = 1$.

  Now, assume that $k > 1$.  Choose a general set
  $\{ p_1, p_2, \dotsc, p_k \}$ of closed points in $X$ and, for all
  $1 \leqslant j \leqslant k$, set
  $\pi_{j} \coloneqq \pi_{\{ p_1, p_2, \dotsc, p_{j} \} }$.  Combining
  Definition~\ref{d:qp} and Lemma~\ref{l:qpGen} affirms that
  $\bigl( I_{\pi_{k}(X)} \bigr)_2 = 0$ and
  $\bigl( I_{\pi_{k-1}(X)} \bigr)_2 \neq 0$, so the previous paragraph implies
  that $\len\bigl( \pi_{k-1}(X) \bigr) = 1$.  Since
  Proposition~\ref{p:linPart} establishes that
  $\len(X) \leqslant (k-1) + \len\bigl( \pi_{k-1}(X) \bigr)$, we conclude that
  $k \geqslant \len(X)$.
\end{proof}

We first show that the inequality in Theorem~\ref{t:2} may fail for a
reducible variety.

\begin{example}[Bounds for a reducible variety]
  The variety $X \subset \mathbb{P}^2$ determined by the monomial ideal
  $\ideal{x_0 x_1, x_0 x_2} = \ideal{x_0} \cap \ideal{x_1, x_2}$ is just the
  union of the $x_0$\nobreakdash-axis and the point $[1:0:0]$.  Since the
  Betti table of its homogeneous coordinate ring is
  \[
    \begin{array}[b]{c|ccccccccccc}
      _{i} \backslash^{j} & 0 & 1 & 2 \\ \hline
      0 & 1 & 0 & 0 \\[-2pt]
      1 & 0 & 2 & 1      
    \end{array} \; ,
  \]
  we deduce that $\len(X) = 2$.  On the other hand, the rational map given by
  projecting away from the point $[1 : 0 : 0] \in X$ surjects onto
  $\mathbb{P}^1$.  Hence, the ideal of the image contains no quadratic
  polynomials, so we have $\qp(X) = 1 < 2 = \len(X)$.
\end{example}

The next two examples demonstrates that the inequality in Theorem~\ref{t:2}
can be strict.  They also answer Question~5.8 in \cite{HK15} negatively.

\begin{example}[Bounds for general canonical curves]
  \label{e:strict}
  Suppose that $X \subset \mathbb{P}^{g-1}$ is a general canonical curve of
  genus $g$ and set $k \coloneqq \qp(X)$.  As in Example~\ref{e:genCan}, the
  Riemann--Roch Theorem implies that
  $\dim_{\mathbb{C}} (I_X)_2 = \binom{g+1}{2} - 3g + 3$.  Since
  Corollary~\ref{c:qpIneq} gives $(g+1)g - 6g + 6 \leqslant 2k(g-2) -k(k-1)$,
  we obtain the lower bound
  $k \geqslant \bigl\lceil g - \frac{3}{2} - \frac{1}{2}\sqrt{8g - 15}
  \bigr\rceil$.  Furthermore, Green's Conjecture, which is explained in
  Section~9B of \cite{Eis} and proven in \cite{Voi}, establishes that
  $\green(X) = \bigl\lceil \frac{1}{2}(g-2) \bigr\rceil - 1$ and
  $\len(X) = g - 3 - \green(X) = \bigl\lceil \frac{1}{2} (g-2) \bigr\rceil$.
  Thus, we have $\qp(X) > \len(X)$ for all $g \geqslant 10$.
\end{example}

\begin{remark}
  Repurposing Example~\ref{e:strict}, we see that there exists a curve
  $X \subset \mathbb{P}^n$ and a general point $p \in X$ such that
  $\len(X) = \len \bigl( \pi_{\{ p \}}(X) \bigr)$.  Indeed, some inner
  projection of a general canonical curve of genus at least ten must yield a
  curve with the desired properties.
\end{remark}

\begin{example}[Bounds for curves of high degree]
  \label{e:high}
  Suppose that $d \gg g$ and $X$ is a smooth irreducible complex curve of
  genus $g$ and gonality $\delta$ embedded by a complete linear series of
  degree $d$ in $\mathbb{P}^n$.  Corollary~8.4 in \cite{Eis} shows that
  $n = d-g$ and $\dim_{\mathbb{C}} (I_X)_2 = \binom{d-g+2}{2} - (2d-g+1)$.
  Setting $k \coloneqq \qp(X)$, Corollary~\ref{c:qpIneq} gives
  \[
    (d-g+2)(d-g+1) - 2(2d-g+1) \leqslant 2k(d-g-1) - k(k-1) \, ,
  \]
  so we obtain
  $k \geqslant \bigl\lceil d - g - \frac{1}{2} - \frac{1}{2}\sqrt{8g +1}
  \bigr\rceil$.  Moreover, the Gonality Conjecture, which is discussed in
  Section~8C of \cite{Eis} and proven in \cite{EL}, asserts that
  $\len(X) = d-g-\delta$.  Therefore, the hypothesis that
  $2 \delta > 1 + \sqrt{8g+1}$ implies that $k > \len(X)$.  As already
  observed in Example~\ref{e:genCan}, the gonality of a general curve is
  $\bigl\lceil \frac{1}{2}(g+2) \bigr\rceil$, so we have the strict inequality
  $\qp(X) > \len(X)$ whenever $X$ is a general curve of genus at least $7$.
\end{example}

We close this section with a curious relationship between three of our
favourite numerical invariants of an irreducible complex subvariety.

\begin{proposition}
  \label{p:qpEq}
  Let $X \subseteq \mathbb{P}^n$ be a non-degenerated irreducible complex
  subvariety.  If there exists a variety $X' \subseteq \mathbb{P}^n$ of
  minimal degree such that $X \subseteq X'$ and $\qp(X) = \qp(X')$, then we
  have $\len(X) = \len(X')$.  Under the additional hypothesis that $X$ is
  totally real, we also have $\py(X) = \py(X')$.
\end{proposition}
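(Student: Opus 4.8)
The plan is to exploit the squeeze between the lower bound of Theorem~\ref{t:2} and the upper bound coming from the inner-projection inequality for $\len$. First I would record that $X \subseteq X'$ gives the monomorphism $\pi_\Gamma^\sharp$ relating coordinate rings, and more importantly that for any finite set $\Gamma$ of general closed points of $X$ lying on $X'$, the containment $\pi_\Gamma(X) \subseteq \pi_\Gamma(X')$ holds, so $(I_{\pi_\Gamma(X')})_2 \subseteq (I_{\pi_\Gamma(X)})_2$. Set $k \coloneqq \qp(X) = \qp(X')$ and choose a general set $\Gamma$ of $k-1$ points on $X$; by Part~\emph{iv} of Lemma~\ref{l:qpGen} applied to both $X$ and $X'$ (and the fact that a general set of points on $X$ is also general on $X'$ since $X$ is irreducible), neither $(I_{\pi_\Gamma(X)})_2$ nor $(I_{\pi_\Gamma(X')})_2$ vanishes, whereas projecting away from one more general point kills the quadrics in both. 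In particular $\qp\bigl(\pi_\Gamma(X)\bigr) = \qp\bigl(\pi_\Gamma(X')\bigr) = 1$.

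Next I would invoke the key fact, proved inside the proof of Theorem~\ref{t:2}, that $\qp(Y) = 1$ forces $\len(Y) = 1$ for any non-degenerate irreducible $Y$; this applies to $Y = \pi_\Gamma(X)$ since an inner projection of a non-degenerate irreducible variety is again non-degenerate and irreducible. Hence $\len\bigl(\pi_\Gamma(X)\bigr) = 1$. Now Proposition~\ref{p:linPart} gives $\len(X) \leqslant (k-1) + \len\bigl(\pi_\Gamma(X)\bigr) = k$. On the other hand, Theorem~\ref{t:2} gives $\len(X) \leqslant \qp(X) = k$ directly — but I want equality with $\len(X')$, so I instead compute $\len(X')$ exactly. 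Since $X'$ has minimal degree it is $2$-regular, so its resolution is linear throughout and $\len(X') = \codim(X')$; and by Theorem~\ref{t:maxqp} applied to $X'$ we have $\qp(X') = \codim(X')$, hence $\len(X') = \qp(X') = k$. Combining, $\len(X) \leqslant k = \len(X')$. For the reverse inequality I would use Lemma~\ref{l:qpBasic}(ii): the inclusion $X \subseteq X'$ and $\qp(X) = \qp(X')$, together with monotonicity of $\len$ under such an inclusion — more carefully, I would argue that the linear strand of $I_{X'}$ injects into that of $I_X$ (every linear syzygy of the quadrics cutting out $X'$ is a linear syzygy among quadrics in $I_X$), giving $\len(X) \geqslant \len(X') = k$. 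Thus $\len(X) = \len(X')$.

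For the totally-real addendum, apply Theorem~\ref{t:3} to $X$: since $\qp(X) = \qp(X') = \codim(X')$ and $\codim(X') = n - \dim(X')$, we get $\py(X) \geqslant n + 1 - \qp(X) = 1 + \dim(X')$. For the matching upper bound, note that $X'$ is a real $2$-regular variety containing $X$ (minimal-degree varieties are $2$-regular), so Theorem~\ref{t:2reg} (equivalently Lemma~\ref{l:inclPy} combined with $\py(X') = 1 + \dim(X')$ from Corollary~32 of \cite{BSV17} for totally-real $2$-regular varieties) yields $\py(X) \leqslant \py(X') = 1 + \dim(X')$. Hence $\py(X) = 1 + \dim(X') = \py(X')$.

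The main obstacle I anticipate is the bookkeeping in the first paragraph: making sure that "general set of points on $X$" can be taken simultaneously general for the purposes of computing $\qp$ of both $X$ and $X'$ and of applying Proposition~\ref{p:linPart} to $X$, and checking that $\qp(X) = \qp(X')$ genuinely propagates to $\qp(\pi_\Gamma(X)) = \qp(\pi_\Gamma(X')) = 1$ rather than merely $\leqslant 1$. The inequality $\qp(\pi_\Gamma(X)) \geqslant 1$ needs $(I_{\pi_\Gamma(X)})_2 \neq 0$, which follows from $\qp(X) = k$ (so projecting away fewer than $k$ points cannot destroy all quadrics) via Lemma~\ref{l:qpGen}(v); the analogous statement for $X'$ is identical. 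Everything else is assembling results already in hand.
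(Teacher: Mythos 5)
Your proposal reaches both conclusions by essentially the same squeeze as the paper: $\len(X') = \codim(X') = \qp(X') = \qp(X) \geqslant \len(X) \geqslant \len(X')$ for the first claim, and $\py(X') = 1 + \dim(X') \geqslant \py(X) \geqslant n + 1 - \qp(X) = 1 + \dim(X')$ for the second. Three remarks. First, the entire detour through $\qp\bigl(\pi_\Gamma(X)\bigr) = 1$, the implication $\qp = 1 \Rightarrow \len = 1$, and Proposition~\ref{p:linPart} is redundant, as you note yourself: Theorem~\ref{t:2} already gives $\len(X) \leqslant \qp(X)$ in one line. Second, the parenthetical claim that a general set of points on $X$ is also general on $X'$ is false: $X$ is a proper closed subvariety of $X'$, so tuples of its points lie in a proper closed subset of $(X')^r$ and can never be general there. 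Fortunately nothing downstream uses $\qp\bigl(\pi_\Gamma(X')\bigr) = 1$, so this is harmless. Third, and most substantively, the inequality $\len(X) \geqslant \len(X')$ is the one step you leave genuinely unjustified. The observation that every linear syzygy among the quadrics of $I_{X'}$ is a linear syzygy among quadrics of $I_X$ only produces a map from the linear strand of $I_{X'}$ to that of $I_X$; it does not show that a nonzero Betti number $\Tor_j^{\,S}(S/I_{X'}, \mathbb{C})_{1+j} \neq 0$ survives, since the corresponding Koszul homology class could become a boundary for the larger ideal. The persistence of the linear strand under passing to a non-degenerate subvariety is a genuine theorem, namely Corollary~1.28 in \cite{AN}, which is exactly what the paper cites at this point. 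With that citation in place of your heuristic, your argument is complete and coincides with the paper's.
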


\begin{proof}
  Since $X'$ is a variety of minimal degree, Corollary~\ref{c:mindeg} proves
  that $1 + \dim(X') = \py(X')$ and Theorem~\ref{t:maxqp} shows that
  $\qp(X') = \codim(X')$.  Hence, Lemma~\ref{l:inclPy} and Theorem~\ref{t:3}
  give
  \begin{align*}
    1 + \dim(X')
    = \py(X') 
    \geqslant \py(X) 
    &\geqslant n + 1 - \qp(X) \\
    &= n + 1 - \qp(X') 
    = n + 1 - \codim(X') = 1 +\dim(X')
  \end{align*}
  which shows that $\py(X) = \py(X')$.  As $X'$ is a variety of minimal
  degree, Corollaries~A2.62--A2.64 in \cite{Eis} also imply that
  $\len(X') = \codim(X') = \qp(X')$.  Given the inclusion $X \subseteq X'$,
  Corollary~1.28 in \cite{AN} asserts that $\len(X) \geqslant \len(X')$.
  Theorem~\ref{t:2} yields
  $\len(X') = \qp(X') = \qp(X) \geqslant \len(X) \geqslant \len(X')$ which
  demonstrates that $\len(X') = \len(X)$.
\end{proof}

\section{Toric applications}
\label{s:toric}

\noindent
In this closing section, we refine our estimates on quadratic persistence for
projective toric subvarieties.  Notably, we compute the quadratic persistence
for any Veronese embedding of the projective plane and for the embedded toric
variety corresponding to any sufficiently tall lattice prism.

For a nested pair $X \subseteq X'$ of irreducible complex varieties,
Part~\emph{i} of Lemma~\ref{l:qpBasic} establishes the inequality
$\qp(X) \geqslant \qp(X')$.  Our initial goal is to show that the opposite
inequality holds in a special situation.  To elucidate this partial converse,
we devise a new kind of transversality.  Given a finite set $\Gamma'$ of
closed points in $X' \subseteq \mathbb{P}^n$ spanning a
$(k-1)$\nobreakdash-plane, we write
$\pi_{\Gamma'} \colon \mathbb{P}^n \dashrightarrow \mathbb{P}^{n-k}$ for the
linear projection away from $\Span(\Gamma')$; see Section~\ref{s:qp}.

\begin{definition}
  Let $X' \subseteq \mathbb{P}^n$ be an irreducible complex subvariety.  A
  subvariety $X \subseteq X'$ is \define{transverse to general inner
    projections} if, for all $0 \leqslant k \leqslant \dim(X')$ and all
  subsets $\Gamma'$ of $k$ general closed points in $X'$, there exists a
  subset $\Gamma$ of $k$ general closed points in $X$ such that the image
  $\pi_{\Gamma}(X')$ is projectively equivalent to the image
  $\pi_{\Gamma'}(X')$.
\end{definition}

This definition captures those nested pairs of subvarieties for which the
points in the smaller variety are sufficient to compute the quadratic
persistence of the larger variety.

\begin{lemma}
  Let $X' \subseteq \mathbb{P}^n$ be an irreducible complex subvariety.  If
  $X \subseteq X'$ is transverse to general inner projections, then the
  quadratic persistence of $X'$ is equal to the smallest cardinality of a
  finite set $\Gamma$ of general closed points in $X$ such that the ideal
  $I_{\pi_{\Gamma}(X')}$ contains no quadratic polynomials.
\end{lemma}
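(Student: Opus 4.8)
The plan is to prove the asserted equality by two inequalities. Write $q$ for the right-hand side, that is, the least cardinality of a general finite set $\Gamma$ of closed points of $X$ for which the saturated ideal $I_{\pi_\Gamma(X')}$ has no quadratic part. One inequality, $\qp(X') \leqslant q$, is immediate from Definition~\ref{d:qp}: if $\Gamma \subseteq X$ realizes $q$, then $\Gamma$ is in particular a finite set of closed points of $X'$ (because $X \subseteq X'$) with $(I_{\pi_\Gamma(X')})_2 = 0$, so $\qp(X') \leqslant \abs{\Gamma} = q$.

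For the reverse inequality, set $k \coloneqq \qp(X')$. Since $X'$ is irreducible, Part~\emph{iv} of Lemma~\ref{l:qpGen} applied to $X'$ shows that the value of $\Gamma' \mapsto \kept_{\Gamma'}(X')$ on a general set $\Gamma'$ of $k$ closed points of $X'$ equals the minimum of this function on $(X')^k$, and that minimum is $0$ by the definition of quadratic persistence; hence $I_{\pi_{\Gamma'}(X')}$ is quadric-free for general $\Gamma' \subseteq X'$ with $\abs{\Gamma'} = k$. Assume first that $k \leqslant \dim(X')$. The hypothesis that $X$ is transverse to general inner projections then produces a general set $\Gamma$ of $k$ closed points of $X$ with $\pi_\Gamma(X')$ projectively equivalent to $\pi_{\Gamma'}(X')$. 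A projective equivalence between subvarieties of $\mathbb{P}^{n-k}$ comes from a linear change of coordinates, which carries the graded saturated ideal of one onto that of the other; in particular it matches their degree-two parts, so $(I_{\pi_\Gamma(X')})_2 \cong (I_{\pi_{\Gamma'}(X')})_2 = 0$. Thus $\Gamma$ witnesses $q \leqslant k = \qp(X')$, and in this case $q = \qp(X')$.

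It remains to treat the case $k = \qp(X') > \dim(X') \eqqcolon d$, which does occur (for instance when $X'$ has minimal degree and large codimension, so $\qp(X') = \codim(X')$ by Theorem~\ref{t:maxqp}). Here the plan is to peel off points $d$ at a time. Using transversality with the parameter $d$, pick a general set $\Gamma_0 \subseteq X$ of $d$ closed points so that $Y' \coloneqq \pi_{\Gamma_0}(X')$ is projectively equivalent to a general $d$-fold inner projection of $X'$; then $\dim(Y') = d$, and combining Parts~\emph{iv} and~\emph{v} of Lemma~\ref{l:qpGen} with the observation that the dominant rational map $\pi_{\Gamma_0}|_{X'}$ takes general points to general points gives $\qp(Y') = \qp(X') - d$. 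The restriction $\pi_{\Gamma_0}|_X \colon X \dashrightarrow \pi_{\Gamma_0}(X) \subseteq Y'$ again takes general points to general points; provided one knows that $\pi_{\Gamma_0}(X) \subseteq Y'$ is itself transverse to general inner projections, an induction on $\qp$ (with base case $\qp \leqslant \dim$, handled above) applied to this nested pair produces a general set of $\qp(Y') = k - d$ closed points of $\pi_{\Gamma_0}(X)$ whose inner projection of $Y'$ is quadric-free; lifting these to general points of $X$ and adjoining $\Gamma_0$ yields a general set of $k$ closed points of $X$ whose inner projection of $X'$ is quadric-free, so $q \leqslant k$.

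I expect the main obstacle to be exactly this inheritance step: verifying that a general inner projection of a variety transverse to general inner projections is again transverse to general inner projections. Unwinding composites of linear projections reduces the claim to comparing two $(d{+}m)$-fold inner projections of $X'$, one built from general points of $X$ and one built from general points of $X'$; the transversality of $X \subseteq X'$ controls such comparisons when the number of points does not exceed $\dim(X')$, so the delicate point is to route the comparison so that this constraint is met at every stage, or else to weaken the inductive hypothesis to the partial transversality that survives each projection. The remaining ingredients — the generic computation of $\kept$ and of $\qp$ in Lemma~\ref{l:qpGen} and the fact that a projective equivalence preserves the degree-two part of a saturated ideal — are routine.
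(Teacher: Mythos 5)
Your opening inequality together with your treatment of the case $\qp(X') \leqslant \dim(X')$ is exactly the paper's entire proof: invoke Part~\emph{iv} of Lemma~\ref{l:qpGen} to realize $\qp(X')$ by a \emph{general} set $\Gamma'$ of closed points of $X'$, use transversality to replace $\Gamma'$ by a general set $\Gamma \subseteq X$ of the same cardinality with $\pi_{\Gamma}(X')$ projectively equivalent to $\pi_{\Gamma'}(X')$, and conclude because a projective equivalence matches the degree-two parts of the saturated ideals. The paper makes no case distinction on $k = \qp(X')$; it applies the transversality hypothesis with $k = \qp(X')$ outright.

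The point you isolate --- that the definition of transversality only quantifies over $0 \leqslant k \leqslant \dim(X')$ while $\qp(X')$ routinely exceeds $\dim(X')$ (indeed $\qp(X') = \codim(X')$ for varieties of minimal degree, and the applications in Example~\ref{e:qpVer} and Proposition~\ref{p:prism} project away numbers of points on the order of the codimension) --- is a genuine discrepancy in the paper, which its one-paragraph proof silently ignores. The proof of Proposition~\ref{p:projEq} actually produces the transversality property for every $k$ with $X' \not\subseteq \Span(\{q, p_1, \dotsc, p_k\})$, so the bound $\dim(X')$ in the definition is evidently not the intended one; on that reading your first case already finishes the proof and the second case is unnecessary. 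If one insists on the literal definition, your proposed repair is incomplete exactly where you say it is: the inductive step requires that $\pi_{\Gamma_0}(X) \subseteq \pi_{\Gamma_0}(X')$ be again transverse to general inner projections, and this does not follow formally from the hypothesis --- projective equivalence of the images of $X'$ says nothing a priori about how the image of $X$ sits inside them after further projection. So your argument is complete on the paper's intended reading, and the only gap is in a case that traces to a defect in the stated definition rather than to your reasoning; the cleanest fix is to restate the definition with the correct range of $k$ rather than to pursue the peeling induction.
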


\begin{proof}
  By Part~\emph{iv} of Lemma~\ref{l:qpGen}, the quadratic persistence
  $\qp(X')$ is the smallest $k \in \mathbb{N}$ for which there exists a finite
  set $\Gamma'$ of general closed points in $X'$ such that $k = \abs{\Gamma'}$
  and the ideal $I_{\pi_{\Gamma'}(X')}$ contains no quadratic polynomials.
  Since $X$ is transverse to general inner projections, there exists a subset
  $\Gamma$ of general closed points in $X$ such that image $\pi_{\Gamma}(X')$
  is projectively equivalent to the image $\pi_{\Gamma'}(X')$.  Thus, the
  ideal $I_{\pi_{\Gamma}(X')}$ contains no quadratic polynomials which
  completes the proof.
\end{proof}

To relate the number of quadratic polynomials in the homogeneous ideals of $X$
and $X'$, it is convenient to have the following notation.

\begin{definition}
  For the nested sequence $X \subseteq X' \subseteq \mathbb{P}^n$ of complex
  subvarieties, the \define{quadratic residual} of $X$ in $X'$ is defined to
  be the integer
  $\qr(X,X') \coloneqq \dim_{\mathbb{C}}(I_X)_2 - \dim_{\mathbb{C}}
  (I_{X'})_2$.
\end{definition}

Like in Example~3.1 of \cite{Har}, a variety $X' \subseteq \mathbb{P}^n$ is a
\define{cone} if there exists a proper subvariety $X$ and a closed point
$q \in X'$ not lying on $X$ such that $X'$ is the union of the lines
$\Span(\{q, p \})$ spanned by the point $q \in X'$ and the points $p \in X$.
Every such point $q$ is a \define{vertex} of the cone $X'$.  Having collected
the requisite definitions, we now bound the quadratic persistence from above.

\begin{theorem}
  \label{t:trans}
  Let $X \subset \mathbb{P}^n$ be a non-degenerate irreducible complex
  subvariety.  Suppose that $X' \subseteq \mathbb{P}^n$ is a cone containing
  $X$ such that $\dim(X') = 1 + \dim(X)$ and, for a vertex $q \in X'$, we have
  $\pi_{\{q\}}(X') = \pi_{\{q\}}(X)$.  Assuming that $X \subset X'$ is also
  transverse to general inner projections, we obtain the inequality
  $\qp(X) \leqslant \max \{ \qp(X'), \qr(X,X') \}$.
\end{theorem}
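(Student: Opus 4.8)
The plan is to project $X$ away from a carefully chosen set $\Gamma$ of $m \coloneqq \max\{\qp(X'), \qr(X,X')\}$ general closed points of $X$ and to prove directly that $(I_{\pi_\Gamma(X)})_2 = 0$, which by Definition~\ref{d:qp} forces $\qp(X) \leqslant m$. The governing idea is that, relative to the vertex $q$, the quadrics in $I_X$ split into the quadrics pulled back from the cone $X'$ and a complementary space whose dimension is exactly $\qr(X,X')$; one block of $\Gamma$ of size $\qp(X')$ will annihilate the first type, and the remaining $\qr(X,X')$ points will annihilate the second.

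First I would pick homogeneous coordinates so that $q = [1 : 0 : \dotsb : 0]$, making $\pi_{\{q\}}$ the map $[x_0 : x_1 : \dotsb : x_n] \mapsto [x_1 : \dotsb : x_n]$. Since $X$ is non-degenerate and $X \subseteq X'$, the cone $X'$ is non-degenerate, so $Y \coloneqq \pi_{\{q\}}(X') = \pi_{\{q\}}(X)$ is non-degenerate in $\mathbb{P}^{n-1}$; because $X'$ is the cone over $Y$ with vertex $q$, this gives $I_{X'} = I_Y \cdot S$ and, using $(I_Y)_1 = 0$, the identity $(I_{X'})_2 = (I_Y)_2 \subseteq \mathbb{C}[x_1, x_2, \dotsc, x_n]_2$. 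Next I would consider the $\mathbb{C}$-linear map $\phi \colon (I_X)_2 \to S_1$ with $\phi(h) \coloneqq \partial h / \partial x_0$. A quadric in $I_X$ that involves only $x_1, x_2, \dotsc, x_n$ vanishes on $\pi_{\{q\}}(X) = Y$ and hence lies in $(I_{X'})_2$; combined with the obvious reverse inclusion, this identifies $\ker \phi = (I_{X'})_2$, so $\dim_{\mathbb{C}} \operatorname{im} \phi = \qr(X, X')$.

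I would then use the elementary fact that, for any $\mathbb{C}$-subspace $W \subseteq S_1$ and $d \coloneqq \dim_{\mathbb{C}} W$ general closed points of $X$, evaluation at those points is an isomorphism from $W$ onto $\mathbb{C}^d$; this follows by an easy induction on the number of points, exploiting that no nonzero linear form vanishes on the irreducible non-degenerate variety $X$. I would next choose a general set $\Gamma = \{ p_1, p_2, \dotsc, p_m \}$ of closed points in $X$ meeting two Zariski-open conditions at once: since $m \geqslant \qp(X')$, the lemma asserting that the quadratic persistence of $X'$ is computed by projecting away from general points of $X$ — together with parts~\emph{ii} and \emph{iv} of Lemma~\ref{l:qpGen} — gives $(I_{\pi_\Gamma(X')})_2 = 0$; and since $m \geqslant \qr(X,X') = \dim_{\mathbb{C}} \operatorname{im} \phi$, the fact above gives $(\operatorname{im} \phi) \cap L_\Gamma = 0$, where $L_\Gamma$ is the space of linear forms vanishing at all of $p_1, p_2, \dotsc, p_m$. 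Both loci are dense open in $X^m$, so a common general $\Gamma$ exists.

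Finally I would derive a contradiction from the assumption that $(I_{\pi_\Gamma(X)})_2 \neq 0$. By part~\emph{i} of Lemma~\ref{l:qpGen}, a nonzero element of this space corresponds to a nonzero $f \in (I_X)_2$ that is singular at every point of $\Gamma$; then $(\partial f/\partial x_0)(p_i) = 0$ for all $i$, so $\phi(f) \in (\operatorname{im} \phi) \cap L_\Gamma = 0$, forcing $f \in \ker \phi = (I_{X'})_2$. But then $f$ is a nonzero quadric in $I_{X'}$ singular at every point of $\Gamma$, so part~\emph{i} of Lemma~\ref{l:qpGen} applied to $X'$ yields $(I_{\pi_\Gamma(X')})_2 \neq 0$, contradicting the choice of $\Gamma$. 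Hence $(I_{\pi_\Gamma(X)})_2 = 0$ and $\qp(X) \leqslant \abs{\Gamma} = m = \max\{ \qp(X'), \qr(X,X') \}$. I expect the real difficulty to lie not in any single step but in the accounting that turns the naive bound $\qp(X) \leqslant \qp(X') + \qr(X,X')$ into the sharper $\max$: one has to see that the splitting of $(I_X)_2$ through $\phi$ lets the two portions of $\Gamma$ act on the two summands \emph{independently}, and that a single general $\Gamma$ of size $m$ can be made to satisfy every genericity requirement simultaneously — which is where the transversality hypothesis and the semicontinuity statements in Lemma~\ref{l:qpGen} are indispensable.
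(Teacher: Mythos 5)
Your proof is correct and is essentially the paper's argument in coordinates adapted to the vertex: your map $\phi(h) = \partial h/\partial x_0$ is exactly the paper's map $\operatorname{del}_X \colon (I_X)_2 \to \prod_{p \in X}(T_{X',p}/T_{X,p})^{*}$, since at a general $p \in X$ the normal direction $T_{X',p}/T_{X,p}$ is spanned by the direction toward $q = [1:0:\dotsb:0]$, and your identification $\ker\phi = (I_{X'})_2$ with $\dim_{\mathbb{C}}\operatorname{im}\phi = \qr(X,X')$, together with the use of transversality to kill the quadrics of $X'$, mirrors the paper step for step. Your version merely makes the kernel computation more algebraic (via $I_{X'} = I_Y \cdot S$ rather than tangent-space decompositions) and spells out the genericity conditions and the final contradiction that the paper leaves terse.
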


\begin{proof}
  Set $k' \coloneqq \qp(X')$. Part~\emph{iv} of Lemma~\ref{l:qpGen} implies
  that, for a general subset $\Gamma'$ of $k'$ closed points in $X'$, the
  ideal $I_{\pi_{\Gamma'}(X')}$ contains no quadratic polynomials.  Since
  $X \subset X'$ is transverse to general inner projections, there exists a
  subset $\Gamma$ of $k'$ closed points in $X$ such that the ideal
  $I_{\pi_{\Gamma}(X')}$ also contains no quadratic polynomials.  If
  necessary, enlarge the subset $\Gamma$, by appending additional general
  closed points in $X$, to ensure that $\abs{\Gamma} \geqslant \qr(X,X')$.  We
  now claim that the homogeneous ideal $I_{\pi_{\Gamma}(X)}$ contains no
  quadratic polynomials.

  To prove this claim, fix an affine representative
  $\widetilde{p} \in \mathbb{A}^{n+1}$, for each closed point $p \in X$, and
  consider the map
  $\operatorname{del}_X \colon (I_X)_2 \to \prod_{p \in X} (T_{X',p} /
  T_{X,p})^*$ defined by
  $\operatorname{del}_X(f) \coloneqq \bigl( \nabla f(\widetilde{p}) \; \big|
  \; p \in X \bigr)$.  We first show that the kernel of this map is
  $(I_{X'})_2$.  The variety $X$ cannot be contained in the singular locus of
  $X'$, because the line corresponding to nonsingular point in $X$ is
  nonsingular in $X'$.  Hence, at a general closed point $p \in X$, the
  tangent space $T_{X',p}$ is naturally isomorphic to
  $T_{X,p} \oplus \Span(\{p,q\})$.  If
  $f \in \operatorname{Ker}(\operatorname{del}_X)$, then the gradient of $f$
  evaluated at the point $\tilde{p}$ is orthogonal to the line
  $\Span(\{p,q\})$.  Since $X$ is non-degenerate, it follows that $f$ vanishes
  to order at least to $2$ at the vertex $q$, so our assumption that
  $\pi_{\{q\}}(X) = \pi_{\{q\}}(X')$ guarantees that $f \in (I_{X'})_2$.  From
  our characterization of the kernel, we see that the image of
  $\operatorname{del}_X$ has dimension $\qr(X,X')$.  Therefore, we deduce that
  $(I_{\pi_{\Gamma}(X)})_2 = 0$ and $k' \geqslant \qp(X)$.
\end{proof}

\begin{remark}
  \label{r:qp=}
  Under the additional hypothesis that $\qr(X,X') \leqslant \qp(X')$,
  Part~\emph{ii} of Lemma~\ref{l:qpBasic} and
  Theorem~\ref{t:trans} combine to prove that $\qp(X) = \qp(X')$.
\end{remark}

To apply Theorem~\ref{t:trans}, we need a better tool for recognizing
subvarieties that are transverse to general inner projections.  The next lemma
and proposition forge such a tool.

\begin{lemma}
  \label{l:scale}
  Let $X' \subseteq \mathbb{P}^n$ be an irreducible complex subvariety that is
  a cone with vertex $q \in X'$.  For any positive integer $k$ and closed
  points
  $p_1^{}, p_2^{}, \dotsc, p_k^{}, p'_1, p'_2, \dotsc, p'_k \in X' \setminus
  \{q\}$ such that $X'$ is not contained in the linear space
  $\Span(\{q,p_1, p_2, \dotsc, p_k\})$ and
  $q \in \Span(\{p_{\!j}^{}, p'_{\!j} \})$ for all
  $1 \leqslant j \leqslant k$, the inner projections
  $\pi_{\{ p_1^{}, p_2^{}, \dotsc, p_k^{} \}}(X')$ and
  $\pi_{\{ p'_1, p'_2, \dotsc, p'_k\}}(X')$ are projectively equivalent.
\end{lemma}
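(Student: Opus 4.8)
The plan is to reduce the statement to the existence of a projective automorphism $g$ of $\mathbb{P}^n$ with $g(X') = X'$ and $g(p_j) = p'_j$ for all $1 \leqslant j \leqslant k$. Granting such a $g$, it carries the hyperplanes through $\Span(\{p_1, \dotsc, p_k\})$ bijectively to the hyperplanes through $\Span(\{p'_1, \dotsc, p'_k\})$, so there is an isomorphism $\psi$ of $\mathbb{P}^{n-k}$ with $\pi_{\{p'_1, \dotsc, p'_k\}} \circ g = \psi \circ \pi_{\{p_1, \dotsc, p_k\}}$; evaluating both sides on $X'$ and using $g(X') = X'$ gives $\pi_{\{p'_1, \dotsc, p'_k\}}(X') = \psi\bigl( \pi_{\{p_1, \dotsc, p_k\}}(X') \bigr)$, which is exactly the asserted projective equivalence.

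To construct $g$, I would first pick homogeneous coordinates $x_0, x_1, \dotsc, x_n$ on $\mathbb{P}^n$ with $q = [0 : \dotsb : 0 : 1]$. Because $X'$ is a cone with vertex $q$, it is the cone over $\pi_{\{q\}}(X') \subseteq \mathbb{P}^{n-1}$ and its saturated ideal $I_{X'}$ is generated by forms not involving $x_n$. Hence, for each linear form $\ell \in \mathbb{C}[x_0, \dotsc, x_{n-1}]_1$, the automorphism $g_\ell$ of $\mathbb{P}^n$ given by $x_i \mapsto x_i$ for $0 \leqslant i < n$ and $x_n \mapsto x_n + \ell$ fixes $q$ and acts trivially on a generating set of $I_{X'}$, so $g_\ell(X') = X'$; I will search for $g$ among these $g_\ell$.

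The hypothesis $q \in \Span(\{p_j, p'_j\})$ forces $p_j$, $p'_j$, $q$ to be collinear, so $\pi_{\{q\}}(p_j) = \pi_{\{q\}}(p'_j)$. Choosing an affine representative $\mathbf{b}^{(j)} \in \mathbb{C}^n \setminus \{\mathbf{0}\}$ of this common point and lifting, I may write $p_j = [\mathbf{b}^{(j)} : c_j]$ and $p'_j = [\mathbf{b}^{(j)} : c'_j]$ with $c_j, c'_j \in \mathbb{C}$, so that $\widehat{p}_j \coloneqq (\mathbf{b}^{(j)}, c_j)$ and $\widehat{q} \coloneqq (0, \dotsc, 0, 1)$ are affine lifts. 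Requiring $g_\ell(p_j) = p'_j$ for all $j$ then amounts to solving the linear system $\ell(\mathbf{b}^{(j)}) = c'_j - c_j$, $1 \leqslant j \leqslant k$, for $\ell$. As is implicit in writing the inner projections $\pi_{\{p_1, \dotsc, p_k\}}$ and $\pi_{\{p'_1, \dotsc, p'_k\}}$, I may assume both point sets span $(k-1)$-planes. If $q \in \Span(\{p_1, \dotsc, p_k\})$, then this $(k-1)$-plane equals $\Span(\{q, p_1, \dotsc, p_k\}) = \Span(\{q, p'_1, \dotsc, p'_k\})$ and hence contains the $(k-1)$-plane $\Span(\{p'_1, \dotsc, p'_k\})$; comparing dimensions, the two projection centres coincide and the two images are literally equal. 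Otherwise $q \notin \Span(\{p_1, \dotsc, p_k\})$, and any relation $\sum_j t_j\, \mathbf{b}^{(j)} = \mathbf{0}$ would give $\sum_j t_j\, \widehat{p}_j = \bigl( \sum_j t_j\, c_j \bigr)\, \widehat{q}$; this vector cannot be a nonzero multiple of $\widehat{q}$ — that would place $q$ in $\Span(\{p_1, \dotsc, p_k\})$ — so it is zero, and the linear independence of $\widehat{p}_1, \dotsc, \widehat{p}_k$ forces all $t_j = 0$. Thus $\mathbf{b}^{(1)}, \dotsc, \mathbf{b}^{(k)}$ are linearly independent in $\mathbb{C}^n$, so $\ell$ may be prescribed freely on them and the system is solvable; the resulting $g_\ell$ is the required $g$.

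I expect the treatment of solvability, together with the reduction to point sets spanning $(k-1)$-planes, to be the main obstacle. It is here that the non-degeneracy hypothesis $X' \not\subseteq \Span(\{q, p_1, \dotsc, p_k\})$ and the cone structure enter, ruling out the degenerate configurations in which the vectors $\mathbf{b}^{(j)}$ become dependent — equivalently, in which the vertex already lies in $\Span(\{p_1, \dotsc, p_k\})$ — and guaranteeing that both inner projections target projective spaces of the same dimension. The remaining steps are formal manipulations with linear systems of hyperplanes.
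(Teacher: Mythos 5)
Your proof is correct, but it takes a genuinely different route from the paper's. The paper fixes one linear subspace $\mathbb{P}^{n-k}$ complementary to both $\Span(\{p_1,\dotsc,p_k\})$ and $\Span(\{p'_1,\dotsc,p'_k\})$ and sweeps out the cone by its ruling lines $L_{q'} = \Span(\{q,q'\})$: since each such span $\Span(L_{q'} \cup \{p_1,\dotsc,p_k\})$ contains $q$ and hence the common subspace $\Span(\{q,p_1,\dotsc,p_k\}) = \Span(\{q,p'_1,\dotsc,p'_k\})$, the two projections send every ruling line to the same subset of $\mathbb{P}^{n-k}$, and density of the union of these lines gives that the two images are literally equal. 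You instead construct an explicit shear $x_n \mapsto x_n + \ell$ that preserves the cone (because $I_{X'}$ is generated by forms free of $x_n$) and slides each $p_{\!j}$ to $p'_{\!j}$ along its ruling line, then transport the projection through this automorphism; your solvability analysis (the case split on whether $q \in \Span(\{p_1,\dotsc,p_k\})$, and the linear independence of the vectors $\mathbf{b}^{(j)}$ in the remaining case) is complete and correct. What your approach buys is an explicit element of the stabilizer of $X'$ in $\operatorname{PGL}_{n+1}(\mathbb{C})$ realizing the equivalence, making transparent that the shears fixing the vertex act transitively on each punctured ruling line; the paper's argument is shorter and yields the marginally stronger conclusion that the two images coincide inside a common complement. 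One small correction to your closing commentary: the hypothesis $X' \not\subseteq \Span(\{q,p_1,\dotsc,p_k\})$ is never actually used in your argument (it is the paper's line-sweeping proof that needs it, to ensure the lines $L_{q'}$ cover a dense subset of $X'$), and it does not by itself exclude the configuration $q \in \Span(\{p_1,\dotsc,p_k\})$ --- that case is excluded not by this hypothesis but handled separately by your observation that the two projection centres then coincide.
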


\begin{proof}
  Since our hypothesis include the conditions
  $q \in \Span(\{p_{\!j}^{}, p'_{\!j}\})$ and $q \not\in \{ p_{\!j}^{}, p'_{\!j} \}$,
  we see that
  $\Span(p_1^{}, p_2^{}, \dotsc, p_k^{}, q) = \Span(p'_1, p'_2, \dotsc, p'_k,
  q)$.  For each $q' \in X' \setminus \Span(q, p_1, p_2, \dotsc, p_k)$,
  consider the line $L_{q'} = \Span(\{q,q'\})$.  The union of all $L_{q'}$
  covers a dense subset of $X'$, because $X'$ is a cone.  By fixing a linear
  subspace $\mathbb{P}^{n-k}$ that is complementary to both
  $\Span(p_1^{}, p_2^{}, \dotsc, p_k^{})$ and
  $\Span(p'_1, p'_2, \dotsc, p'_k)$, we deduce that
  $\pi_{\{ p_1^{}, p_2^{}, \dotsc, p_k^{} \}}(L_{q'}) = \pi_{\{ p'_1, p'_2,
    \dotsc, p'_k\}}(L_{q'})$.
\end{proof}

\begin{proposition}
  \label{p:projEq}
  Let $X \subset \mathbb{P}^n$ be an irreducible complex subvariety and let
  $X' \subseteq \mathbb{P}^n$ be a cone containing $X$.  If
  $\pi_{\{q\}}|_X \colon X \dashrightarrow \pi_{\{q\}}(X)$ is birational map
  and $\pi_{\{q\}}(X)$ is projectively equivalent to $\pi_{\{q\}}(X')$, then
  the subvariety $X \subseteq X'$ is transverse to general inner projections.
\end{proposition}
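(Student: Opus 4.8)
The plan is to invoke Lemma~\ref{l:scale}: for a general inner projection of $X'$ with centre a set $\{p_1', p_2', \dotsc, p_k'\}$ of closed points in $X'$, I would replace each $p_j'$ by a point $p_j \in X$ lying on the same line through the vertex $q$, and then Lemma~\ref{l:scale} would identify the two projections up to projective equivalence. Before carrying this out, I would make a few reductions. The transversality condition is vacuous when $k = 0$, so I fix $1 \leqslant k \leqslant \dim(X')$. If $X = X'$, then taking $\Gamma = \Gamma'$ settles the claim, so I assume $X \subsetneq X'$, which makes $X$ a proper closed subvariety of $X'$. If $X'$ is a linear subspace, then $X$, $X'$, and all of their linear projections are linear subspaces, and linear subspaces of equal dimension inside a common projective space are projectively equivalent, so the condition again holds trivially; hence I assume $X'$ is not linear. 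Finally, since $X \subseteq X'$ forces $\pi_{\{q\}}(X) \subseteq \pi_{\{q\}}(X')$ between irreducible varieties, and these are projectively equivalent hence of equal dimension, I conclude that $\pi_{\{q\}}(X) = \pi_{\{q\}}(X')$. Writing $Y$ for this common image, the variety $X'$ is the cone over $Y$ with vertex $q$.

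Given a general set $\Gamma' = \{p_1', p_2', \dotsc, p_k'\}$ of closed points in $X'$, I would push it forward and then pull it back. Because $X'$ is the cone over $Y$ with vertex $q$, the rational map $\pi_{\{q\}}|_{X'} \colon X' \dashrightarrow Y$ is dominant, so the images $\overline{p}_j' \coloneqq \pi_{\{q\}}(p_j')$ form a general set of $k$ closed points in $Y$. Since $\pi_{\{q\}}|_X \colon X \dashrightarrow Y$ is birational, it restricts to an isomorphism from a dense open subset $U \subseteq X$ onto a dense open subset $V \subseteq Y$; the general points $\overline{p}_j'$ lie in $V$, and I would set $p_j \in U$ to be the preimage of $\overline{p}_j'$. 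Transporting a general configuration along an isomorphism, the set $\Gamma \coloneqq \{p_1, p_2, \dotsc, p_k\}$ is a general set of $k$ closed points in $X$. By construction each $p_j$ lies in the domain of $\pi_{\{q\}}$, so $p_j \neq q$, while $p_j' \neq q$ because a general point of $X'$ avoids the vertex and $p_j \neq p_j'$ because a general point of $X'$ avoids the proper closed subvariety $X$; moreover $\pi_{\{q\}}(p_j) = \pi_{\{q\}}(p_j')$ forces $q$, $p_j$, and $p_j'$ to be collinear, so $q \in \Span(\{p_j, p_j'\})$ for all $1 \leqslant j \leqslant k$.

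To finish, I would check the remaining hypothesis of Lemma~\ref{l:scale}, that $X'$ is not contained in $\Span(\{q, p_1, p_2, \dotsc, p_k\})$: this linear subspace has dimension at most $k \leqslant \dim(X')$, so were it to contain the irreducible variety $X'$ it would equal $X'$, forcing $X'$ to be linear, a case already excluded. Lemma~\ref{l:scale} then gives that $\pi_\Gamma(X')$ and $\pi_{\Gamma'}(X')$ are projectively equivalent. Since $k$ and the general set $\Gamma'$ were arbitrary, this proves that $X \subseteq X'$ is transverse to general inner projections.

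The part needing the most care is the genericity bookkeeping: verifying that a general point of the cone $X'$ projects to a general point of $Y$, and that the birational inverse of $\pi_{\{q\}}|_X$ carries a general set of points in $Y$ to a general set of points in $X$. The rest is organizational, namely separating out the degenerate cases ($X = X'$, or $X'$ a linear subspace) so that the hypotheses of Lemma~\ref{l:scale} are genuinely met; once the centre of the projection has been slid along the fibre lines through $q$, the projective equivalence is immediate from that lemma.
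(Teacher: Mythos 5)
Your proof is correct and follows essentially the same route as the paper: push a general set of points of $X'$ down via $\pi_{\{q\}}$ to the common image, pull back along the birational inverse of $\pi_{\{q\}}|_X$ to obtain points of $X$ on the same lines through the vertex, and conclude with Lemma~\ref{l:scale}. The only difference is that you spell out some genericity and degenerate-case checks (e.g.\ that $X'$ is not contained in $\Span(\{q, p_1, \dotsc, p_k\})$, and that $\pi_{\{q\}}(X) = \pi_{\{q\}}(X')$) that the paper leaves implicit.
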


\begin{proof}
  Let $\tau \colon \pi_{\{q\}}(X) \dashrightarrow X$ be the inverse of the
  birational map $\pi_{\{q\}}|_X \colon X \dashrightarrow \pi_{\{q\}}(X)$.  If
  $p_1, p_2, \dotsc, p_k$ are general closed points in $X'$, then their images
  $\pi_{\{q\}}(p_1), \pi_{\{q\}}(p_2), \dotsc, \pi_{\{q\}}(p_k)$ avoid the
  indeterminacy locus of $\tau$, so set
  $p'_{\!j} \coloneqq \tau \bigl( \pi_{\{q\}}(p_{\!j}^{}) \bigr)$ for
  $1 \leqslant j \leqslant k$.  By construction, we have
  $q \in \Span(\{p_{\!j}^{}, p'_{\!j} \})$ and $q \not\in \{p_{\!j}^{}, p'_{\!j} \}$
  for all $1 \leqslant j \leqslant k$.  Hence, Lemma~\ref{l:scale} establishes
  that $\pi_{\{ p_1^{}, p_2^{}, \dotsc, p_k^{} \}}(X')$ and
  $\pi_{\{ p'_1, p'_2, \dotsc, p'_k\}}(X')$ are projectively equivalent, which
  proves that $X \subseteq X'$ is transverse to general inner projections.
\end{proof}

We illustrate the power of Theorem~\ref{t:trans} and
Proposition~\ref{p:projEq} with a family of examples.

\begin{example}[The quadratic persistence of the Veronese embeddings of
  $\mathbb{P}^2$]
  \label{e:qpVer}
  For all $j \geqslant 2$, the map
  $\nu_{\!j} \colon \mathbb{P}^2 \to \mathbb{P}^{\binom{j+2}{2}-1}$ is defined
  by $[x_0 : x_1 : x_2] \mapsto [x_0^j : x_0^{j-1} x_1^{} : \dotsb : x_2^j]$.
  We claim that
  \[
    \qp \bigl( \nu_{\!j} (\mathbb{P}^2) \bigr) = \tbinom{j+1}{2} \, .
  \]
  To prove this, we proceed by induction on $j$.  In the base case, the
  Veronese surface $\nu_2(\mathbb{P}^2) \subset \mathbb{P}^5$ is a variety of
  minimal degree, so Theorem~\ref{t:maxqp} gives
  $\qp \bigl( \nu_2 (\mathbb{P}^2) \bigr) = \codim \bigl( \nu_2 (\mathbb{P}^2)
  \bigr) = 3 = \binom{2+1}{2}$.  For any $j > 2$, the embedded toric surface
  $\nu_{\!j}(\mathbb{P}^2) \subset \mathbb{P}^{j(j+3)/2}$ corresponds to the
  lattice triangle
  $T_{\!j} \coloneqq \conv\{\mathbf{0}, j \, \mathbf{e}_1, j \, \mathbf{e}_2\}
  \subset \mathbb{R}^2$ where $\mathbf{e}_1, \mathbf{e}_2$ denotes the
  standard basis for $\mathbb{R}^2$; see Example~2.3.15 in \cite{CLS}.
  Consider the following sequence of inner projections: for $i$, decreasing by
  $1$ from $j$ to $1$, project away from the torus-invariant point
  corresponding to the lattice point $(0,i)$.  The final embedded projective
  toric variety corresponds to the lattice polytope
  $P \coloneqq \conv\{ \mathbf{0}, T_{\!j-1} + \mathbf{e}_1\}$ and Part~\emph{v}
  of Lemma~\ref{l:qpGen} shows that
  $\qp \bigl( \nu_{\!j} (\mathbb{P}^2) \bigr) \leqslant j + \qp(X_{P \cap
    \smash{\mathbb{Z}^2}})$.

  We next verify that
  $\qp(X_{P \cap \smash{\mathbb{Z}^2}}) = \qp \bigl( \nu_{\!j-1} (\mathbb{P}^2)
  \bigr)$.  Let $\mathbf{e}_0, \mathbf{e}_1, \mathbf{e}_2$ denote the standard
  basis for $\mathbb{R}^3$ and set
  $P' \coloneqq \conv\{ \mathbf{e}_0, T_{\!j-1} + \mathbf{e}_1 \} \subset
  \mathbb{R} \times \mathbb{R}^2 \cong \mathbb{R}^3$. The coordinate
  projection $\mathbb{R} \times \mathbb{R}^2 \to \mathbb{R}^2$ defines a
  bijection between the lattice points in $P$ and $P'$ and establishes that
  the associated toric varieties are nested in the same ambient projective
  space.  Since the lattice polytope $P'$ is a pyramid, its associated
  embedded projective toric variety $X_{\smash{P' \cap \mathbb{Z}^3}}$ is a
  cone whose vertex corresponds to the lattice point $\mathbf{e}_0 \in P'$, so
  Proposition~\ref{p:projEq} shows that the subvariety
  $X_{P \cap \smash{\mathbb{Z}^2}} \subset X_{\smash{P' \cap \mathbb{Z}^3}}$
  is transverse to general inner projections. Applying Part~\emph{ii} of
  Lemma~\ref{l:qpBasic} and Theorem~\ref{t:trans}, we obtain the inequalities
  $\qp(X_{\smash{P' \cap \mathbb{Z}^3}}) \leqslant \qp(X_{P \cap
    \smash{\mathbb{Z}^2}}) \leqslant \max\{ \qp(X_{\smash{P' \cap
      \mathbb{Z}^3}}), \qr(X_{P \cap \smash{\mathbb{Z}^2}}, X_{\smash{P' \cap
      \mathbb{Z}^2}}) \}$.  We deduce that the lattice polytopes $P$ and $P'$
  are normal from Corollary~2.2.13 in \cite{CLS}.  Hence, combining
  Theorem~5.4.8 and Theorem~9.2.3 in \cite{CLS} with the theory of Ehrhart
  polynomials (see Section~9.4 of \cite{CLS}) yields
  \[
    \qr(X_{P \cap \smash{\mathbb{Z}^2}}, X_{\smash{P' \cap \mathbb{Z}^3}}) =
    \abs{2P' \cap \mathbb{Z}^3} - \abs{2P \cap \mathbb{Z}^2} = \left(
      \tbinom{2j}{2} + \tbinom{j+1}{2} + 1 \right) - \left( \tbinom{2j}{2} + j
      + 1 \right) = \tbinom{j}{2} \, .
  \]
  Because $P'$ is a pyramid over the polygon $T_{\!j-1} + \mathbf{e}_1$, we also
  have
  $\qp(X_{\smash{P' \cap \mathbb{Z}^3}}) = \qp(X_{(T_{\!j-1} + \mathbf{e}_1)
    \cap \smash{\mathbb{Z}^2}})$.  Thus, the induction hypothesis gives
  $\qp(X_{(T_{\!j-1} + \mathbf{e}_1) \cap \smash{\mathbb{Z}^2}}) =
  \qp(X_{T_{\!j-1} \cap \smash{\mathbb{Z}^2}}) = \qp\bigl( \nu_{\!j-1}
  (\mathbb{P}^2) \bigr) = \binom{j}{2}$, so we conclude that
  $\qp(X_{P \cap \smash{\mathbb{Z}^2}}) = \binom{j}{2} = \qp \bigl( \nu_{\!j-1}
  (\mathbb{P}^2) \bigr)$.

  The inequality at the end of the first paragraph together with the equality
  in the second paragraph prove that
  $\qp \bigl( \nu_{\!j} (\mathbb{P}^2) \bigr) \leqslant \binom{j+1}{2}$.  For
  the complementary lower bound, observe that
  \[
    \dim_{\mathbb{C}} \bigl( I_{\nu_{\!j} (\mathbb{P}^2)} \bigr)_2^{} =
    \binom{\binom{j+2}{2} + 1}{2} - \binom{2j+2}{2} =
    \sum_{i=2j+2}^{\binom{j+2}{2}} i = \sum_{i=j+2}^{\binom{j+2}{2}} i -
    \sum_{i=j+2}^{2j+1} i =\sum_{i=j+2}^{\binom{j+2}{2}} (i-3)
  \]
  and the right side is the sum of the codimension of the varieties obtained
  by successively projecting $\nu_{\!j} (\mathbb{P}^2)$ away from a point
  $\binom{j+1}{2}$ times.  Thus, Part~\emph{i} of Lemma~\ref{l:qpBasic} shows
  that we need to project away from at least $\binom{j+1}{2}$ points to
  eliminate all quadratic polynomials, so
  $\qp \bigl( \nu_{\!j} (\mathbb{P}^2) \bigr) \geqslant \binom{j+1}{2}$.
\end{example}

\begin{remark}
  The techniques developed in \cite{Sch17} yield a different proof that
  $\qp \bigl( \nu_j (\mathbb{P}^2) \bigr) = \tbinom{j+1}{2}$.  This completely
  independent approach hinges on knowning the Hilbert function for the square
  of the vanishing ideal for general closed points in $\mathbb{P}^2$; see
  Proposition~4.8 in \cite{IK}.
\end{remark}

\begin{remark}
  The tactic employed in Example~\ref{e:qpVer} to realize a toric variety as a
  subvariety transverse to general inner projections generalizes.  For a
  lattice polytope $P \subset \mathbb{R}^{d}$ and a vertex
  $\mathbf{v} \in P \cap \mathbb{Z}^d$, set
  $P' \coloneqq \conv\{ \mathbf{v} + \mathbf{e}_0, (P \cap \mathbb{Z}^d)
  \setminus \mathbf{v} \} \subset \mathbb{R} \times \mathbb{R}^d \cong
  \mathbb{R}^{d+1}$.  Using Proposition~\ref{p:projEq}, one may verify that
  the toric inclusion
  $X_{P \cap \smash{\mathbb{Z}^d}} \subset X_{\smash{P' \cap
      \mathbb{Z}^{d+1}}}$ is always transverse to general inner projections.
\end{remark}

Our formula for the quadratic persistence of the toric surface
$\nu_{\!j}(\mathbb{P}^2) \subset \mathbb{P}^{j(j+3)/2}$ also produces bounds on
its Pythagoras number, re-proving Theorem~3.6 in \cite{Sch17}

\begin{example}[Pythagoras numbers for the Veronese embeddings of
  $\mathbb{P}^2$]
  \label{e:PP2Low}
  Combining Example~\ref{e:qpVer} and Theorem~\ref{t:3} gives
  $\py \bigl( \nu_{\!j}(\mathbb{P}^2) \bigr) \geqslant \binom{j+2}{2} -
  \binom{j+1}{2} = j+1$. Since Example~\ref{e:PP2Up} shows that
  $\py \bigl( \nu_{\!j}(\mathbb{P}^2) \bigr) \leqslant j+2$, we confirm that
  $\py \bigl( \nu_{\!j}(\mathbb{P}^2) \bigr)$ is either $j+1$ or $j+2$.
\end{example}

We next calculate the quadratic persistence for projective toric subvarieties
arising from a special class of polytopes.  For all positive
$k \in \mathbb{Z}$ and any lattice polytope $P \subset \mathbb{R}^d$, the
prism $P \times [0,k] \subset \mathbb{R}^{d+1}$ is also a lattice polytope.
The ensuing proposition shows that a rational normal scroll containing the
toric variety $X_{(P \times [0,k]) \cap \smash{\mathbb{Z}^{d+1}}}$ determines
its quadratic persistence for all large $k$.

\begin{proposition}
  \label{p:prism}
  For any lattice polytope $P \subset \mathbb{R}^d$ having dimension greater
  than one and any positive integer $k$ greater than or equal to
  $\frac{1}{\dim(P)-1} \abs{P \cap \smash{\mathbb{Z}^d}} - 1$, the quadratic
  persistence of the projective toric subvariety associated to the prism
  $P \times [0,k]$ equals $k \, \abs{P \cap \smash{\mathbb{Z}^d}} - 1$.
  Moreover, we also have
  $\py(X_{(P \times [0,k]) \cap \smash{\mathbb{Z}^{d+1}}}) = 1 + \abs{P \cap
    \mathbb{Z}^d}$ and
  $\len(X_{(P \times [0,k]) \cap \smash{\mathbb{Z}^{d+1}}}) = k \, \abs{P \cap
    \mathbb{Z}^d} - 1$.
\end{proposition}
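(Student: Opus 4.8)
The plan is to pin down $\qp(X)$ by squeezing it between the quadratic persistence of a rational normal scroll containing $X$ and an upper bound produced by a sequence of inner projections, and then to transfer the values of $\len$ and $\py$ from the scroll via Proposition~\ref{p:qpEq}. Set $m \coloneqq \abs{P \cap \mathbb{Z}^d}$, $D \coloneqq \dim(P) \geqslant 2$, and $X \coloneqq X_{(P \times [0,k]) \cap \smash{\mathbb{Z}^{d+1}}} \subseteq \mathbb{P}^{(k+1)m - 1}$; this is a non-degenerate irreducible totally-real subvariety of dimension $D + 1$. The $m$ lines parallel to the interval direction through the lattice points $(p, 0)$ with $p \in P \cap \mathbb{Z}^d$ cover all lattice points of $P \times [0,k]$ and each has lattice length $k$, so the Lawrence prism constructed in the proof of Corollary~\ref{c:lines} is $\Delta_{m-1} \times [0,k]$ and there is a toric inclusion $X \subseteq X'$, where $X' \coloneqq X_{(\Delta_{m-1} \times [0,k]) \cap \smash{\mathbb{Z}^m}}$ is a rational normal scroll of dimension $m$. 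Since a scroll has minimal degree, $\codim(X') = (k+1)m - 1 - m = km - 1$ and Theorem~\ref{t:maxqp} gives $\qp(X') = km - 1$. In particular, Part~\emph{ii} of Lemma~\ref{l:qpBasic} yields the lower bound $\qp(X) \geqslant \qp(X') = km - 1$.

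The heart of the matter is the reverse inequality $\qp(X) \leqslant km - 1$, which I would prove by induction on $m$. If $m = D + 1$, then $P$ is a simplex, $P \times [0,k]$ is itself a Lawrence prism, $X$ is a scroll of dimension $D + 1$, and Theorem~\ref{t:maxqp} gives $\qp(X) = \codim(X) = (D+1)(k+1) - 1 - (D+1) = km - 1$. Assume now $m > D + 1$. Choose a vertex $v$ of $P$ such that $\bar{P} \coloneqq \conv\bigl( (P \cap \mathbb{Z}^d) \setminus \{v\} \bigr)$ is still $D$-dimensional and $(P \cap \mathbb{Z}^d) \setminus \{v\}$ affinely generates the same lattice as $P \cap \mathbb{Z}^d$; since $m \geqslant D + 2$, such a vertex exists, and one checks $\bar{P} \cap \mathbb{Z}^d = (P \cap \mathbb{Z}^d) \setminus \{v\}$, so $\abs{\bar{P} \cap \mathbb{Z}^d} = m - 1$. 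Each of the points $(v, k), (v, k-1), \dotsc, (v, 1)$ is, in turn, a vertex of the convex hull of the lattice points not yet removed, hence the corresponding coordinate point lies on the current variety; projecting away from these $k$ points and applying Part~\emph{v} of Lemma~\ref{l:qpGen} repeatedly gives $\qp(X) \leqslant k + \qp\bigl( X_{Q \cap \smash{\mathbb{Z}^{d+1}}} \bigr)$, where $Q \coloneqq \conv\bigl( \{(v,0)\} \cup (\bar{P} \times [0,k]) \bigr)$ and $Q \cap \mathbb{Z}^{d+1} = \{(v,0)\} \cup \bigl( (\bar{P} \times [0,k]) \cap \mathbb{Z}^{d+1} \bigr)$.

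It remains to prove $\qp\bigl( X_{Q \cap \smash{\mathbb{Z}^{d+1}}} \bigr) = \qp\bigl( X_{(\bar{P} \times [0,k]) \cap \smash{\mathbb{Z}^{d+1}}} \bigr)$. Lifting the extra lattice point into a new coordinate produces the pyramid $Q' \coloneqq \conv\bigl( \{(v,0) + \mathbf{e}_0\} \cup (\bar{P} \times [0,k]) \bigr)$, so that $X_{Q' \cap \smash{\mathbb{Z}^{d+2}}}$ is a cone with vertex the coordinate point $q$ of $(v,0) + \mathbf{e}_0$, its dimension is one more than $\dim X_{Q \cap \smash{\mathbb{Z}^{d+1}}}$, and $\pi_{\{q\}}(X_{Q'}) = \pi_{\{q\}}(X_Q) = X_{(\bar{P} \times [0,k]) \cap \smash{\mathbb{Z}^{d+1}}}$; moreover $X_{Q \cap \smash{\mathbb{Z}^{d+1}}} \subseteq X_{Q' \cap \smash{\mathbb{Z}^{d+2}}}$ is transverse to general inner projections by Proposition~\ref{p:projEq}, the choice of $v$ being exactly what makes $\pi_{\{q\}}$ birational on $X_{Q \cap \smash{\mathbb{Z}^{d+1}}}$. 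Combining Part~\emph{ii} of Lemma~\ref{l:qpBasic}, Theorem~\ref{t:trans}, and Remark~\ref{r:qp=}, it suffices to check $\qr(X_Q, X_{Q'}) \leqslant \qp(X_{Q'})$. Counting lattice points in the relevant Minkowski sums, exactly as in Example~\ref{e:qpVer} (the prisms are normal, so Ehrhart data controls the second graded piece of each coordinate ring), one finds $\qr(X_Q, X_{Q'}) = s\,(k+1)$, where $s$ is the number of $p \in (P \cap \mathbb{Z}^d) \setminus \{v\}$ for which $v + p$ is a sum of two lattice points of $P$, neither equal to $v$. The first lattice point on each of the at least $D$ edges of $P$ through $v$ violates this condition, so $s \leqslant m - 1 - D$, whence $\qr(X_Q, X_{Q'}) \leqslant (m - 1 - D)(k+1) \leqslant k(m-1) - 1$, the last step being precisely the hypothesis $k \geqslant \tfrac{1}{D-1}\abs{P \cap \mathbb{Z}^d} - 1$. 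The inductive hypothesis gives $\qp\bigl( X_{(\bar{P} \times [0,k]) \cap \smash{\mathbb{Z}^{d+1}}} \bigr) = k(m-1) - 1$, so $\qp(X) \leqslant k + k(m-1) - 1 = km - 1$, and therefore $\qp(X) = km - 1 = \qp(X')$.

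With $\qp(X) = \qp(X')$ in hand, Proposition~\ref{p:qpEq} applies to the inclusion $X \subseteq X'$ of a totally-real subvariety into a variety of minimal degree, giving $\len(X) = \len(X')$ and $\py(X) = \py(X')$; since $X'$ has minimal degree, $\len(X') = \codim(X') = km - 1$ and, by Corollary~\ref{c:mindeg}, $\py(X') = 1 + \dim(X') = 1 + m$, which finishes the proof. The main obstacle is the inductive step: one must simultaneously engineer the vertex $v$ so that the inclusion $X_Q \subseteq X_{Q'}$ is genuinely transverse to general inner projections, and carry out the Minkowski-sum count leading to $s \leqslant m - 1 - D$, which is the estimate that forces the hypothesis on the height $k$ of the prism.
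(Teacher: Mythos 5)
Your proposal follows the paper's own proof essentially step for step: the same lower bound via the Lawrence--prism scroll from Corollary~\ref{c:lines} together with Lemma~\ref{l:qpBasic} and Theorem~\ref{t:maxqp}, the same induction on $\abs{P \cap \mathbb{Z}^d}$ with the simplex base case, the same reduction of the inductive step to projecting down the edge over a vertex $v$ and comparing the resulting variety to a pyramid cone via Proposition~\ref{p:projEq}, Theorem~\ref{t:trans} and Remark~\ref{r:qp=}, the same semigroup count of the quadratic residual, and the same final appeal to Proposition~\ref{p:qpEq}. The only deviations are minor: you place the pyramid apex directly over $(v,0)$ rather than shearing from an adjacent lattice point $\mathbf{w}$ as the paper does, your estimate $s \leqslant m - 1 - \dim(P)$ obtained from the edges through $v$ is slightly sharper than the paper's bound $\abs{\mathscr{A}} \geqslant \dim(P) - 1$ obtained from a facet of $Q$, and your assertion that a vertex $v$ preserving both the dimension and the affine lattice generated by the remaining lattice points always exists when $m \geqslant \dim(P) + 2$ is left unproved (it is the analogue of the birationality hypothesis of Proposition~\ref{p:projEq} that the paper also checks only implicitly).
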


\begin{proof}
  We proceed by induction on $\abs{P \cap \smash{\mathbb{Z}^d}}$.  For the
  base case, it suffices to consider a standard simplex. If
  $\mathbf{e}_1, \mathbf{e}_2, \dotsc, \mathbf{e}_d$ denotes the standard
  basis for $\mathbb{R}^d$, then we have
  $P = \conv\{ \mathbf{0}, \mathbf{e}_1, \mathbf{e}_2, \dotsc, \mathbf{e}_d
  \}$.  For any positive integer $k$, the corresponding toric subvariety
  $X_{(P \times [0,k]) \cap \smash{\mathbb{Z}^{d+1}}}$ is the Segre embedding
  of the product $\mathbb{P}^d \times \nu_{k}(\mathbb{P}^1)$ in
  $\mathbb{P}^{kd+d+k}$, where the factor
  $\nu_{k}(\mathbb{P}^1) \subset \mathbb{P}^k$ is the rational normal curve of
  degree $k$. This variety is itself a rational normal scroll, so
  Theorem~\ref{t:maxqp} establishes that
  \[
    \qp(X_{(P \times [0,k]) \cap \smash{\mathbb{Z}^{d+1}}}) = \codim(X_{(P
      \times [0,k]) \cap \smash{\mathbb{Z}^{d+1}}}) = (kd+d+k) - (d+1) = k \,
    \abs{P \cap \smash{\mathbb{Z}^d}} - 1 \, .
  \]

  Now, suppose that $P \subset \mathbb{R}^d$ is an arbitrary lattice polytope
  and assume that the positive integer $k$ satisfies
  $k \geqslant \frac{1}{\dim(P)-1} \abs{P \cap \smash{\mathbb{Z}^d}} - 1$.
  Corollary~\ref{c:lines} shows that the embedded projective toric variety
  $X_{(P \times [0,k]) \cap \smash{\mathbb{Z}^{d+1}}}$ is contained in a
  rational normal scroll $X_{\smash{P'} \cap \smash{\mathbb{Z}^{m}}}$ whose
  dimension $m \coloneqq \abs{P \cap \mathbb{Z}^d}$ is equal to the number of
  parallel lines needed to cover all of the lattice points in the prism
  $P \times [0,k]$.  Hence, Part~\emph{ii} of Lemma~\ref{l:qpBasic} and
  Theorem~\ref{t:maxqp} give the lower bound
  \begin{align*}
    \qp(X_{(P \times [0,k]) \cap \smash{\mathbb{Z}^d}})
    \geqslant \qp(X_{\smash{P' \cap \mathbb{Z}^m}})
    &= \codim(X_{\smash{P' \cap \mathbb{Z}^m}}) \\
    &= \textstyle \bigl( (k+1) \, \abs{P \cap \mathbb{Z}^d} -1 \bigr) - \abs{P
      \cap \mathbb{Z}^d} = k \, \abs{P \cap \mathbb{Z}^d} - 1 \, .
  \end{align*}
  To prove the complementary upper bound, choose a vertex $\mathbf{v} \in P$.
  Set $Q \coloneqq \conv\{ (P \cap \mathbb{Z}^d) \setminus \mathbf{v} \}$, so
  $\dim(P)-1 \leqslant \dim(Q) \leqslant \dim(P)$.  Since
  $\abs{Q \cap \smash{\mathbb{Z}^d}} < \abs{P \cap \smash{\mathbb{Z}^d}}$, the
  induction hypothesis establishes that
  $\qp(X_{(Q \times [0,k]) \cap \smash{\mathbb{Z}^{d+1}}}) = k \, \abs{Q \cap
    \mathbb{Z}^d} - 1$.  We relate this quantity to
  $\qp(X_{(P \times [0,k]) \cap \smash{\mathbb{Z}^{d+1}}})$ via the following
  sequence of inner projections: for $i$, decreasing by $1$ from $k$ to $1$,
  project away from the torus-invariant point corresponding to the lattice
  point $(\mathbf{v}, i)$.  We are moving down the edge of the prism
  $P \times [0,k]$ lying over the vertex $\mathbf{v}$.  The final embedded
  projective toric variety corresponds to
  $Q' \coloneqq \conv\bigl\{ (Q \times [0,k]) \cup \{ (\mathbf{v},0) \}
  \bigr\}$.  We claim that
  $\qp(X_{\smash{Q' \cap \mathbb{Z}^{d+1}}}) = \qp(X_{(Q \times [0,k]) \cap
    \smash{\mathbb{Z}^{d+1}}})$.  This claim together with Part~\emph{v} of
  Lemma~\ref{l:qpGen} would give
  \[
    \qp(X_{(P \times [0,k]) \cap \smash{\mathbb{Z}^{d+1}}}) \leqslant k +
    \qp(X_{\smash{Q' \cap \mathbb{Z}^{d+1}}}) = \textstyle k + k \, \abs{Q
      \cap \mathbb{Z}^d} - 1 = k \, \abs{P \cap \mathbb{Z}^d} - 1
  \]
  as required.  Thus, it only remains to prove the claim.

  To accomplish this, choose a lattice point
  $\mathbf{w} \in P \cap \mathbb{Z}^d$ adjacent to the vertex
  $\mathbf{v} \in P$ such that the primitive vector $\mathbf{v} - \mathbf{w}$
  is parallel to an edge of the polytope $P$ passing through $\mathbf{v}$.
  Consider the pyramid
  $Q'' \coloneqq \conv\bigl\{ (Q \times [0,k] \times 0) \cup \{
  (\mathbf{w},0,1) \} \bigr\} \subset \mathbb{R}^d \times \mathbb{R} \times
  \mathbb{R}$ and the linear projection
  $\theta \colon \mathbb{R}^d \times \mathbb{R} \times \mathbb{R} \to
  \mathbb{R}^{d} \times \mathbb{R}$ defined by
  $(\mathbf{u},y,z) \mapsto (\mathbf{u},y) + z (\mathbf{v} - \mathbf{w}, 0)$.
  By design, the map $\theta$ induces a bijection between the lattice points
  in $Q''$ and $Q'$, so the associated toric varieties are nested in the same
  ambient projective space.  Since the lattice polytope $Q''$ is a pyramid,
  the embedded projective toric variety
  $X_{\smash{Q'' \cap \mathbb{Z}^{d+2}}}$ is a cone and
  Proposition~\ref{p:projEq} shows that the subvariety
  $X_{\smash{Q' \cap \mathbb{Z}^{d+1}}} \subset X_{\smash{Q'' \cap
      \mathbb{Z}^{d+2}}}$ is transverse to general inner projections. Applying
  Part~\emph{ii} of Lemma~\ref{l:qpBasic} and Theorem~\ref{t:trans}, we obtain
  $\qp(X_{\smash{Q'' \cap \mathbb{Z}^{d+1}}}) \leqslant \qp(X_{\smash{Q' \cap
      \mathbb{Z}^{d+1}}}) \leqslant \max\{ \qp(X_{\smash{Q'' \cap
      \mathbb{Z}^{d+2}}}), \qr(X_{\smash{Q' \cap \mathbb{Z}^{d+1}}},
  X_{\smash{Q'' \cap \mathbb{Z}^{d+2}}}) \}$.  Regarding the homogeneous
  coordinates rings of these embedded projective toric varieties as semigroup
  algebras (see Theorem~1.1.7 in \cite{CLS}), we have
  \[
    \qr(X_{\smash{Q' \cap \mathbb{Z}^{d+1}}}, X_{\smash{Q'' \cap
        \mathbb{Z}^{d+2}}}) = \textstyle \abs{Q'' \cap \mathbb{Z}^{d+2} + Q''
      \cap \mathbb{Z}^{d+2}} - \abs{Q' \cap \mathbb{Z}^{d+1} + Q' \cap
      \mathbb{Z}^{d+1}} \, .
  \]
  Partitioning via the last coordinate, we deduce that
  \[
    \textstyle \abs{Q'' \cap \mathbb{Z}^{d+2} + Q'' \cap \mathbb{Z}^{d+2}} =
    \textstyle \abs{(Q \times [0,k]) \cap \mathbb{Z}^{d+1} + (Q \times
      [0,k]) \cap \mathbb{Z}^{d+1}} + \textstyle \abs{(Q \times [0,k])
      \cap \mathbb{Z}^{d+1}} + 1 \, .
  \]
  Set
  $\mathscr{A} \coloneqq \left\{ \mathbf{u} \in Q \cap \mathbb{Z}^d \;
    \middle| \; \mathbf{u} + \mathbf{v} \not\in Q \right\}$.  For all
  $\mathbf{u} \in Q \cap \mathbb{Z}^d$ and all $i \in \mathbb{Z}$ satisfying
  $0 \leqslant i \leqslant k$, the condition
  $(\mathbf{u},i) + (\mathbf{v},0) \not\in Q' \cap \mathbb{Z}^{d+1} + Q' \cap
  \mathbb{Z}^{d+1}$ implies that $\mathbf{u} + \mathbf{v} \not\in Q$, so a
  similar partition gives
  \[
    \textstyle \abs{Q' \cap \mathbb{Z}^{d+1} + Q' \cap \mathbb{Z}^{d+1}} =
    \textstyle \abs{(Q \times [0,k]) \cap \mathbb{Z}^{d+1} + (Q \times
      [0,k]) \cap \mathbb{Z}^{d+1}} + (k+1) \, \abs{\mathscr{A}} + 1 \, .
  \]
  It follows that
  $\qr(X_{\smash{Q' \cap \mathbb{Z}^{d+1}}}, X_{\smash{Q'' \cap
      \mathbb{Z}^{d+2}}}) = (k+1) \bigl( \abs{Q \cap \mathbb{Z}^d} -
  \abs{\mathscr{A}} \bigr)$. Since $Q''$ is a pyramid over the prism
  $Q \times [0,k]$, we also have
  $\qp(X_{\smash{Q'' \cap \mathbb{Z}^{d+2}}}) = \qp(X_{(Q \times [0,k]) \cap
    \smash{\mathbb{Z}^{d+1}}}) = k \, \abs{Q \cap \mathbb{Z}^d} - 1$.  As
  advertised in Remark~\ref{r:qp=}, the additional inequality
  $\qr(X_{\smash{Q' \cap \mathbb{Z}^{d+1}}}, X_{\smash{Q'' \cap
      \mathbb{Z}^{d+2}}}) \leqslant \qp(X_{\smash{Q'' \cap
      \mathbb{Z}^{d+2}}})$ would give the equality
  $\qp(X_{\smash{Q' \cap \mathbb{Z}^{d+1}}}) = \qp(X_{\smash{Q'' \cap
      \mathbb{Z}^{d+2}}})$ and, thereby, prove the claim.  This additional
  inequality is equivalent to
  $\abs{P \cap \mathbb{Z}^d} = \abs{Q \cap \mathbb{Z}^d} + 1 \leqslant (k+1)
  \abs{\mathscr{A}}$.  To estimate the cardinality of $\mathscr{A}$, consider
  a facet $F \subset Q$ that is not a facet of $P$.  For each lattice point
  $\mathbf{u} \in F \cap \mathbb{Z}^d$, we have
  $\mathbf{u} + \mathbf{v} \not\in Q$, so
  $\abs{\mathscr{A}} \geqslant \abs{F \cap \mathbb{Z}^d}$.  Because $F$ is a
  lattice polytope of dimension $\dim(Q) - 1 \geqslant \dim(P) - 1$, we infer
  that $\abs{\mathscr{A}} \geqslant \dim(P) -1$.  Therefore, the hypothesis
  that $k \geqslant \frac{1}{\dim(P)-1} \abs{P \cap \mathbb{Z}^d} - 1$
  guarantees that additional inequality holds.  Finally, using the rational
  normal scroll $X_{\smash{P'} \cap \smash{\mathbb{Z}^{m}}}$,
  Proposition~\ref{p:qpEq} proves that
  $\py(X_{(P \times [0,k]) \cap \smash{\mathbb{Z}^{d+1}}}) = 1 + \abs{P \cap
    \mathbb{Z}^d}$ and
  $\len(X_{(P \times [0,k]) \cap \smash{\mathbb{Z}^{d+1}}}) = k \, \abs{P \cap
    \mathbb{Z}^d} -1$.
\end{proof}

We draw attention to an application of Proposition~\ref{p:prism} in which the
hypothesis on $k$ is vacuous.

\begin{example}[Special Segre--Veronese embeddings of
  $\mathbb{P}^d \times \mathbb{P}^1 \times \mathbb{P}^1$]
  \label{e:segVer}
  Fix three positive integer $d, j, k \in \mathbb{N}$ with $k \geqslant j$,
  let $\mathbf{e}_1, \mathbf{e}_2, \dotsc, \mathbf{e}_d$ denote the standard
  basis for $\mathbb{R}^d$, and consider the lattice polytope
  $P \coloneqq \conv\{ \mathbf{0}, \mathbf{e}_1, \mathbf{e}_2, \dotsc,
  \mathbf{e}_d \} \times [0,j]$.  The corresponding toric variety
  $X_{P \times [0,k]}$ is the Segre embedding of the triple product
  $\mathbb{P}^d \times \nu_{\!j}(\mathbb{P}^1) \times \nu_{k}(\mathbb{P}^1)$
  into $\mathbb{P}^{(d+1)(j+1)(k+1) -1}$, so Proposition~\ref{p:prism} gives
  $\qp(X_{P \times [0,k]}) = k(d+1)(j+1) - 1 = \len(X_{P \times [0,k]})$ and
  $\py(X_{P \times [0,k]}) = (d+1)(j+1) + 1$.
\end{example}

\subsection*{Acknowledgements}

We thank Roser Homs Pons, Michael Kemeny, Alessandro Oneto, Simon Telen, and
an anonymous referee for their suggestions.  Grigoriy Blekherman was partially
supported by NSF grant DMS-1352073, Gregory G.~Smith was partially supported
by NSERC and Knut \& Alice Wallenberg Foundation, and Mauricio Velasco was
partially supported by the FAPA funds from Universidad de los Andes.

\begin{bibdiv}
  \begin{biblist}

    \bib{AN}{book}{
      author={Aprodu, Marian},
      author={Nagel, Jan},
      title={Koszul cohomology and algebraic geometry},
      series={University Lecture Series},
      volume={52},
      publisher={American Mathematical Society, Providence, RI},
      date={2010},
      pages={viii+125},
    }

    \bib{Bar}{book}{
      author={Barvinok, Alexander},
      title={A course in convexity},
      series={Graduate Studies in Mathematics},
      volume={54},
      publisher={American Mathematical Society, Providence, RI},
      date={2002},
      pages={x+366},
    }

    \bib{BIJV}{article}{
      label={BIJV15},
      author={Blekherman, Grigoriy},
      author={Iliman, Sadik},
      author={Juhnke-Kubitzke, Martina},
      author={Velasco, Mauricio},
      title={Gap vectors of real projective varieties},
      journal={Adv. Math.},
      volume={283},
      date={2015},
      pages={458--472},
    }

    \bib{BIK}{article}{
      author={Blekherman, Grigoriy},
      author={Iliman, Sadik},
      author={Kubitzke, Martina},
      title={Dimensional differences between faces of the cones of nonnegative
        polynomials and sums of squares},
      journal={Int. Math. Res. Not. IMRN},
      date={2015},
      number={18},
      pages={8437--8470},
    }
    
    \bib{BPSV}{article}{
      author={Blekherman, Grigoriy},
      author={Plaumann, Daniel},
      author={Sinn, Rainer},
      author={Vinzant, Cynthia},
      title={Low-rank sum-of-squares representations on varieties of minimal
        degree},
      journal={Int. Math. Res. Not. IMRN},
      date={2017},
      number={00},
      pages={1--22},
    }

    \bib{BSV17}{article}{
      author={Blekherman, Grigoriy},
      author={Sinn, Rainer},
      author={Velasco, Mauricio},
      title={Do sums of squares dream of free resolutions?},
      journal={SIAM J. Appl. Algebra Geom.},
      volume={1},
      date={2017},
      number={1},
      pages={175--199},
    }

    \bib{BSV16}{article}{
      author={Blekherman, Grigoriy},
      author={Smith, Gregory G.},
      author={Velasco, Mauricio},
      title={Sums of squares and varieties of minimal degree},
      journal={J. Amer. Math. Soc.},
      volume={29},
      date={2016},
      number={3},
      pages={893--913},
    }

    \bib{BVB0}{article}{
      author={Boumal, Nicolas},
      author={Voroninski, Vlad},
      author={Bandeira, Afonso},
      title={The non-convex Burer–Monteiro approach works on smooth
        semidefinite programs},
      conference={
        title={30th Conference on Neural Information Processing Systems},
      },
      book={
        volume={29},
        publisher = {Curran Associates, Inc.},
      },
      pages={2757--2765},      
      date={2016}
    }    

    \bib{BVB}{article}{
      author={Boumal, Nicolas},
      author={Voroninski, Vlad},
      author={Bandeira, Afonso},
      title={Deterministic guarantees for Burer-Monteiro factorizations of
        smooth semidefinite programs},
      journal={Comm. Pure Appl. Math.},
      volume={73},
      number={3},
      date={2020},
      pages={581--608},
    }
    
    \bib{BM}{article}{
      author={Burer, Samuel},
      author={Monteiro, Renato D.C.},
      title={A nonlinear programming algorithm for solving semidefinite
        programs via low-rank factorization},
      journal={Math. Program.},
      volume={95},
      date={2003},
      number={2, Ser. B},
      pages={329--357},
    }

    \bib{CCDL}{article}{
      author={Castryck, Wouter},      
      author={Cools, Filip},
      author={Demeyer, Jeroen},
      author={Lemmens, Alexander},      
      title={Computing graded Betti tables of toric surfaces},
      journal={Trans. Amer. Math. Soc.},
      volume={372},
      date={2019},
      number={10},
      pages={6869--6903},
    }

    \bib{CLR}{article}{
      author={Choi, Man Duen},
      author={Lam, Tsit Yuen},
      author={Reznick, Bruce},
      title={Sums of squares of real polynomials},
      conference={
        title={$K$-theory and algebraic geometry: connections with quadratic
          forms and division algebras},
        address={Santa Barbara, CA},
        date={1992},
      },
      book={
        series={Proc. Sympos. Pure Math.},
        volume={58},
        publisher={Amer. Math. Soc., Providence, RI},
      },
      date={1995},
      pages={103--126},
    }

    \bib{CPSV}{article}{
      author={Chua, Lynn},
      author={Plaumann, Daniel},
      author={Sinn, Rainer},
      author={Vinzant, Cynthia},
      title={Gram spectrahedra},
      conference={
        title={Ordered algebraic structures and related topics},
      },
      book={
        series={Contemp. Math.~697},
        publisher={Amer. Math. Soc., Providence, RI},
      },
      date={2017},
      pages={81--105},
    }
    
    \bib{CLO}{book}{
      author={Cox, David A.},
      author={Little, John},
      author={O'Shea, Donal},
      title={Ideals, varieties, and algorithms},
      series={Undergraduate Texts in Mathematics},
      edition={4},
      publisher={Springer, Cham},
      date={2015},
      pages={xvi+646},
    }
    
    \bib{CLS}{book}{
      author={Cox, David A.},
      author={Little, John B.},
      author={Schenck, Henry K.},
      title={Toric varieties},
      series={Graduate Studies in Mathematics},
      volume={124},
      publisher={American Mathematical Society, Providence, RI},
      date={2011},
      pages={xxiv+841},
    }
    
    \bib{Die}{book}{
      author={Diestel, Reinhard},
      title={Graph theory},
      series={Graduate Texts in Mathematics},
      volume={173},
      edition={5},
      publisher={Springer, Berlin},
      date={2017},
      pages={xviii+428},
    }

    \bib{EL}{article}{
      author={Ein, Lawrence},
      author={Lazarsfeld, Robert},
      title={The gonality conjecture on syzygies of algebraic curves of large
        degree},
      journal={Publ. Math. Inst. Hautes \'{E}tudes Sci.},
      volume={122},
      date={2015},
      pages={301--313},
    }
    
    \bib{Eis}{book}{
      author={Eisenbud, David},
      title={The geometry of syzygies},
      series={Graduate Texts in Mathematics},
      volume={229},
      publisher={Springer-Verlag, New York},
      date={2005},
      pages={xvi+243},
    }
    
    \bib{EGHP5}{article}{
      author={Eisenbud, David},
      author={Green, Mark},
      author={Hulek, Klaus},
      author={Popescu, Sorin},
      title={Restricting linear syzygies: algebra and geometry},
      journal={Compos. Math.},
      volume={141},
      date={2005},
      number={6},
      pages={1460--1478},
    }
    
    \bib{EGHP6}{article}{
      author={Eisenbud, David},
      author={Green, Mark},
      author={Hulek, Klaus},
      author={Popescu, Sorin},
      title={Small schemes and varieties of minimal degree},
      journal={Amer. J. Math.},
      volume={128},
      date={2006},
      number={6},
      pages={1363--1389},
    }
    
    \bib{EH}{article}{
      author={Eisenbud, David},
      author={Harris, Joe},
      title={On varieties of minimal degree (a centennial account)},
      conference={
        title={Algebraic geometry, Bowdoin, 1985},
        address={Brunswick, Maine},
        date={1985},
      },
      book={
        series={Proc. Sympos. Pure Math.},
        volume={46},
        publisher={Amer. Math. Soc., Providence, RI},
      },
      date={1987},
      pages={3--13},
    }


    \bib{Gre}{article}{
      author={Green, Mark L.},
      title={Koszul cohomology and the geometry of projective varieties},
      journal={J. Differential Geom.},
      volume={19},
      date={1984},
      number={1},
      pages={125--171},
    }

    \bib{EGAIV}{article}{
      author={Grothendieck, Alexander},
      title={\'{E}l\'{e}ments de g\'{e}om\'{e}trie alg\'{e}brique IV.
        \'{E}tude locale des sch\'{e}mas et des morphismes de sch\'{e}mas,
        Troisie\'eme partie},
      journal={Inst. Hautes \'{E}tudes Sci. Publ. Math.},
      number={28},
      date={1966},
      pages={5--255},
    }
    
    \bib{HK12}{article}{
      author={Han, Kangjin},
      author={Kwak, Sijong},
      title={Analysis on some infinite modules, inner projection, and
        applications},
      journal={Trans. Amer. Math. Soc.},
      volume={364},
      date={2012},
      number={11},
      pages={5791--5812},
    }

    \bib{HK15}{article}{
      author={Han, Kangjin},
      author={Kwak, Sijong},
      title={Sharp bounds for higher linear syzygies and classifications of
        projective varieties},
      journal={Math. Ann.},
      volume={361},
      date={2015},
      number={1-2},
      pages={535--561},
    }

    \bib{Har}{book}{
      author={Harris, Joe},
      title={Algebraic geometry},
      series={Graduate Texts in Mathematics},
      volume={133},
      publisher={Springer-Verlag, New York},
      date={1995},
      pages={xx+328},
    }    
    
    \bib{HW}{article}{
      author={Harvey, Daniel J.},
      author={Wood, David R.},
      title={Treewidth of the Kneser graph and the Erd\H os-Ko-Rado theorem},
      journal={Electron. J. Combin.},
      volume={21},
      date={2014},
      number={1},
      pages={Paper 1.48, 11},
    }
    
    \bib{HH}{book}{
      author={Herzog, J\"urgen},
      author={Hibi, Takayuki},
      title={Monomial ideals},
      series={Graduate Texts in Mathematics},
      volume={260},
      publisher={Springer-Verlag London, Ltd., London},
      date={2011},
      pages={xvi+305},
    }
    
    \bib{Hil}{article}{
      author={Hilbert, David},
      title={Ueber die Darstellung definiter Formen als Summe von
        Formenquadraten},
      journal={Math. Ann.},
      volume={32},
      date={1888},
      number={3},
      pages={342--350},
    }

    \bib{IK}{book}{
      author={Iarrobino, Anthony},
      author={Kanev, Vassil},
      title={Power sums, Gorenstein algebras, and determinantal loci},
      series={Lecture Notes in Mathematics},
      volume={1721},
      publisher={Springer-Verlag, Berlin},
      date={1999},
      pages={xxxii+345},
    }

    \bib{Lam}{book}{
      author={Lam, Tsit Yuen},
      title={Introduction to quadratic forms over fields},
      series={Graduate Studies in Mathematics},
      volume={67},
      publisher={American Mathematical Society, Providence, RI},
      date={2005},
      pages={xxii+550},
    }

    \bib{LM}{article}{
      author={Landsberg, Joseph M.},
      author={Manivel, Laurent},
      title={On the projective geometry of rational homogeneous varieties},
      journal={Comment. Math. Helv.},
      volume={78},
      date={2003},
      number={1},
      pages={65--100},
    }
    
    \bib{LV}{article}{
      author={Laurent, Monique},
      author={Varvitsiotis, Antonios},
      title={A new graph parameter related to bounded rank positive
        semidefinite matrix completions},
      journal={Math. Program.},
      volume={145},
      date={2014},
      number={1-2, Ser. A},
      pages={291--325},
    }

    \bib{Par}{article}{
      author={Park, Euisung},
      title={On hypersurfaces containing projective varieties},
      journal={Forum Math.},
      volume={27},
      date={2015},
      number={2},
      pages={843--875},
    }
    
    \bib{Rai}{article}{
      author={Raicu, Claudiu},
      title={Representation stability for syzygies of line bundles on
        Segre-Veronese varieties},
      journal={J. Eur. Math. Soc. (JEMS)},
      volume={18},
      date={2016},
      number={6},
      pages={1201--1231},
    }

    \bib{Sch17}{article}{
      author={Scheiderer, Claus},
      title={Sum of squares length of real forms},
      journal={Math. Z.},
      volume={286},
      date={2017},
      number={1-2},
      pages={559--570},
    }       
    
    \bib{Sch04}{article}{
      author={Schenck, Hal},
      title={Lattice polygons and Green's theorem},
      journal={Proc. Amer. Math. Soc.},
      volume={132},
      date={2004},
      number={12},
      pages={3509--3512},
    }

    \bib{Sch86}{article}{
      author={Schreyer, Frank-Olaf},
      title={Syzygies of canonical curves and special linear series},
      journal={Math. Ann.},
      volume={275},
      date={1986},
      number={1},
      pages={105--137},
    }    
    
    \bib{Voi}{article}{
      author={Voisin, Claire},
      title={Green's canonical syzygy conjecture for generic curves of odd
        genus},
      journal={Compos. Math.},
      volume={141},
      date={2005},
      number={5},
      pages={1163--1190},
    }

    \bib{Zak}{article}{
      author={Zak, Fyodor L.},
      title={Projective invariants of quadratic embeddings},
      journal={Math. Ann.},
      volume={313},
      date={1999},
      number={3},
      pages={507--545},
    }

  \end{biblist}
\end{bibdiv}

\raggedright

\end{document}